\title{On Lipschitz maps and the H\"older regularity of flows}
\date{}
   \author{Youness Boutaib}
   \thanks{Part of this research was carried while at the University of Oxford with the support of an Oxford-Man Institute for Quantitative Finance scholarship, for which the author is grateful. The author is also thankful for the support of the DFG through the research unit FOR2402 in Berlin and Potsdam where significant improvements on this work have been done.}
   \address{Institut f\"ur Mathematik, Technische Universit\"at Berlin, Str{\ss}e des 17. Juni 136, 10623 Berlin, Germany.}
   \address{Institut f\"ur Mathematik, Universit\"at Potsdam, Haus 9, Karl-Liebknecht-Stra{\ss}e 24, 14476 Potsdam, Germany. }
   \email{boutaib.youness@gmail.com}
\newtheorem{theo}{Theorem}[section]
\newtheorem{lemma}[theo]{Lemma}
\newtheorem*{example}{Example}
\newtheorem{Cor}[theo]{Corollary}
\newtheorem{Def}[theo]{Definition}
\newtheorem{Prop}[theo]{Proposition}
\newtheorem{Rem}[theo]{Remark}
\newtheorem*{Claim}{Claim}
\newtheorem*{Notation}{Notation}
\newtheorem*{examples}{Examples}
\begin{document}
\maketitle

	\begin{abstract} This paper regroups some of the basic properties of Lipschitz maps and their flows. Many of the results presented here are classical in the case of smooth maps. We prove them here in the Lipschitz case for a better understanding of the Lipschitz geometry and for a quantification of the related properties, which would be of use to the development of numerical methods for rough paths for example. We also introduce the notion of almost Lipschitz maps, which provide a sharper control and description of flows of Lipschitz vector fields and local inverses of Lipschitz injective immersions.\end{abstract} 

\section*{Basic notations}
\begin{tabular}{lll}
$\mathcal{S}_k$&:& The symmetric group of order $k$.\\
$\lfloor\gamma\rfloor$&:& The only integer such that $0<\gamma- \lfloor\gamma\rfloor\leq 1$, $\gamma$ being a real number.\\
$[\gamma]$&:& The integer part of a real number $\gamma$, i.e. the only integer such that\\
&& $0\leq\gamma-[\gamma]<1$.\\
$\mathcal{L}_c(E,F)$&:& The space of all continuous linear mappings from a normed vector \\
		&&space $E$ to a normed vector space $F$.\\
$E^{\otimes n}$&:& The space of homogenous tensors of the vector space $E$ of order $n$, \\
		&&$n\in\mathbb{N}^*$.\\		
$\mathcal{L}_s(E^{\otimes k},F)$&:& The space of symmetric $k$-linear mappings from a vector space $E$  \\
		&& to a vector space $F$.\\
$\mathrm{I}_k$&:& The identity matrix of rank $k$.\\
$\mathrm{Id}_U$&:& The identity map on the set $U$.\\
$\overline{A}$&:& The closure of a subset $A$ of a topological space.\\
$B(x,\alpha)$&:& The ball centered at $x$ of radius $\alpha$.\\
\end{tabular}

\section{Lipschitz maps}\label{sec:lip-maps}
	In \cite{Young}, L.C. Young uses the concept of $p$-variation $(p\geq1)$ as a means to characterise the smoothness of a path to generalize Stieltjes' integration theory to paths of finite $p$-variation. In building a theory of differential equations ($\mathrm{d}y=f(y)\mathrm{d}x$) using the aformentioned work, one needs to be able to control the smoothness (in terms of variation) of the image of a path of finite $p$-variation under the involved vector fields (i.e. $f(y)$). It appears that Lipschitz maps, introduced by Whitney (\cite{Whitney2}) and studied for example by Stein in \cite{Stein}, are the appropriate type of maps to use in this framework  and the wider one of rough paths, introduced by Lyons in \cite{Revista}. Lipschitz maps have the advantage (among others) of making sense even on discrete sets. They correspond to a variation of the H\"older modelled distributions in the classical polynomial regularity structure (more precisely, a H\"older modelled distribution in this regularity structure is a Lipschitz map on every compact set. For more details, see Hairer \cite{Hairer}). Regularity structures are used to solve a large class of sub-critical stochastic parabolic PDEs. The present work may constitute a simplified example (i.e. involving a ladder structure) of the type of algebraic constructions that one needs to build a closed regularity structure that can be used in solving more elaborate SPDEs. In this section, we give the definition of Lipschitz maps then set out to answer basic and natural questions about this class of maps: do they have a nice embedding structure? How can they be linked to the more familiar class of $\mathcal{C}^n$ maps? Are they stable under compostion? etc.

	\subsection{Basic definitions and properties, norms on tensor product spaces}\label{SectionDefinitionNorms}
	We first fix our definition of Lipschitz maps. To represent multi-linear maps such as higher derivatives, we opt for a representation by linear maps taking values in tensor product spaces as can be found for example in \cite{Flour}:
	\begin{Def}\label{DefLipMap} Let $n\in \mathbb{N}$ and $0< \varepsilon\leq 1$. Let $E$ and $F$ be two normed vector spaces and $U$ be a subset of $E$. Let $f^0:U\to E$ be a map. For every $k \in [\![1,n]\!]$, let $f^k:U\to \mathcal{L}_s(E^{\otimes k},F)$ be a map with values in the space of the symmetric $k$-linear mappings from $E$ to $F$. We will use, without ambiguity, the same notation $\|.\|$ to designate norms on $E^{\otimes k}$, for $k\in [\![1,n]\!]$, and the norm on $F$. \\
	For $k \in [\![0,n]\!]$, the map $R_k:E\times E\to \mathcal{L}(E^{\otimes k},F)$ defined by:
	\[\forall x,y \in U, \forall v \in E^{\otimes k}: f^k(x)(v)=\sum\limits_{j=k}^{n} f^j(y)(\frac{v\otimes (x-y)^{\otimes (j-k)}}{(j-k)!})+R_k(x,y)(v)\]
	is called the remainder of order $k$ associated to $f=(f^0,f^1,\ldots,f^n)$.\\	
	The collection $f=(f^0,f^1,\ldots,f^n)$ is said to be Lipschitz of degree $n+\varepsilon$ on $U$ (or in short a $\textrm{Lip}-(n+\varepsilon)$ map) if there exists a constant $M$ such that for all $k \in [\![0,n]\!]$, $x,y \in U$ and $v_1,\ldots,v_k \in E$:
	\begin{enumerate}
	\item $\|f^k(x)(v_1\otimes\cdots\otimes v_k)\| \leq M \|v_1\otimes\cdots\otimes v_k\| $;
	\item $\|R_k(x,y)(v_1\otimes\cdots\otimes v_k)\|\leq M \|x-y\|^{n+\varepsilon-k}\|v_1\otimes\cdots\otimes v_k\|$.
	\end{enumerate}
     The smallest constant $M$ for which the properties above hold is called the $\textrm{Lip}-(n+\varepsilon)$-norm of $f$ and is denoted by $\|f\|_{\textrm{Lip}-(n+\varepsilon)}$.
     \end{Def}
	Note that the above definition is purely quantitative and does not require any properties (in particular topological ones) from the domain of the definition of the Lipschitz map.
	\begin{Rem}
	On any non-empty open subset of $U$ (and in particular on the interior of $U$), $f^1,\ldots,f^n$ are the successive derivatives of $f^0$. However, these maps are not necessarily uniquely determined by $f^0$ on an arbitrary set $U$. Keeping this in mind, if $f^0:U\to F$ is a map such that there exist $f^1,\ldots,f^n$ such that $(f^0,f^1,\ldots,f^n)$ is $\textrm{Lip}-(n+\varepsilon)$, we will often say that $f^0$ is $\textrm{Lip}-(n+\varepsilon)$ with no mention of $f^1,\ldots,f^n$.
     \end{Rem}
     \begin{Rem} It is clear that the property of being Lipschitz is invariant under the change of norms by equivalent ones.
     \end{Rem}
	One important feature one has to pay attention to when deriving properties of Lipschitz maps is the nature of the norms on the tensor spaces. We study here three types of norms which will be of use in the exposition of our work.
	\begin{Def}[Projective property] Let $E$ be a normed vector space. Let $n\in \mathbb{N}^*$. We say that $(E^{\otimes k})_{1\leq k \leq n}$ (respectively $(E^{\otimes k})_{k \geq 1}$) are endowed with norms satisfying the projective property if, for every $k\in [\![1,n]\!]$ (resp. $k\geq1$) and $p,q \in \mathbb{N}$ such that $p+q=k$ and every $a\in E^{\otimes p}, b\in E^{\otimes q}$, we have $\|a\otimes b\|\leq\|a\|\|b\|$.
	\end{Def}
	When at least such a family of norms exists, norms satisfying the projective property are abundant in the following sense:
	\begin{Prop}\label{ChangeProjectiveNorm}  Let $E$ be a normed vector space and $n\in \mathbb{N}^*$. Suppose $(\|.\|_k)_{1\leq k \leq n}$ are norms on $(E^{\otimes k})_{1\leq k \leq n}$ satisfying the projective property, then, for $\alpha>0$ and $\beta\geq1$, the norms $(\alpha^k\|.\|_k)_{1\leq k \leq n}$ and $(\beta\|.\|_k)_{1\leq k \leq n}$ also satisfy the projective property.
	\end{Prop}
	\begin{example} Let $E$ be a finite dimensional vector space and let $(\vec{e}_1,\ldots,\vec{e}_r)$ be a basis for $E$. Let $n\in\mathbb{N}^*$. Let $k\in[\![1,n]\!]$ and $p\geq1$ and define the norms $\|.\|_{p,k}$ and $\|.\|_{\infty,k}$ on $E^{\otimes k}$ by the following: for $x\in E^{\otimes k}$, if $(x_{i_1,\ldots,i_k})_{1\leq i_1,\ldots,i_k\leq r}$ are  the coordinates of $x$ in the basis $(\vec{e}_{i_1}\otimes\cdots\otimes \vec{e}_{i_k})_{1\leq i_1,\ldots,i_k\leq r}$ of $E^{\otimes k}$, i.e.:
	\[x=\sum\limits_{1\leq i_1,\ldots,i_k\leq r}x_{i_1,\ldots,i_k}\vec{e}_{i_1}\otimes\cdots\otimes \vec{e}_{i_k}\]
	then:
	\[\|x\|_{p,k}= \left(\sum\limits_{1\leq i_1,\ldots,i_k\leq r}|x_{i_1,\ldots,i_k}|^p\right)^{1/p}\quad\textrm{and}\quad\|x\|_{\infty,k}=\max_{1\leq i_1,\ldots,i_k\leq r} |x_{i_1,\ldots,i_k}|\]
	Then $(\|.\|_{p,k})_{1\leq k \leq n}$ and $(\|.\|_{\infty,k})_{1\leq k \leq n}$ are norms on $(E^{\otimes k})_{1\leq k \leq n}$ satisfying the projective property.\end{example}
	\begin{Def} Let $E$ and $F$ be two normed vector spaces and $u:E\to F$ be a linear map. Let $n\in\mathbb{N}^*$. We define the map $u^{\otimes n}:E^{\otimes n}\to F^{\otimes n}$ as the unique linear map satisfying:
	\[\forall v_1,\ldots,v_n\in E:\quad u^{\otimes n}(v_1\otimes\cdots\otimes v_n)=u(v_1)\otimes\cdots\otimes u(v_n)\]
	\end{Def}
	\begin{Rem} The existence of such a map is a consequence of the universal property defining tensor product spaces.
	\end{Rem}
	\begin{Def}[Compatible norms] Let $E$ and $F$ be two normed vector spaces. Let $n\in\mathbb{N}^*$ and $C\geq0$. We say that $(E^{\otimes k})_{1 \leq k\leq n}$ and $(F^{\otimes k})_{1 \leq k\leq n}$ are endowed with $C$-compatible norms if, for every bounded linear map $u:E\to F$ and every $k\in [\![1,n]\!]$, we have $\|u^{\otimes k}\|\leq C\|u\|^k$. When the value of $C$ is irrelevant, we may simply say that the norms are compatible and assume that $C=1$.
	\end{Def}
	\begin{examples} Let $E$ be a finite dimensional vector space and let $(\vec{e}_1,\ldots,\vec{e}_r)$ be a basis for $E$. Let $n\in\mathbb{N}^*$. Let $F$ be a normed vector space. We assume that we have norms on $(F^{\otimes k})_{1\leq k \leq n}$ satisfying the projective property. Then:\begin{itemize}
	\item Let $p\geq1$. The norms $(\|.\|_{p,k})_{1\leq k \leq n}$ (resp. $(r^{k(1-1/p)}\|.\|_{p,k})_{1\leq k \leq n}$) on $(E^{\otimes k})_{1\leq k \leq n}$ are $r^{n(1-1/p)}$-compatible (resp. $1$-compatible) with the norms on $(F^{\otimes k})_{1\leq k \leq n}$.
	\item The norms $(\|.\|_{\infty,k})_{1\leq k \leq n}$ (resp. $(r^k\|.\|_{\infty,k})_{1\leq k \leq n}$) on $(E^{\otimes k})_{1\leq k \leq n}$ are $r^n$-compatible (resp. $1$-compatible) with the norms on $(F^{\otimes k})_{1\leq k \leq n}$.
	\end{itemize}	\end{examples}
	\begin{Rem} As shown in the case of the norms given in the previous examples and by proposition \ref{ChangeProjectiveNorm}, if the norms on $(E^{\otimes k})_{1 \leq k\leq n}$ and $(F^{\otimes k})_{1 \leq k\leq n}$ (or $(E^{\otimes k})_{k\geq 1}$ and $(F^{\otimes k})_{k\geq 1}$) are $C$-compatible, it is always possible to define new norms that are equivalent to the original norms so that the new norms are $1$-compatible and that the new norms on $(E^{\otimes k})_{1\leq k \leq n}$ satisfy the projective property if the original ones do.
	\end{Rem}
	\begin{Def}[Action of the Symmetric Group on Tensors]
	Let $n \in \mathbb{N}^*$, $\sigma \in \mathcal{S}_n$ and $E$ be a vector space. We define the action of $\sigma$ on the homogenous tensors of $E$ of order $n$ as a linear map by the following:
	\[\forall x_1,x_2,\ldots, x_n \in E \quad \sigma(x_1\otimes x_2\otimes \cdots \otimes x_n)=x_{\sigma(1)}\otimes x_{\sigma(2)}\otimes \cdots \otimes x_{\sigma(n)}\]
	\end{Def}
	\begin{Notation} In the context of the previous definition, for $\sigma\in\mathcal{S}_{n}$, $i\in[\![0,n]\!]$ and $x_1,x_2,\ldots, x_n \in E$, we define $\sigma^{1,i}(x)$ ($x:=x_1\otimes x_2\otimes\cdots\otimes x_n$) and $\sigma^{2,i}(x)$ to be the only elements of $E^{\otimes i}$ and $E^{\otimes (n-i)}$ respectively such that:
	\[\sigma(x)=\sigma^{1,i}(x)\otimes\sigma^{2,i}(x)\]
	\end{Notation} 
    \begin{Def}[Symmetric norms]
    Let $E$ be a vector space and $n\in\mathbb{N}^*$. A norm on $E^{\otimes n}$ is said to be symmetric if:
	\[\forall n \in \mathbb{N}^*,\,\forall \sigma \in \mathcal{S}_n,\, \forall x \in E^{\otimes n} \quad \| \sigma (x)\|=\|x\|\]
    \end{Def}
	We show now how to control the Lipschitz norm of the Cartesian product of two Lipschitz maps.
	\begin{Prop}\label{ProductLipFunc} Let $\gamma>0$. Let $E$, $F$ and $G$ be normed vector spaces. Let $U$ be a subset of $E$ and let $f$ (resp. $g$) be a map defined on $U$ with values in $F$ (resp. $G$). Let $h$ be the map defined on $U$ by $h=(f,g)$. Then:\begin{itemize}
	\item If $f$ and $g$ are $\textrm{Lip}-\gamma$ and $F\times G$ is endowed with the $l^p$ norm ($p\in [1,\infty]$), then $h$ is also $\textrm{Lip}-\gamma$ and $\|h\|_{\textrm{Lip}-\gamma}$ is less than or equal to the $l^p$ norm of $(\|f\|_{\textrm{Lip}-\gamma},\|g\|_{\textrm{Lip}-\gamma})$.
	\item If the norm $\|.\|_F$ on $F$ and the norm $\|.\|_{F\times G}$ on $F\times G$ are such that there exists $C>0$ satisfying:
	\[\forall (x,y)\in F\times G:\quad \|x\|_F\leq C \|(x,y)\|_{F\times G} \]
	(note that the $l^p$ norms on $F\times G$ satisfy this property, for $p\in [1,\infty]$), and if $h$ is $\textrm{Lip}-\gamma$ then $f$ is $\textrm{Lip}-\gamma$ and $\|f\|_{\textrm{Lip}-\gamma}\leq C\|h\|_{\textrm{Lip}-\gamma}$.
	\end{itemize}\end{Prop}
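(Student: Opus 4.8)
The plan is to handle the two directions by constructing, in each case, an explicit candidate for the Lipschitz data and verifying the three defining inequalities of a $\textrm{Lip}-\gamma$ map term by term. Write $n=\lfloor\gamma\rfloor$, so that a $\textrm{Lip}-\gamma$ map is a collection indexed by $0\le k\le n$ and $\gamma-k=n+\varepsilon-k$ is the exponent appearing in the remainder bounds. The one genuinely quantitative input is an elementary operator-norm computation: if $T\in\mathcal{L}(E^{\otimes k},F)$, $S\in\mathcal{L}(E^{\otimes k},G)$, and $\phi$ is one of the norms $\ell^1,\ell^2,\ell^\infty$ on $\mathbb{R}^2$ — so that $\phi$ is positively homogeneous and non-decreasing in each of its non-negative arguments — then $(T,S)\colon v\mapsto(Tv,Sv)$ is again linear and symmetric, and for $\|v\|\le1$ one has $\|(Tv,Sv)\|_{F\times G}=\phi(\|Tv\|,\|Sv\|)\le\phi(\|T\|,\|S\|)$; taking the supremum over such $v$ gives $\|(T,S)\|\le\phi(\|T\|,\|S\|)$.

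For the first bullet, let $f=(f^0,\ldots,f^n)$ and $g=(g^0,\ldots,g^n)$ be the given data with associated remainders $R^f_k,R^g_k$. I would set $h^k(x):=(f^k(x),g^k(x))$ and $R^h_k(x,y):=(R^f_k(x,y),R^g_k(x,y))$. The Taylor-type identity for $h^k$ is then nothing but the pair of the identities for $f^k$ and $g^k$, so it holds verbatim with remainder $R^h_k$. Applying the operator-norm computation above with $(T,S)=h^k(x)$ and then with $(T,S)=R^h_k(x,y)$, and using $\|f^k(x)\|\le\|f\|_{\textrm{Lip}-\gamma}$, $\|g^k(x)\|\le\|g\|_{\textrm{Lip}-\gamma}$ together with the monotonicity of $\phi$, bounds $\sup_x\|h^k(x)\|$ by $\phi(\|f\|_{\textrm{Lip}-\gamma},\|g\|_{\textrm{Lip}-\gamma})$ and, using in addition the positive homogeneity of $\phi$, bounds $\|R^h_k(x,y)\|$ by $\phi(\|f\|_{\textrm{Lip}-\gamma},\|g\|_{\textrm{Lip}-\gamma})\,\|x-y\|^{\gamma-k}$. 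Since $\phi(\|f\|_{\textrm{Lip}-\gamma},\|g\|_{\textrm{Lip}-\gamma})$ is precisely the $\ell^1$, $\ell^2$ or $\ell^\infty$ norm of $(\|f\|_{\textrm{Lip}-\gamma},\|g\|_{\textrm{Lip}-\gamma})$, this is the claimed bound on $\|h\|_{\textrm{Lip}-\gamma}$.

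For the second bullet, let $h=(h^0,\ldots,h^n)$ be $\textrm{Lip}-\gamma$ data for $(f,g)$ with remainders $R^h_k$, and let $\pi_F\colon F\times G\to F$ be the first projection; the hypothesis $\|z\|_F\le C\|(z,w)\|_{F\times G}$ says exactly that $\pi_F$ is bounded with $\|\pi_F\|\le C$. I would set $f^k(x):=\pi_F\circ h^k(x)$ and $R^f_k(x,y):=\pi_F\circ R^h_k(x,y)$; these are linear and symmetric, and $f^0=\pi_F\circ h^0=f$. Composing the Taylor identity for $h^k$ with $\pi_F$ and using $\pi_F\circ h^j(y)=f^j(y)$ (which leaves the coefficients $\tfrac{v\otimes(x-y)^{\otimes(j-k)}}{(j-k)!}$ untouched) produces the Taylor identity for $f^k$ with remainder $R^f_k$. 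Finally $\|f^k(x)\|\le C\|h^k(x)\|\le C\|h\|_{\textrm{Lip}-\gamma}$ and $\|R^f_k(x,y)\|\le C\|R^h_k(x,y)\|\le C\|h\|_{\textrm{Lip}-\gamma}\|x-y\|^{\gamma-k}$, so $f$ is $\textrm{Lip}-\gamma$ with $\|f\|_{\textrm{Lip}-\gamma}\le C\|h\|_{\textrm{Lip}-\gamma}$.

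I do not anticipate a real obstacle: both directions are bookkeeping once the right candidate data are written down. The two points deserving a moment's attention are that the Taylor identities survive being taken coordinatewise and being post-composed with $\pi_F$ without disturbing the combinatorial coefficients, and that, in view of the Remark, in the second bullet we are merely required to exhibit one valid family of higher-order terms and remainders for $f$ — we are handed one for $h$ — rather than to say anything about uniqueness. The parenthetical assertion that the $\ell^1,\ell^2,\ell^\infty$ norms on $F\times G$ satisfy the stated inequality holds with $C=1$, directly from the definitions.
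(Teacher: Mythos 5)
The paper states Proposition~\ref{ProductLipFunc} without supplying a proof, so there is nothing to compare against; your argument is correct and is exactly the bookkeeping one would expect. Defining the data coordinatewise (respectively by post-composition with $\pi_F$), observing that the Taylor identities pass through unchanged because they are linear in the target, and using positive homogeneity plus coordinatewise monotonicity of the $\ell^p$ norm to get $\|(T,S)\|\le\phi(\|T\|,\|S\|)$, gives precisely the stated bounds.
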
	
	\subsection{Local characterization and embeddings}
	Once the concept of Lipschitzness understood, one of the first and the most natural questions one may ask is whether $\textrm{Lip}-\gamma$ maps are $\textrm{Lip}-\gamma'$, for $\gamma\geq\gamma'>0$. We deal first with the trivial case where the domain of definition of the map is bounded:
	\begin{lemma}\label{EmbedLip1} Let $\gamma,\gamma'>0$ such that $\gamma'<\gamma$. Let $E$ and $F$ be two normed vector spaces and $U$ be a bounded subset of $E$. Let $f:U\to F$ be a $\textrm{Lip}-\gamma$ map. If $\lfloor\gamma'\rfloor<\lfloor\gamma\rfloor$, we assume that $(E^{\otimes k})_{1\leq k \leq \lfloor\gamma\rfloor}$ are endowed with norms satisfying the projective property. Then $f$ is $\textrm{Lip}-\gamma'$ and if $L\geq0$ is larger than or equal to the diameter of $U$ then:
\[\|f\|_{\textrm{Lip}-\gamma'}\leq\|f\|_{\textrm{Lip}-\gamma}\max\left(1,\sum\limits_{j=\lfloor\gamma'\rfloor+1}^{\lfloor\gamma\rfloor}{\frac{L^{j-\gamma'}}{(j-\lfloor\gamma'\rfloor)!}}+L^{\gamma-\gamma'}\right)\]
	\end{lemma}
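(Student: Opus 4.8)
The plan is to produce, directly from the $\textrm{Lip}-\gamma$ data of $f$, an explicit $\textrm{Lip}-\gamma'$ structure. Write $n=\lfloor\gamma\rfloor$, $n'=\lfloor\gamma'\rfloor$, let $M=\|f\|_{\textrm{Lip}-\gamma}$, and let $(f^0,\dots,f^n)$ and $(R_0,\dots,R_n)$ be an associated jet and remainders realising this norm. Since $\gamma'<\gamma$ we have $n'\le n$, so the truncated jet $(f^0,\dots,f^{n'})$ is well defined and each of its terms still satisfies $\sup_{x\in U}\|f^k(x)\|\le M$. The natural candidate for the degree-$\gamma'$ remainders is obtained by absorbing the Taylor terms of order strictly larger than $n'$: for $k\in[\![0,n']\!]$, $x,y\in U$ and $v\in E^{\otimes k}$ set
\[
\widetilde R_k(x,y)(v)\;=\;R_k(x,y)(v)\;+\;\sum_{j=n'+1}^{n} f^j(y)\!\left(\frac{v\otimes (x-y)^{\otimes(j-k)}}{(j-k)!}\right),
\]
with $\widetilde R_k(x,y)=0$ for $(x,y)\notin U\times U$ (these values being immaterial). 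Each $\widetilde R_k(x,y)$ is a bounded linear map $E^{\otimes k}\to F$, being a sum of $R_k(x,y)$ with compositions of $f^j(y)$ with the continuous insertion maps $v\mapsto v\otimes (x-y)^{\otimes(j-k)}$.

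Subtracting this definition from the $\textrm{Lip}-\gamma$ Taylor identity for $f^k$ leaves exactly
\[
f^k(x)(v)\;=\;\sum_{j=k}^{n'} f^j(y)\!\left(\frac{v\otimes (x-y)^{\otimes(j-k)}}{(j-k)!}\right)+\widetilde R_k(x,y)(v)\qquad(x,y\in U),
\]
which is the Taylor identity required at degree $\gamma'$, so the only remaining task is to bound $\|\widetilde R_k(x,y)\|$. Iterating the projective property gives $\|v\otimes (x-y)^{\otimes(j-k)}\|\le\|v\|\,\|x-y\|^{j-k}$ for $j\le n$, whence the operator norm of $v\mapsto f^j(y)(v\otimes (x-y)^{\otimes(j-k)})$ is at most $\|f^j(y)\|\,\|x-y\|^{j-k}\le M\|x-y\|^{j-k}$. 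Together with $\|R_k(x,y)\|\le M\|x-y\|^{\gamma-k}$ and after factoring out $\|x-y\|^{\gamma'-k}$ this yields
\[
\|\widetilde R_k(x,y)\|\;\le\;M\,\|x-y\|^{\gamma'-k}\left(\|x-y\|^{\gamma-\gamma'}+\sum_{j=n'+1}^{n}\frac{\|x-y\|^{j-\gamma'}}{(j-k)!}\right).
\]

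At this stage one uses that $U$ is bounded: every exponent $\gamma-\gamma'$ and $j-\gamma'$ with $j\ge n'+1$ is strictly positive, so each power of $\|x-y\|\le L$ inside the bracket can be replaced by the corresponding power of $L$; and since $k\le n'$ one has $(j-k)!\ge(j-n')!$. This gives $\|\widetilde R_k(x,y)\|\le M\Lambda\|x-y\|^{\gamma'-k}$ with $\Lambda=L^{\gamma-\gamma'}+\sum_{j=n'+1}^{n}L^{j-\gamma'}/(j-n')!$ (the sum being empty, and the bound still correct, when $n'=n$). Hence the truncated jet together with $(\widetilde R_0,\dots,\widetilde R_{n'})$ witnesses that $f$ is $\textrm{Lip}-\gamma'$ with constant $\max(M,M\Lambda)=M\max(1,\Lambda)$, and since $\|f\|_{\textrm{Lip}-\gamma'}$ is the least such constant the stated inequality follows. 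I do not anticipate a real difficulty here: the argument is essentially bookkeeping, the only points to watch being the factorial denominators and the sign of the exponents $j-\gamma'$ and $\gamma-\gamma'$, which is exactly what allows the finite diameter of $U$ to swallow the higher-order Taylor terms into the new remainder.
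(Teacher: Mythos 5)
Your proof is correct and follows essentially the same route as the paper's: truncate the jet at degree $n'$, absorb the Taylor terms of orders $n'+1,\dots,n$ together with the old remainder into a new remainder $\widetilde R_k$, then use the projective property, $\|x-y\|\le L$, and $(j-k)!\ge(j-n')!$ to obtain the stated constant. The only cosmetic difference is that the paper names the new remainders $S_k$; the definition, the intermediate estimate, and the final bound are the same.
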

	\begin{proof} Let $n,n'\in \mathbb{N}$, $(\varepsilon,\varepsilon')\in (0,1]^2$ such that $\gamma=n+\varepsilon$ and $\gamma'=n'+\varepsilon'$. Let $f^1,\ldots,f^n$ be maps on $U$ such that $(f,f^1,\ldots,f^n)$ is $\textrm{Lip}-\gamma$ and let $R_0,\ldots,R_n$ be the associated remainders. For $k\in[\![0,n']\!]$, define $S_k:U\times U\to\mathcal{L}(E^{\otimes k},F)$ as follows:
	\[\forall x,y \in U, \forall v\in E^{\otimes k}: S_k(x,y)(v)=\sum\limits_{j=n'+1}^{n}{f^j(y)\left(\frac{v\otimes(x-y)^{\otimes (j-k)}}{(j-k)!}\right)}+R_k(x,y)(v)\]
	By a straightforward computation, one gets that, for all $x,y\in U$:
	\[\|S_k(x,y)\|\leq \|f\|_{\textrm{Lip}-\gamma}\left(\sum\limits_{j=n'+1}^{n}{\frac{L^{j-\gamma'}}{(j-k)!}}+L^{\gamma-\gamma'}\right)\|x-y\|^{\gamma'-k}\]
	By recognising the $S_i$'s in the expansion formulas of the $f_i$'s, we see therefore that $(f,f^1,\ldots,f^{n'})$ is $\textrm{Lip}-\gamma'$ with $S_0,\ldots,S_{n'}$ as remainders and:
	\[\|f\|_{\textrm{Lip}-\gamma'}\leq\|f\|_{\textrm{Lip}-\gamma}\max\left(1,\sum\limits_{j=n'+1}^{n}{\frac{L^{j-\gamma'}}{(j-n')!}}+L^{\gamma-\gamma'}\right)\]\end{proof}
	\begin{Rem} With the notations of the previous lemma, if $\lfloor\gamma'\rfloor=\lfloor\gamma\rfloor$, $\sum_{\lfloor\gamma'\rfloor+1}^{\lfloor\gamma\rfloor}{\frac{L^{j-\gamma'}}{(j-\lfloor\gamma'\rfloor)!}}$ is understood to be zero.
	\end{Rem}
	\begin{Rem} With the notations of the previous lemma, we have the following simple control:
	\[\max\left(1,\sum\limits_{j=\lfloor\gamma'\rfloor+1}^{\lfloor\gamma\rfloor}{\frac{L^{j-\gamma'}}{(j-\lfloor\gamma'\rfloor)!}}+L^{\gamma-\gamma'}\right) \leq C_{\gamma,\gamma'}\max\left(1, L^{\gamma-\gamma'}\right)\]
	where $C_{\gamma,\gamma'}=1$ if $\lfloor\gamma'\rfloor=\lfloor\gamma\rfloor$ and $C_{\gamma,\gamma'}=e$ otherwise.
	\end{Rem}
	The aim now is to be able to go from the case where the domain of definition of the map is bounded to a more general one. This gives us an important local characterization of Lipschitz maps:
	\begin{lemma}\label{LocalLipGen} Let $\gamma>0$. Let $E$ and $F$ be two normed vector spaces and $U$ be a subset of $E$. For every $k \in [\![0,\lfloor\gamma\rfloor]\!]$, let $f^k:U\to \mathcal{L}(E^{\otimes k},F)$ be a map with values in the space of the symmetric $k$-linear mappings from $E$ to $F$. We assume that $(E^{\otimes k})_{1\leq k \leq \lfloor\gamma\rfloor}$ are endowed with norms satisfying the projective property and that there exists $\delta>0$ and $C\geq0$ such that, for every $x\in U$, $f_{|B(x,\delta)\cap U}$ is $\textrm{Lip}-\gamma$ with a norm less than or equal to $C$ $($where $f=(f^0,\ldots,f^{\lfloor\gamma\rfloor}))$. Then $f$ is $\textrm{Lip}-\gamma$ and:
	\[\|f\|_{\textrm{Lip}-\gamma}\leq C\max\left(1,\max_{0\leq k\leq \lfloor\gamma\rfloor}{\frac{1}{\delta^{\gamma-k}}(1+\sum\limits_{j=0}^{\lfloor\gamma\rfloor-k} \frac{\delta^j}{j!})}\right)\] 
	\end{lemma}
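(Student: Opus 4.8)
The plan is to verify the three defining properties of a $\textrm{Lip}-\gamma$ map directly on $U$, feeding in the local hypothesis only to handle nearby pairs of points. Write $n=\lfloor\gamma\rfloor$, so that $\gamma=n+\varepsilon$ with $\varepsilon\in(0,1]$ and, crucially, $n<\gamma$. The uniform bound is free: for any $x\in U$ one has $x\in B(x,\delta)\cap U$, on which $f$ is $\textrm{Lip}-\gamma$ with norm at most $C$, hence $\|f^k(x)\|\leq C$ for every $k\in[\![0,n]\!]$. So the task reduces to exhibiting global remainders $R_0,\dots,R_n$ realising the Taylor expansions on $U\times U$ with $\|R_k(x,y)\|\leq M\|x-y\|^{\gamma-k}$.

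I would simply \emph{define} $R_k$ on $U\times U$ by the identity it is required to satisfy,
\[R_k(x,y)(v)=f^k(x)(v)-\sum_{j=k}^{n}f^j(y)\!\left(\frac{v\otimes(x-y)^{\otimes(j-k)}}{(j-k)!}\right),\]
(and extend it arbitrarily to $E\times E$). Then the expansion formula holds by construction and everything comes down to estimating $\|R_k(x,y)\|$, which I would do in two regimes. If $\|x-y\|<\delta$ then both $x$ and $y$ lie in $B(x,\delta)\cap U$; the associated $k$-th remainder of the $\textrm{Lip}-\gamma$ map $f_{|B(x,\delta)\cap U}$, evaluated at $(x,y)$, is precisely the right-hand side above, so $\|R_k(x,y)\|\leq C\|x-y\|^{\gamma-k}$, already in the desired form.

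If $\|x-y\|\geq\delta$ I would bound $R_k(x,y)$ bluntly. Using $\|f^k(x)\|\leq C$, $\|f^j(y)\|\leq C$ and the projective property of the norms on $(E^{\otimes k})_{1\leq k\leq n}$ — which yields $\|v\otimes(x-y)^{\otimes(j-k)}\|\leq\|v\|\,\|x-y\|^{j-k}$ — one gets $\|R_k(x,y)\|\leq C\bigl(1+\sum_{i=0}^{n-k}\|x-y\|^{i}/i!\bigr)$. Every exponent here satisfies $0\leq i\leq n-k<\gamma-k$, so $\gamma-k-i>0$; since $\|x-y\|\geq\delta$ this gives $\|x-y\|^{i}\leq\delta^{-(\gamma-k-i)}\|x-y\|^{\gamma-k}$ and, similarly, $1\leq\delta^{-(\gamma-k)}\|x-y\|^{\gamma-k}$. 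Substituting and pulling out $\delta^{-(\gamma-k)}$ produces $\|R_k(x,y)\|\leq C\delta^{-(\gamma-k)}\bigl(1+\sum_{j=0}^{n-k}\delta^{j}/j!\bigr)\|x-y\|^{\gamma-k}$.

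Putting the two regimes together, $\|R_k(x,y)\|\leq M_k\|x-y\|^{\gamma-k}$ on $U\times U$ with $M_k=C\max\bigl(1,\delta^{-(\gamma-k)}(1+\sum_{j=0}^{n-k}\delta^{j}/j!)\bigr)$, and taking $M=\max_{0\leq k\leq n}M_k$ — which also dominates the uniform bound $C$ — shows $f$ is $\textrm{Lip}-\gamma$ with $\|f\|_{\textrm{Lip}-\gamma}\leq M$, the claimed bound. The computation is routine; the one point to watch is the far-points estimate, where one must use the projective property to strip off the $(x-y)^{\otimes(j-k)}$ factors and then rely on the strict inequality $\lfloor\gamma\rfloor<\gamma$ to convert the low powers of $\|x-y\|$ into the single target power $\|x-y\|^{\gamma-k}$ at the cost of negative powers of $\delta$.
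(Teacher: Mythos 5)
Your argument is correct and matches the paper's proof step by step: define $R_k$ by the required Taylor identity, invoke the local hypothesis when $\|x-y\|<\delta$, and when $\|x-y\|\geq\delta$ use the projective property together with the uniform bound $\|f^j\|\leq C$ and the strict inequality $\lfloor\gamma\rfloor<\gamma$ to absorb the low powers of $\|x-y\|$ into $\|x-y\|^{\gamma-k}$ at the cost of negative powers of $\delta$. The only cosmetic difference is that the paper organizes the far-points estimate by dividing through by $\|x-y\|^{\gamma-k}$ first, whereas you substitute term by term; the resulting bound is identical.
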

	\begin{proof} Let $k\in [\![0,\lfloor\gamma\rfloor]\!]$. We already know that $\sup\limits_{x\in U}\|f^k(x)\|\leq C$. Define $R_k:U\times U\to \mathcal{L}(E^{\otimes k},F)$ as follows:
	\[\forall x,y \in E, \forall v\in E^{\otimes k}:\quad R_k(x,y)(v)=f^k(x)(v)-\sum\limits_{j=k}^{\lfloor\gamma\rfloor} f^j(y)(\frac{v\otimes (x-y)^{\otimes (j-k)}}{(j-k)!})\]
	Let $x,y\in U$. If $\|x-y\|<\delta$, then, as $f_{|B(x,\delta)\cap U}$ is $\textrm{Lip}-\gamma$, we have:
	\[\|R_k(x,y)\|\leq C \|x-y\|^{\gamma-k}\]
	Assume that $\|x-y\|\geq\delta$, then, as the $(E^{\otimes k})_{1\leq k \leq n}$ are endowed with norms satisfying the projective property, we obtain:
	\[\begin{array}{rcl}
	\frac{\displaystyle \|R_k(x,y)\|}{\displaystyle \|x-y\|^{\gamma-k}}&\leq& \frac{\displaystyle \|f^k(x)\|}{\displaystyle \|x-y\|^{\gamma-k}}+\sum\limits_{j=k}^{\lfloor\gamma\rfloor} \frac{\displaystyle \|f^j(y)\|}{\displaystyle \|x-y\|^{\gamma-j} (j-k)!}\\
	&\leq&C \left(\frac{\displaystyle 1}{\displaystyle \delta^{\gamma-k}}+\sum\limits_{j=k}^{\lfloor\gamma\rfloor} \frac{\displaystyle 1}{\displaystyle \delta^{\gamma-j}(j-k)!}\right)\\
	&\leq&\frac{\displaystyle C}{\displaystyle \delta^{\gamma-k}}\left(1+\sum\limits_{j=0}^{\lfloor\gamma\rfloor-k} \frac{\displaystyle \delta^j}{\displaystyle j!}\right)\\
	\end{array}\]
	We deduce then that $f$ is $\textrm{Lip}-\gamma$ on $U$ with the suggested upper-bound of $\|f\|_{\textrm{Lip}-\gamma}$.\end{proof}
	\begin{Rem} With the notations of the previous lemma, we have:
	\[\max\left(1,\max_{0\leq k\leq \lfloor\gamma\rfloor}{\frac{1}{\delta^{\gamma-k}}(1+\sum\limits_{j=0}^{\lfloor\gamma\rfloor-k} \frac{\delta^j}{j!})}\right) \leq (1+e) \max\left(1,\frac{1}{\delta^{\gamma}}\right)\]
	\end{Rem}
	We can now state the following natural embedding theorem:
	\begin{theo}\label{EmbedLip2} Let $\gamma,\gamma'>0$ such that $\gamma'<\gamma$. Let $E$ and $F$ be two normed vector spaces and $U$ be a subset of $E$. We assume that $(E^{\otimes k})_{1\leq k \leq \lfloor\gamma\rfloor}$ are endowed with norms satisfying the projective property. Let $f:U\to F$ be a $\textrm{Lip}-\gamma$ map. Then $f$ is $\textrm{Lip}-\gamma'$ and there exists a constant $M_{\gamma,\gamma'}$ (depending only on $\gamma$ and $\gamma'$) such that $\|f\|_{\textrm{Lip}-\gamma'}\leq M_{\gamma,\gamma'} \|f\|_{\textrm{Lip}-\gamma}$
	\end{theo}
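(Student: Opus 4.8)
The plan is to bootstrap the two previous lemmas. Lemma~\ref{EmbedLip1} turns a $\textrm{Lip}-\gamma$ map on a \emph{bounded} set into a $\textrm{Lip}-\gamma'$ one, but with a constant depending on the diameter; Lemma~\ref{LocalLipGen} reassembles a \emph{uniform local} $\textrm{Lip}-\gamma'$ estimate into a global one. The idea is to run the first on balls of a fixed radius (say $1$), so that the diameter is the fixed number $2$ and the resulting constant depends only on $\gamma$ and $\gamma'$, and then patch with the second.

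First I would fix $\delta=1$ and, for each $x\in U$, look at the restriction $f_{|B(x,1)\cap U}$. Every defining inequality of the $\textrm{Lip}-\gamma$ property is a statement quantified over points of the domain, so restricting the domain preserves the property and does not increase the norm (the same $M$, the same $f^k$ and the same remainders, now read on the smaller pair set, still work); hence $f_{|B(x,1)\cap U}$ is $\textrm{Lip}-\gamma$ with norm at most $\|f\|_{\textrm{Lip}-\gamma}$. Moreover $B(x,1)\cap U$ is bounded with diameter at most $2$, and $(E^{\otimes k})_{1\le k\le\lfloor\gamma\rfloor}$ carries norms with the projective property by hypothesis, so Lemma~\ref{EmbedLip1} applies with $L=2$ and gives
\[
\|f_{|B(x,1)\cap U}\|_{\textrm{Lip}-\gamma'}\ \le\ C_{\gamma,\gamma'}\,\|f\|_{\textrm{Lip}-\gamma},\qquad C_{\gamma,\gamma'}:=\max\left(1,\ \sum_{j=\lfloor\gamma'\rfloor+1}^{\lfloor\gamma\rfloor}\frac{2^{j-\gamma'}}{(j-\lfloor\gamma'\rfloor)!}+2^{\gamma-\gamma'}\right),
\]
a constant independent of $x$ and of $U$.

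Next I would feed this uniform estimate into Lemma~\ref{LocalLipGen}, used with $\gamma'$ in place of $\gamma$, with $\delta=1$ and $C=C_{\gamma,\gamma'}\|f\|_{\textrm{Lip}-\gamma}$; its hypothesis on tensor powers up to $\lfloor\gamma'\rfloor$ is inherited from the one up to $\lfloor\gamma\rfloor$ since $\lfloor\gamma'\rfloor\le\lfloor\gamma\rfloor$. This yields that $f=(f^0,\dots,f^{\lfloor\gamma'\rfloor})$ is $\textrm{Lip}-\gamma'$ on all of $U$ with
\[
\|f\|_{\textrm{Lip}-\gamma'}\ \le\ C_{\gamma,\gamma'}\,\|f\|_{\textrm{Lip}-\gamma}\,\max\left(1,\ \max_{0\le k\le\lfloor\gamma'\rfloor}\Bigl(1+\sum_{j=0}^{\lfloor\gamma'\rfloor-k}\frac{1}{j!}\Bigr)\right),
\]
so one may take $M_{\gamma,\gamma'}$ to be the product of $C_{\gamma,\gamma'}$ with the second maximum, which depends only on $\gamma$ and $\gamma'$.

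There is no real obstacle; the only points needing care are the (routine but essential) verification that restriction to a ball preserves the $\textrm{Lip}-\gamma$ property without increasing the norm, the observation that the diameter bound $2$ is uniform over $x$ so that Lemma~\ref{EmbedLip1} produces an $x$-independent constant, and the remark that the projective-property assumption on $(E^{\otimes k})_{1\le k\le\lfloor\gamma\rfloor}$ automatically covers the powers up to $\lfloor\gamma'\rfloor$ needed in the second application.
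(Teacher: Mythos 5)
Your argument is exactly the paper's: restrict to balls so that Lemma~\ref{EmbedLip1} applies with a diameter bound independent of the center, then patch globally with Lemma~\ref{LocalLipGen}. The only (immaterial) difference is that you fix $\delta=1$ at the outset, while the paper keeps $\delta$ free and takes the infimum over $\delta>0$ at the end to obtain a slightly sharper constant, a refinement it records in the subsequent remark.
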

	\begin{proof} Let $\delta>0$ and $x\in U$. $f$ is $\textrm{Lip}-\gamma$ on $B(x,\delta)\cap U$ with a $\textrm{Lip}-\gamma$ norm less than or equal to $\|f\|_{\textrm{Lip}-\gamma}$. Then, by lemma \ref{EmbedLip1}, $f$ is $\textrm{Lip}-\gamma'$ on $B(x,\delta)\cap U$ and:
	\[\|f\|_{\textrm{Lip}-\gamma',B(x,\delta)\cap U}\leq\|f\|_{\textrm{Lip}-\gamma}\max\left(1,\sum\limits_{j=\lfloor\gamma'\rfloor+1}^{\lfloor\gamma\rfloor}{\frac{(2\delta)^{j-\gamma'}}{(j-\lfloor\gamma'\rfloor)!}}+(2\delta)^{\gamma-\gamma'}\right)\]
	Using now lemma \ref{LocalLipGen}, we deduce that $f$ is $\textrm{Lip}-\gamma'$ on $U$ with a $\textrm{Lip}-\gamma'$ controlled as follows:
	\[\begin{array}{rl}
	\|f\|_{\textrm{Lip}-\gamma'}\leq&\|f\|_{\textrm{Lip}-\gamma}\max\left(1,\sum\limits_{j=\lfloor\gamma'\rfloor+1}^{\lfloor\gamma\rfloor}{\displaystyle\frac{(2\delta)^{j-\gamma'}}{(j-\lfloor\gamma'\rfloor)!}}+(2\delta)^{\gamma-\gamma'}\right).\\
	&\max\left(1,\max\limits_{0\leq k\leq \lfloor\gamma'\rfloor}{\displaystyle\frac{1}{\delta^{\gamma'-k}}(1+\sum\limits_{j=0}^{\lfloor\gamma'\rfloor-k} \displaystyle\frac{\delta^j}{j!})}\right)\\
	\end{array}\]
	The above inequality holding for every $\delta>0$, we can make it sharper by taking the infinimum of the right-hand side over all possible positive values of $\delta$. This ends the proof.\end{proof}
	\begin{Rem} 
	\[	M_{\gamma,\gamma'}=\inf\limits_{\delta>0} \left\{\max\left(1,\sum\limits_{\lfloor\gamma'\rfloor+1}^{\lfloor\gamma\rfloor}{\frac{(2\delta)^{j-\gamma'}}{(j-\lfloor\gamma'\rfloor)!}}+(2\delta)^{\gamma-\gamma'}\right)
	\max\left(1,\max\limits_{0\leq k\leq \lfloor\gamma'\rfloor}{\frac{1}{\delta^{\gamma'-k}}(1+\sum\limits_{0}^{\lfloor\gamma'\rfloor-k} \frac{\delta^j}{j!})}\right)\right\}
	\]
	By considering the value $\delta=1/2$ in the expression above, we get the following estimate: 
	\[M_{\gamma,\gamma'}\leq 2^{\gamma'}e(1+e^{1/2})\leq 2^{\gamma}e(1+e^{1/2})\]
	which has the additional advantage of being dependent on only one of the variables $\gamma$ or $\gamma'$.
	\end{Rem}
	\subsection{Smooth functions on open convex sets}
	As highlighted for example in \cite{TCL}, a simpler proof of theorem \ref{EmbedLip2} can be given when the domain of definition of the map is open and convex. We recall a characterization of Lipschitz maps in this setting, which also gives a very useful recursive definition of Lipschitzness. The proof of the following is trivial and can be found if needed in \cite{TCL} for example:
	\begin{theo}\label{CaracLip} Let $n\in \mathbb{N}$, $0< \varepsilon\leq 1$ and $C\geq 0$. Let $E$ and $F$ be two normed vector spaces and $U$ be a subset of $E$.  We assume that $(E^{\otimes k})_{1\leq k \leq n}$ are endowed with norms satisfying the projective property. Let $f:U\to F$ be a map and for every $k\in [\![1,n]\!]$, let $f^k:U\to \mathcal{L}(E^{\otimes k},F)$ be a map with values in the space of the symmetric $k$-linear mappings from $E$ to $F$. We consider the two following assertions:\begin{description}
	\item[(A1)] $(f,f^1,\ldots,f^n)$ is $\textrm{Lip}-(n+\varepsilon)$ and $\|f\|_{\textrm{Lip}-(n+\varepsilon)}\leq C$.
	\item[(A2)] $f$ is $n$ times differentiable, with $f^1,\ldots,f^n$ being its successive derivatives. $\|f\|_{\infty}$, $\|f^1\|_{\infty}$, $\ldots$, $\|f^n\|_{\infty}$ are upper-bounded by $C$ and for all $x,y\in U: \|f^n(x)-f^n(y)\|\leq C \|x-y\|^{\varepsilon}$.\end{description}
	If $U$ is open then $\mathbf{(A1)}\Rightarrow \mathbf{(A2)}$. If furthermore $U$ is convex then $\mathbf{(A1)}\Leftrightarrow \mathbf{(A2)}$.
	\end{theo}
	The following result about smooth maps is very useful and comes as an easy consequence of theorem \ref{CaracLip}:
	\begin{Cor} Let $n \in \mathbb{N}^*$. A map $f$ defined on a given open convex set that is $n+1$ times continuously differentiable and is such that its derivatives are bounded is Lipschitz-$n$ on that set (assuming that the space of the domain of definition and its successive tensor product spaces are endowed with norms satisfying the projective property). Its Lipschitz-$n$ norm can be upper-bounded by the following constant:
	\[L_n=\max{\{\|f\|_{\infty},\|f^{1}\|_{\infty},\ldots,\|f^{n+1}\|_{\infty}\}}\]
	\end{Cor}
	When the domain of a Lipschitz map is open, convex and bounded, we get a sharper estimate than the one obtained in lemma \ref{EmbedLip1}:
	\begin{lemma}\label{EmbedLip11} Let $\gamma,\gamma'>0$ such that $\gamma'\leq \gamma$. Let $E$ and $F$ be two normed vector spaces and $U$ be an open convex bounded subset of $E$. Let $f:U\to F$ be a $\textrm{Lip}-\gamma$ function. We assume that $(E^{\otimes k})_{1\leq k \leq \lfloor\gamma\rfloor}$ are endowed with norms satisfying the projective property. Then $f$ is $\textrm{Lip}-\gamma'$ and if $L\geq0$ is larger than or equal to the diameter of $U$ then:
\[\|f\|_{\textrm{Lip}-\gamma'}\leq\|f\|_{\textrm{Lip}-\gamma}\max\left(1,L^{\min(\lfloor\gamma'\rfloor+1,\gamma)-\gamma'}\right)\]
	\end{lemma}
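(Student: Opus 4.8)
The plan is to use Lemma \ref{CaracLip} to translate both $\textrm{Lip}-\gamma$ and $\textrm{Lip}-\gamma'$ into the classical differentiable--H\"older description $\mathbf{(A2)}$, which is legitimate here because $U$ is open and convex. Write $\gamma=n+\varepsilon$ and $\gamma'=n'+\varepsilon'$ with $n=\lfloor\gamma\rfloor$, $n'=\lfloor\gamma'\rfloor$ and $\varepsilon,\varepsilon'\in(0,1]$, and set $C=\|f\|_{\textrm{Lip}-\gamma}$. By $\mathbf{(A1)}\Rightarrow\mathbf{(A2)}$ applied to $\gamma$, $f$ is $n$ times differentiable with successive derivatives $f^1,\dots,f^n$, each of sup-norm at most $C$, and $f^n$ is $\varepsilon$-H\"older with constant $C$. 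Since $\gamma'<\gamma$ forces $n'\le n$, $f$ is in particular $n'$ times differentiable with derivatives $f^1,\dots,f^{n'}$, and $\|f\|_\infty,\dots,\|f^{n'}\|_\infty\le C\le C\max(1,L^{\min(n'+1,\gamma)-\gamma'})$, the last step because $\min(n'+1,\gamma)-\gamma'\ge 0$. So, by $\mathbf{(A2)}\Rightarrow\mathbf{(A1)}$ applied to $\gamma'$, it suffices to bound the $\varepsilon'$-H\"older seminorm of $f^{n'}$ by $C\max(1,L^{\min(n'+1,\gamma)-\gamma'})$.

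The first step is to show that $f^{n'}$ is H\"older of exponent $\mu:=\min(n'+1,\gamma)-n'$ with constant $C$. If $n'<n$ then $n'+1\le n<\gamma$, so $\mu=1$; moreover $f^{n'+1}$ exists with $\|f^{n'+1}\|_\infty\le C$, and the mean value inequality on the convex set $U$ gives $\|f^{n'}(x)-f^{n'}(y)\|\le C\|x-y\|$, which is the claim for $\mu=1$. If $n'=n$ then $\gamma\le n+1$, so $\mu=\gamma-n=\varepsilon$, and $f^{n'}=f^n$ is $\varepsilon$-H\"older with constant $C$ directly from $\mathbf{(A2)}$. In both cases $\mu\ge\varepsilon'$: this is obvious when $n'<n$, and when $n'=n$ the strict inequality $\gamma'<\gamma$ forces $\varepsilon'<\varepsilon=\mu$.

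The second step is an elementary interpolation using the diameter bound. For $x,y\in U$ one has $\|x-y\|\le L$ and $\mu-\varepsilon'\ge 0$, hence
\[\|f^{n'}(x)-f^{n'}(y)\|\le C\|x-y\|^{\mu}=C\|x-y\|^{\mu-\varepsilon'}\|x-y\|^{\varepsilon'}\le C\max(1,L^{\mu-\varepsilon'})\|x-y\|^{\varepsilon'}.\]
Since $\mu-\varepsilon'=\min(n'+1,\gamma)-\gamma'$, this is precisely the desired bound on the $\varepsilon'$-H\"older seminorm of $f^{n'}$. Combining it with the sup-norm bounds above and invoking $\mathbf{(A2)}\Rightarrow\mathbf{(A1)}$ of Lemma \ref{CaracLip} yields that $f$ is $\textrm{Lip}-\gamma'$ with $\|f\|_{\textrm{Lip}-\gamma'}\le C\max(1,L^{\min(n'+1,\gamma)-\gamma'})$.

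The only mildly delicate point is the case distinction $n'<n$ versus $n'=n$ and the check that the two regimes assemble into the single exponent $\min(n'+1,\gamma)-\gamma'$; everything else — the mean value inequality on a convex domain and the one-line interpolation — is routine. It is worth noting that convexity of $U$ enters twice: once through Lemma \ref{CaracLip} to obtain the differentiable picture, and once for the mean value inequality, which is exactly why the estimate obtained here can be sharper than the one in Lemma \ref{EmbedLip1}.
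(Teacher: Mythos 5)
Your proof is correct and follows precisely the route the paper's (deliberately terse) proof indicates: translate to the differentiable--H\"older description of Lemma \ref{CaracLip} on the open convex set, use the fundamental theorem of calculus / mean value inequality for the case $\lfloor\gamma'\rfloor<\lfloor\gamma\rfloor$, and interpolate using the diameter. One small remark worth recording: the bound $\|f^{n'}(x)-f^{n'}(y)\|\leq C\|x-y\|$ via the mean value inequality implicitly uses the projective property to identify the operator norm of the derivative of $f^{n'}$ (as a map $U\to\mathcal{L}(E,\mathcal{L}(E^{\otimes n'},F))$) with that of $f^{n'+1}(x)\in\mathcal{L}(E^{\otimes(n'+1)},F)$; this is where the hypothesis on the tensor norms enters, so it is worth making explicit.
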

	\begin{proof} Uses the characterization in lemma \ref{CaracLip} and, if $\lfloor\gamma'\rfloor<\lfloor\gamma\rfloor$, the fundamental theorem of calculus.\end{proof}
	Always in the case of an open convex domain, we also get a sharper control of the Lipschitz norm from the uniform local behaviour of the map:
	\begin{lemma}\label{LocalLip} Let $\gamma>0$. Let $E$ and $F$ be two normed vector spaces and $U$ be an open convex subset of $E$. We assume that $(E^{\otimes k})_{1\leq k \leq \lfloor\gamma\rfloor}$ are endowed with norms satisfying the projective property. Let $f:U\to F$ be a map such that there exists $\delta>0$ and $C\geq0$ such that, for every $x\in U$, $f_{|B(x,\delta)\cap U}$ is $\textrm{Lip}-\gamma$ with a norm less than or equal to $C$. Then $f$ is $\textrm{Lip}-\gamma$ and:
	\[\|f\|_{\textrm{Lip}-\gamma}\leq C\max\left(1,\frac{2}{\delta^{\gamma-\lfloor\gamma\rfloor}}\right)\] 
	\end{lemma}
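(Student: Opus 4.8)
The plan is to reduce the statement to the derivative characterisation of Lemma~\ref{CaracLip} and then to observe that an $\varepsilon$-Hölder estimate holding only within a fixed ``gap'' $\delta$ upgrades automatically to a global one at the cost of the multiplicative factor $2/\delta^{\gamma-\lfloor\gamma\rfloor}$; there is no genuine analytic content beyond this bookkeeping. Write $\gamma=n+\varepsilon$ with $n=\lfloor\gamma\rfloor\in\mathbb{N}$ and $\varepsilon=\gamma-\lfloor\gamma\rfloor\in(0,1]$.

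First I would fix $x\in U$ and note that $B(x,\delta)\cap U$ is open and contains $x$; by hypothesis $f$ restricted to it satisfies \textbf{(A1)} of Lemma~\ref{CaracLip} with constant $C$, hence (openness alone being enough for that implication) it satisfies \textbf{(A2)} there. Letting $x$ vary and using that differentiability, and the values of derivatives, are local properties, this shows that $f$ is $n$ times differentiable on all of $U$, with globally well-defined successive derivatives $f^1,\ldots,f^n$. Next I would read off the uniform bounds: each $x\in U$ lies in its own ball $B(x,\delta)\cap U$, so \textbf{(A2)} on that ball gives $\|f^k(x)\|\leq C$ for every $k\in[\![0,n]\!]$, whence $\|f^k\|_\infty\leq C$; and if $x,y\in U$ with $\|x-y\|<\delta$ then $y\in B(x,\delta)\cap U$, and the same application yields $\|f^n(x)-f^n(y)\|\leq C\|x-y\|^\varepsilon$.

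The remaining step is to globalise this local Hölder bound on $f^n$. For $x,y\in U$ with $\|x-y\|\geq\delta$ I would simply estimate $\|f^n(x)-f^n(y)\|\leq\|f^n(x)\|+\|f^n(y)\|\leq 2C\leq 2C\delta^{-\varepsilon}\|x-y\|^\varepsilon$, using $\|x-y\|^\varepsilon\geq\delta^\varepsilon$. Combining the two ranges gives $\|f^n(x)-f^n(y)\|\leq C\max(1,2/\delta^\varepsilon)\|x-y\|^\varepsilon$ for all $x,y\in U$, while trivially $\|f^k\|_\infty\leq C\leq C\max(1,2/\delta^\varepsilon)$ for every $k$. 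Thus $f$ satisfies \textbf{(A2)} on $U$ with constant $C\max(1,2/\delta^\varepsilon)$, and since $U$ is convex Lemma~\ref{CaracLip} gives back \textbf{(A1)}: $f$ is $\textrm{Lip}-\gamma$ on $U$ with $\|f\|_{\textrm{Lip}-\gamma}\leq C\max(1,2/\delta^{\gamma-\lfloor\gamma\rfloor})$, as claimed.

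I do not expect a real obstacle here. The only point requiring a little care is the patching: one must check that the derivatives produced ball-by-ball are consistent (they are, being determined by the germ of $f$ at each point) and that the per-ball bound $C$ transfers to a uniform bound on $U$ (it does, since every point sits in its own ball). It is also worth noting that convexity of $U$ is used only in the implication \textbf{(A2)}$\Rightarrow$\textbf{(A1)} of Lemma~\ref{CaracLip}; the reverse implication needs only openness, which is why it suffices that the sets $B(x,\delta)\cap U$ be open rather than convex.
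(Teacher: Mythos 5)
Your proof is correct and follows exactly the route the paper indicates: invoke Lemma~\ref{CaracLip} to reduce to the derivative characterization, patch the local data using openness of each $B(x,\delta)\cap U$, and globalise the $\varepsilon$-H\"older bound on $f^n$ by the same two-case split ($\|x-y\|<\delta$ versus $\|x-y\|\geq\delta$) used in Lemma~\ref{LocalLipGen}, with convexity only needed for the return implication $\mathbf{(A2)}\Rightarrow\mathbf{(A1)}$. The constant $C\max(1,2/\delta^{\gamma-\lfloor\gamma\rfloor})$ falls out exactly as claimed; the observation that per-ball derivatives agree on overlaps because they are genuine derivatives of $f$ is the one detail the paper leaves implicit and you supply correctly.
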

	\begin{proof} Uses the characterization in theorem \ref{CaracLip} and the same technique as in the proof of lemma \ref{LocalLipGen}. If necessary, a complete proof can be found for example in \cite{TCL}.\end{proof}
	Theorem \ref{EmbedLip2} now becomes:
	\begin{theo}\label{EmbedLip22} Let $\gamma,\gamma'>0$ such that $\gamma'<\gamma$. Let $E$ and $F$ be two normed vector spaces and $U$ be an open convex subset of $E$. We assume that $(E^{\otimes k})_{1\leq k \leq \lfloor\gamma\rfloor}$ are endowed with norms satisfying the projective property. Let $f:U\to F$ be a $\textrm{Lip}-\gamma$ map. Then $f$ is $\textrm{Lip}-\gamma'$ and:
	\[\|f\|_{\textrm{Lip}-\gamma'}\leq 4 \|f\|_{\textrm{Lip}-\gamma}\]
	\end{theo}
	\begin{proof}
	The proof comes off as an easy corollary of lemmas \ref{EmbedLip1} and \ref{LocalLip}. Using the same technique as in the proof of theorem \ref{EmbedLip2}, we show that:
	\[\|f\|_{\textrm{Lip}-\gamma'}\leq m_{\gamma,\gamma'} \|f\|_{\textrm{Lip}-\gamma}\]
	where:
	\[m_{\gamma,\gamma'}=\inf_{\delta>0} \max\left(1,(2\delta)^{\min(\lfloor\gamma'\rfloor+1,\gamma)-\gamma'}\right)\max\left(1,\frac{2}{\delta^{\gamma'-\lfloor\gamma'\rfloor}}\right)\]
	and one easily gets $ m_{\gamma,\gamma'}\leq 4$.
	\end{proof}	
	\subsection{Composition of Lipschitz functions}
	\subsubsection{Composition with linear maps}
	As one would expect, a well-defined composition of two Lipschitz maps is also Lipschitz. We start first with the simple case where one of the maps is linear as the derivatives are easier to extract, though, technically, a continuous linear map defined on the whole space is not necessarily Lipschitz (as its values are not uniformly bounded unless it is null).
	\begin{Prop}\label{Compolinfunc} Let $E$, $F$ and $G$ be three normed vector spaces and $U$ be a subset of $E$. Let $\gamma>0$ and let $f:U \to F$ be a $\textrm{Lip}-\gamma$ map. Let $u: F\to G$ a bounded linear map. Then $u\circ f$ is $\textrm{Lip}-\gamma$ and $\|u\circ f\|_{\textrm{Lip}-\gamma} \leq \|u\|\|f\|_{\textrm{Lip}-\gamma}$.
	\end{Prop}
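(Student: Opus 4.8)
The plan is to push the Lipschitz data of $f$ forward through the linear map $u$, component by component, and to check that each of the three defining conditions of a $\textrm{Lip}-\gamma$ function survives, losing only a single factor $\|u\|$.

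First I would write $\gamma=n+\varepsilon$ with $n\in\mathbb{N}$ and $0<\varepsilon\leq1$, fix a collection $(f^0,f^1,\ldots,f^n)$ with $f^0=f$ realising the $\textrm{Lip}-\gamma$ property of $f$, and let $R_0,\ldots,R_n$ be its associated remainders. For each $k\in[\![0,n]\!]$ I would then set $g^k(x)=u\circ f^k(x)\in\mathcal{L}(E^{\otimes k},G)$ and $\widetilde{R}_k(x,y)=u\circ R_k(x,y)$; since $u$ is linear, each $g^k(x)$ is still a symmetric $k$-linear map, so $(g^0,\ldots,g^n)$ is a legitimate candidate for $\textrm{Lip}-\gamma$ data of $u\circ f=g^0$, with $\widetilde{R}_0,\ldots,\widetilde{R}_n$ as candidate remainders.

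Next I would verify the three estimates, all of which reduce to the elementary bound $\|u\circ T\|\leq\|u\|\,\|T\|$ for $T\in\mathcal{L}(E^{\otimes k},F)$ (immediate from the definition of the operator norm). The uniform bound reads $\sup_{x\in U}\|g^k(x)\|\leq\|u\|\sup_{x\in U}\|f^k(x)\|\leq\|u\|\,\|f\|_{\textrm{Lip}-\gamma}$. The Taylor-type expansion for $g^k$ is obtained by applying $u$ to both sides of the expansion satisfied by $f^k$ and using that $u$ commutes with finite sums and with multiplication by the scalars $1/(j-k)!$, which produces exactly $g^k(x)(v)=\sum_{j=k}^{n}g^j(y)\bigl(\tfrac{v\otimes(x-y)^{\otimes(j-k)}}{(j-k)!}\bigr)+\widetilde{R}_k(x,y)(v)$. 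The remainder estimate is $\|\widetilde{R}_k(x,y)\|\leq\|u\|\,\|R_k(x,y)\|\leq\|u\|\,\|f\|_{\textrm{Lip}-\gamma}\,\|x-y\|^{\gamma-k}$. Taking $M=\|u\|\,\|f\|_{\textrm{Lip}-\gamma}$ then yields the claim.

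I do not expect a genuine obstacle here: the only points deserving a word of care are that composing with $u$ preserves both the symmetry and the $k$-linearity of the coefficient maps, and that $u$ acts only on the target space $F$ and not on tensor powers of $E$ — which is why, in contrast with the composition of two genuinely nonlinear Lipschitz maps treated later, no projective or compatibility assumption on the tensor norms is required.
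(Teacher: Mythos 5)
Your proposal is correct and follows exactly the paper's argument: push $f^k$ and $R_k$ forward by $u$, then check the three defining estimates using $\|u\circ T\|\leq\|u\|\,\|T\|$. Your closing remark on why no projective or compatibility hypotheses are needed is a nice addition but does not change the substance.
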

	\begin{proof} Let $n\in \mathbb{N}$ such that $\gamma \in (n,n+1]$. Let $f^1,\ldots,f^n$ be maps on $U$ such that $(f,f^1,\ldots,f^n)$ is $\textrm{Lip}-\gamma$ and let $R_0,\ldots,R_n$ be the associated remainders. Let $g=u\circ f$ and for every $k\in[\![0,n]\!]$, let $g^k$ and $S_k$ be defined as follows:
	\[\forall x,y \in E, \forall v\in E^{\otimes k}: g^k(x)(v)=u(f^k(x)(v)),\quad S_k(x,y)(v)=u(R_k(x,y)(v))\]
	Then it is easy to check that $(g,g^1,\ldots,g^n)$ is $\textrm{Lip}-\gamma$ with $S_0,\ldots,S_n$ as remainders and with a $\textrm{Lip}-\gamma$ norm upper-bounded by $\|u\|\|f\|_{\textrm{Lip}-\gamma}$.\end{proof}
	\begin{Rem} Although a linear map in general is not Lipschitz, we can restrict ourselves, in the previous proposition, to a bounded domain of $F$ so that the restriction of $u$ on that domain is Lipschitz. We will be then in the case of a composition of two Lipschitz maps but we don't get a control of the Lipschitz norm as sharp as the one in proposition \ref{Compolinfunc}.
	\end{Rem}
	\begin{Rem} The second item of proposition \ref{ProductLipFunc} now becomes a special case of proposition \ref{Compolinfunc}. The condition on the norms on the Cartesian product of vector spaces is equivalent to the continuity of the (linear) projection map onto one of these spaces.
	\end{Rem}
	\begin{Prop}\label{Compolinfunc2} Let $\gamma>0$ and $E$, $F$ and $G$ be three normed vector spaces. We assume that $(E^{\otimes k})_{1 \leq k\leq \lfloor\gamma\rfloor}$ and $(F^{\otimes k})_{1 \leq k\leq \lfloor\gamma\rfloor}$ are endowed with compatible norms. Let $f:F \to G$ be a $\textrm{Lip}-\gamma$ map and $u: E\to F$ a bounded linear map. Then $f\circ u$ is $\textrm{Lip}-\gamma$ and $\|f\circ u\|_{\textrm{Lip}-\gamma} \leq \|f\|_{\textrm{Lip}-\gamma} \max(1,\|u\|^{\gamma})$.
	\end{Prop}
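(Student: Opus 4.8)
The plan is to push the jet of $f$ forward through the linear map $u$. Write $\gamma=n+\varepsilon$ with $n\in\mathbb N$ and $\varepsilon\in(0,1]$, let $f^1,\dots,f^n$ be symmetric multilinear maps witnessing that $f$ is $\textrm{Lip}-\gamma$ with associated remainders $R_0,\dots,R_n$, and set $g:=f\circ u$ (and $g^0:=g$). For $k\in[\![1,n]\!]$ I would define
\[
g^k(x)(v):=f^k(u(x))\bigl(u^{\otimes k}(v)\bigr),\qquad x\in E,\ v\in E^{\otimes k},
\]
together with the candidate remainders $S_k(x,y)(v):=R_k(u(x),u(y))\bigl(u^{\otimes k}(v)\bigr)$. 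The preliminary checks are that each $g^k(x)$ is a bounded symmetric $k$-linear map: boundedness follows from $g^k(x)=f^k(u(x))\circ u^{\otimes k}$ and $\|u^{\otimes k}\|\le C\|u\|^k<\infty$ (where $C$ denotes the compatibility constant), while symmetry follows from the identity $u^{\otimes k}(\sigma v)=\sigma\bigl(u^{\otimes k}v\bigr)$ for $\sigma\in\mathcal S_k$, combined with the symmetry of $f^k(u(x))$.

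The core of the argument is to recognise $S_k$ as the genuine order-$k$ remainder of $g$. This rests on the two functoriality facts $u^{\otimes k}(a)\otimes u^{\otimes l}(b)=u^{\otimes(k+l)}(a\otimes b)$ and $u(x)-u(y)=u(x-y)$, the former being checked on elementary tensors and extended by linearity. Feeding the Taylor expansion of $f$ at $u(y)$ with the vector $u^{\otimes k}(v)$, the $j$-th summand $f^j(u(y))\bigl(\frac{u^{\otimes k}(v)\otimes(u(x)-u(y))^{\otimes(j-k)}}{(j-k)!}\bigr)$ collapses to $f^j(u(y))\bigl(u^{\otimes j}\bigl(\frac{v\otimes(x-y)^{\otimes(j-k)}}{(j-k)!}\bigr)\bigr)=g^j(y)\bigl(\frac{v\otimes(x-y)^{\otimes(j-k)}}{(j-k)!}\bigr)$, so that
\[
g^k(x)(v)=\sum_{j=k}^{n}g^j(y)\left(\frac{v\otimes(x-y)^{\otimes(j-k)}}{(j-k)!}\right)+S_k(x,y)(v),
\]
which is exactly the required expansion. (The degenerate case $n=0$ reduces to $g(x)=g(y)+S_0(x,y)$, i.e. the definition of $R_0$.)

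It remains to produce the uniform bounds, and here the key observation — essentially the only thing worth attention in the whole argument — is that the two powers of $\|u\|$ that appear recombine to give the exponent $\gamma$ on the nose. For the supremum bounds, $\|g^k(x)\|\le\|f^k(u(x))\|\,\|u^{\otimes k}\|\le C\|f\|_{\textrm{Lip}-\gamma}\|u\|^k\le C\|f\|_{\textrm{Lip}-\gamma}\max(1,\|u\|^\gamma)$ since $0\le k\le n<\gamma$. For the remainders, combining $\|R_k(u(x),u(y))\|\le\|f\|_{\textrm{Lip}-\gamma}\|u(x-y)\|^{\gamma-k}\le\|f\|_{\textrm{Lip}-\gamma}\|u\|^{\gamma-k}\|x-y\|^{\gamma-k}$ with $\|u^{\otimes k}(v)\|\le C\|u\|^k\|v\|$ yields $\|S_k(x,y)\|\le C\|f\|_{\textrm{Lip}-\gamma}\|u\|^{\gamma}\|x-y\|^{\gamma-k}\le C\|f\|_{\textrm{Lip}-\gamma}\max(1,\|u\|^\gamma)\|x-y\|^{\gamma-k}$. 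Hence $(g,g^1,\dots,g^n)$ witnesses $g\in\textrm{Lip}-\gamma$ with $\|g\|_{\textrm{Lip}-\gamma}\le C\|f\|_{\textrm{Lip}-\gamma}\max(1,\|u\|^\gamma)$, which is the stated estimate in the $1$-compatible case intended by the hypothesis (a general $C$ simply multiplies the bound). There is no genuine obstacle: apart from the two tensor-functoriality identities for $u^{\otimes\bullet}$ and the trivial $n=0$ case, the proof is bookkeeping, and — it is worth noting — neither the projective nor the symmetric property of the norms is needed here, only compatibility.
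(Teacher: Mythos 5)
Your proof is correct and takes essentially the same route as the paper: same choices $g^k(x) = f^k(u(x))\circ u^{\otimes k}$ and $S_k(x,y) = R_k(u(x),u(y))\circ u^{\otimes k}$, the same functoriality identities for $u^{\otimes\bullet}$ to collapse the Taylor expansion, and the same recombination of $\|u\|^k$ and $\|u\|^{\gamma-k}$ into $\|u\|^\gamma$. Your explicit tracking of the compatibility constant $C$ is a worthwhile refinement of the bookkeeping: the stated bound $\|f\|_{\textrm{Lip}-\gamma}\max(1,\|u\|^\gamma)$ silently uses $1$-compatibility (cf.\ the paper's earlier remark that one may always renormalise to this case), which your presentation makes transparent.
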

	\begin{proof} Let $f^1,\ldots,f^n$ be maps defined on $F$ such that $(f,f^1,\ldots,f^n)$ is $\textrm{Lip}-\gamma$ and let $R_0,\ldots,R_n$ be the associated remainders.
	Let $g=f\circ u$ and, for $0\in[\![1,n]\!]$, let $g^k:E\to \mathcal{L}(E^{\otimes k},G)$ and $S_k:E\times E\to\mathcal{L}(E^{\otimes k},G)$ be the maps defined by:
	\[\forall x,y\in E, \forall v \in E^{\otimes k}: g^k(x)(v)=f^k(u(x))(u^{\otimes k}(v)), S_k(x,y)(v)=R_k(u(x),u(y))(u^{\otimes k}(v))\]
	Let $k\in[\![0,n]\!]$, $x,y\in E$ and $v \in E^{\otimes k}$. Then we have, using the previous definitions and the Taylor expansion of $f$:
	\[\begin{array}{rcl}
	g^k(x)(v)&=&f^k(u(x))(u^{\otimes k}(v))\\
			&=&\sum\limits_{j=k}^{n} f^j(u(y))\left(\frac{\displaystyle u^{\otimes k}(v)\otimes (u(x-y))^{\otimes j-k}}{\displaystyle (j-k)!}\right)+R_k(u(x),u(y))(u^{\otimes k}(v))\\
			&=&\sum\limits_{j=k}^{n} f^j(u(y))\left(u^{\otimes j}(\frac{\displaystyle v\otimes (x-y)^{\otimes(j-k)}}{\displaystyle(j-k)!})\right)+S_k(x,y)(v)\\
			&=&\sum\limits_{j=k}^{n} g^j(y)\left(\frac{\displaystyle v\otimes (x-y)^{\otimes(j-k)}}{\displaystyle(j-k)!}\right)+S_k(x,y)(v)\\
	\end{array}\]
	Moreover $\|g^k(x)\|\leq \|f\|_{\textrm{Lip}-\gamma}\|u\|^k$ and $\|S_k(x,y)\|\leq \|f\|_{\textrm{Lip}-\gamma}\|u\|^\gamma \|x-y\|^{\gamma -k}$. Hence, $(g,g^1,\ldots,g^n)$ is $\textrm{Lip}-\gamma$ (with $(S_0,\ldots, S_n)$ as remainders) and
	\[\begin{array}{rcl}
	\|g\|_{\textrm{Lip}-\gamma}&\leq& \max(\|f\|_{\textrm{Lip}-\gamma},\|f\|_{\textrm{Lip}-\gamma}\|u\|,\ldots,\|f\|_{\textrm{Lip}-\gamma}\|u\|^n,\|f\|_{\textrm{Lip}-\gamma}\|u\|^\gamma)\\
					&\leq&\|f\|_{\textrm{Lip}-\gamma} \max(1,\|u\|^{\gamma})\\
	\end{array}\]\end{proof}
	\begin{Rem} If $(E^{\otimes k})_{1 \leq k\leq \lfloor\gamma\rfloor}$ and $(F^{\otimes k})_{1 \leq k\leq \lfloor\gamma\rfloor}$ are not necessarily endowed with compatible norms, then $f\circ u$ is still $\textrm{Lip}-\gamma$ and:
	\[\|f\circ u\|_{\textrm{Lip}-\gamma} \leq \|f\|_{\textrm{Lip}-\gamma} \max_{0\leq k \leq \lfloor\gamma\rfloor} \|u^{\otimes k}\| (1\vee \|u\|^{\gamma-k})\]
	\end{Rem}
	\subsubsection{Formal derivatives}	
	Before proceeding to the proof that the composition of two Lipschitz maps is indeed Lipschitz, we will need a few combinatorial results along with the identification of higher derivatives of the composition of two maps and a general recursive criterion for a map to be Lipschitz. The contents of this subsection will only be useful to us to obtain the results of the next one, i.e. to show that the composition of two Lipschitz maps is indeed Lipschitz.\\
	Let $E$ and $F$ be two vector spaces. let $f:U \to F$ be a map defined on a subset $U$ of $E$ and for every $k \in [\![1,n]\!]$, $n$ being a positive integer, let  $f^k:U\to \mathcal{L}_s(E^{\otimes k},F)$ be a map. Here, the $f^k$'s play \emph{formally} the role of the $k$\textsuperscript{th} derivative of $f^0$. As, for $k\in [\![1,n]\!]$, $f^k(x)$ is a symmetric map for every $x\in U$, we can identify $f^k$ and $(f^1)^{k-1}$ in the following natural way. For every $x\in U$ and $v_1,\ldots, v_{k+1} \in E$:
	\[f^k(x)(v_1\otimes\cdots\otimes v_{k+1})=(f^1)^{k-1}(x)(v_1\otimes\cdots\otimes v_{k})(v_{k+1})\]
	From this identification, we can easily see $(f^1, (f^1)^1, \ldots, (f^1)^{n-1})$ as a $\textrm{Lip}-(n+\varepsilon-1)$ map with norm upper-bounded by that of $f$:
	\begin{lemma}\label{RecursiveLipCtrl} Let $n\in \mathbb{N}^*$ and $0< \varepsilon\leq 1$. Let $E$ and $F$ be two normed vector spaces and $U$ be a subset of $E$. let $f:U \to F$ be a map and for every $k \in [\![1,n]\!]$, let $f^k:U\to \mathcal{L}_s(E^{\otimes k},F)$ be a map with values in the space of the symmetric $k$-linear mappings from $E$ to $F$. Denote by $R_0:E\times E\to F$ the remainder of order $0$ associated to the collection $f=(f^0,f^1,\ldots,f^n)$. 
	\begin{itemize}
	\item If $f$ is Lipschitz-$(n+\varepsilon)$ on $U$ and there exists $a>0$ such that:
	\[\forall k\in [\![0,n-1]\!], \forall v\in E^{\otimes k}, \forall \tilde{v}\in E: \quad \|v \otimes \tilde{v}\|\leq a \|v\|\|\tilde{v}\|\]
	then $(f^1, (f^1)^1, \ldots, (f^1)^{n-1})$ is $\textrm{Lip}-(n+\varepsilon-1)$ and:
	\[\|f^1\|_{\textrm{Lip}-(n+\varepsilon-1)} \leq a \|f\|_{\textrm{Lip}-(n+\varepsilon)}\]
	\item If:
		\begin{enumerate}
		\item $(f^1, (f^1)^1, \ldots, (f^1)^{n-1})$ is $\textrm{Lip}-(n+\varepsilon-1)$.
		\item There exists a constant $M$ such that:
	\[\forall x,y \in U:\quad \|R_0(x,y)\|\leq M\|x-y\|^{n+\varepsilon}\]
	(Denote by $\|R_0\|_{\infty}$ the smallest value for such a constant $M$.)
		\item $f^0$ is bounded.
		\item There exists $b>0$ such that:
	\[\forall k\in [\![0,n-1]\!], \forall v\in E^{\otimes k}, \forall \tilde{v}\in E: \quad  \|v\|\|\tilde{v}\| \leq b \|v \otimes \tilde{v}\|\]
		\end{enumerate}
	then $f$ is Lipschitz-$(n+\varepsilon)$ on $U$ and:
		\[\|f\|_{\textrm{Lip}-(n+\varepsilon)}\leq \max{(b\|f^1\|_{\textrm{Lip}-(n+\varepsilon-1)},\|R_0\|_{\infty},\|f^0\|_{\infty})}\]
	\end{itemize}
	\end{lemma}	
	\begin{Rem}\label{FiniteDimIsometryTensorNorm} In the finite-dimensional case, the conditions on the norms stated in lemma \ref{RecursiveLipCtrl} are not an issue and can even be obtained with constants $a=b=1$ for some of the examples of norms provided in the subsection \ref{SectionDefinitionNorms}.
	\end{Rem}
	\begin{Def} Let $E$, $F$, $G$ and $H$ be normed vector spaces and $U$ be a subset of $E$. Let $n\in \mathbb{N}^*$. Let $f:U \to F$ and $g:U \to G$ be two maps and for every $k \in [\![1,n]\!]$, let $f^k:U\to \mathcal{L}_s(E^{\otimes k},F)$  and $g^k:U\to \mathcal{L}_s(E^{\otimes k},G)$ be any two maps. Let $B: F\times G\to H$ a bilinear map. For $k\in[\![1,n]\!]$,  we call the $k$\textsuperscript{th} bilinear derivative of the map $B(f,g)$ (where $f$ and $g$ are identified with the collections $(f,f^1,\ldots,f^n)$ and $(g,g^1,\ldots,g^n)$ respectively), the map defined on $U$ with values in $\mathcal{L}_s(E^{\otimes k},H)$ obtained by formally differentiating $k$ times the map $B(f,g)$ , i.e. for $x\in U$ and  $v\in E^{\otimes k}$:
	\[ B(f,g)^k(x)(v)=\underset{\sigma\in\mathcal{S}_k}{\underset{i\in[\![0,k]\!]}\sum}{\frac{B(f^i(x),g^{k-i}(x))}{i!(k-i)!}\sigma(v)}\]
	\end{Def}
	
	In the above definition, for $i,j\in[\![0,n]\!]$, $x,y \in U$, $B(f^i(x),g^{j}(y))$ is understood to be the unique linear map defined on $E^{\otimes(i+j)}$ by the following:
	\[\forall v_1,\ldots,v_{i+j} \in E: B(f^i(x),g^{j}(y))(v_1\otimes\cdots\otimes v_{i+j})=B(f^i(x)(v_1\otimes\cdots\otimes v_{i}),g^{j}(y)(v_{i+1}\otimes\cdots\otimes v_{i+j}))\]
	We check first that this definition is stable under successive derivations and compatible with the identification between the derivatives of a map and those of its first derivative. This is essential if we are to use an induction argument to show that the image of Lipschitz maps by a bilinear map is also Lipschitz.
	
	\begin{Prop}\label{IDFormalDeriBilinear} Let $n\in \mathbb{N}^*$. Let $E$, $F$, $G$ and $H$ be normed vector spaces and $U$ be a subset of $E$. Let $f:U \to F$ and $g:U \to G$ be two maps and for every $k \in [\![1,n]\!]$, let $f^k:U\to \mathcal{L}_s(E^{\otimes k},F)$  and $g^k:U\to \mathcal{L}_s(E^{\otimes k},G)$ be any two maps. Let $B: F\times G\to H$ a bilinear map. Then, for $k \in [\![1,n]\!]$, the $k$\textsuperscript{th} bilinear derivative of the map $B(f,g)$ can be identified with the $(k-1)$\textsuperscript{th} bilinear derivative of the $1$\textsuperscript{st} bilinear derivative of the map $B(f,g)$.
	\end{Prop}
	\begin{proof}
	For shorter formulas, let us denote $Z=B(f,g)$. Let $k\in [\![0,n-1]\!]$ and let us make the identification between $(Z^1)^k$ and $Z^{k+1}$. $Z^1$ can be written as a sum of the image by bilinear maps of maps defined on $U$ and therefore $(Z^1)^k$ can be defined using formal bilinear derivation. More precisely, for $x\in U$ and $v\in E^{\otimes k}$:
	\[(Z^1)^k(x)(v)=\underset{\sigma\in\mathcal{S}_k}{\underset{i\in[\![0,k]\!]}\sum}{\frac{B((f^1)^i(x),g^{k-i}(x))+B(f^i(x),(g^1)^{k-i}(x))}{i!(k-i)!}\sigma(v)}\]
	Let $v_1,\ldots,v_k,v_{k+1}\in E$ and define: $v=v_1\otimes\cdots\otimes v_k$. Studying the position of $v_{k+1}$ in $Z^{k+1}(x)(v\otimes v_{k+1})$, we are naturally led into dividing the sum into the two following parts:
	\[\begin{array}{rcl}
	Z^{k+1}(x)(v\otimes v_{k+1})&=&\underset{\sigma\in\mathcal{S}_{k+1}}{\underset{i\in[\![0,k+1]\!]}\sum}{\frac{B(f^i(x)(\sigma^{1,i}(v\otimes v_{k+1})),g^{k+1-i}(x)(\sigma^{2,i}(v\otimes v_{k+1})))}{i!(k+1-i)!}}\\
	&=&\sum\limits_{i=1}^{k+1}{\left(\underset{\sigma^{-1}(k+1)\leq i}{\underset{\sigma\in\mathcal{S}_{k+1}}\sum}{\frac{B(f^i(x)(\sigma^{1,i}(v\otimes v_{k+1})),g^{k+1-i}(x)(\sigma^{2,i}(v\otimes v_{k+1})))}{i!(k+1-i)!}}\right)}+\\
	&&\sum\limits_{i=0}^{k}{\left(\underset{\sigma^{-1}(k+1)>i}{\underset{\sigma\in\mathcal{S}_{k+1}}\sum}{\frac{B(f^i(x)(\sigma^{1,i}(v\otimes v_{k+1})),g^{k+1-i}(x)(\sigma^{2,i}(v\otimes v_{k+1})))}{i!(k+1-i)!}}\right)}\\
	\end{array}\]
	For every $\sigma \in\mathcal{S}_{k+1}$ let $\tau_\sigma\in\mathcal{S}_{k}$ be defined as follows:
	\[\tau_{\sigma}(j)
	=\left\{\begin{array}{ll}
	\sigma(j)&,\textrm{ if } j<\sigma^{-1}(k+1)\\
	\sigma(j+1)&,\textrm{ if } j\geq\sigma^{-1}(k+1)\\	
	\end{array}\right.
	\]
	The map $\sigma \mapsto \tau_\sigma$ is surjective and for each $\tau\in\mathcal{S}_{k}$, there exists exactly $(k+1)$ elements $\sigma \in\mathcal{S}_{k+1}$ such that $\tau=\tau_\sigma$. More precisely, for $i\in[\![0,k+1]\!]$ and $\tau\in\mathcal{S}_{k}$:
	\[\textrm{card}\{\sigma \in\mathcal{S}_{k+1}: \tau=\tau_\sigma, \sigma^{-1}(k+1)\leq i\}=i\]	
	and
	\[\textrm{card}\{\sigma \in\mathcal{S}_{k+1}: \tau=\tau_\sigma, \sigma^{-1}(k+1)> i\}=k+1-i\]
	Let $i\in[\![1,k+1]\!]$. Since $f^i(x)$ is symmetric then, for every  $\sigma\in\mathcal{S}_{k+1}$ such that $\sigma^{-1}(k+1)\leq i$, we have:
	\[	f^i(x)(\sigma^{1,i}(v\otimes v_{k+1}))=f^i(x)(\tau_\sigma^{1,i-1}(v)\otimes v_{k+1})\]
	which gives:
	\[\begin{array}{l}
	\underset{\sigma^{-1}(k+1)\leq i}{\underset{\sigma\in\mathcal{S}_{k+1}}\sum}{\displaystyle\frac{B(f^i(x)(\sigma^{1,i}(v\otimes v_{k+1})),g^{k+1-i}(x)(\sigma^{2,i}(v\otimes v_{k+1})))}{i!(k+1-i)!}}=\\
	{\underset{\tau\in\mathcal{S}_{k}}\sum}{\displaystyle\frac{B((f^1)^{i-1}(x)(\tau^{1,i-1}(v))(v_{k+1}),g^{k+1-i}(x)(\tau^{2,i-1}(v)))}{(i-1)!(k+1-i)!}}
		\end{array}\]
	Summing over all $i\in[\![1,k+1]\!]$:
	\[\begin{array}{rl}
	&\sum\limits_{i=1}^{k+1}\underset{\sigma^{-1}(k+1)\leq i}{\underset{\sigma\in\mathcal{S}_{k+1}}\sum}{\displaystyle\frac{B(f^i(x)(\sigma^{1,i}(v\otimes v_{k+1})),g^{k+1-i}(x)(\sigma^{2,i}(v\otimes v_{k+1})))}{i!(k+1-i)!}}\\
	=&\sum\limits_{j=0}^{k}{\underset{\tau\in\mathcal{S}_{k}}\sum}{\displaystyle\frac{B((f^1)^{j}(x)(.)(v_{k+1}),g^{k-j}(x))}{j!(k-j)!}}(\tau(v))
		\end{array}\]	
	We deal with the other term by using a similar idea. We finally get that $Z^{k+1}(x)(v\otimes v_{k+1})$ is equal to:
	\[\sum\limits_{j=0}^{k}{\underset{\tau\in\mathcal{S}_{k}}\sum}{\displaystyle\frac{B((f^1)^{j}(x)(.)(v_{k+1}),g^{k-j}(x))+B(f^{j}(x),(g^1)^{k-j}(x)(.)(v_{k+1}))}{j!(k-j)!}}(\tau(v))\]
	Which is exactly $(Z^1)^k(x)(v)(v_{k+1})$.
	\end{proof}
	\begin{Def} Let $n\in \mathbb{N}^*$. Let $E$, $F$ and $G$ be three normed vector spaces. Let $U$ be a subset of $E$ and $V$ be a subset of $F$. Let $f:U \to F$ and $g:V \to G$ be two maps such that $f(U) \subseteq V$; and for every $k \in [\![1,n]\!]$, let $f^k:U\to \mathcal{L}_s(E^{\otimes k},F)$  and $g^k:V\to \mathcal{L}_s(F^{\otimes k},G)$ be any two maps. For $k\in [\![1,n]\!]$, we call the $k$\textsuperscript{th} chain rule derivative of the composition $g\circ f$ (where $f$ and $g$ are identified with the collections $(f,f^1,\ldots,f^n)$ and $(g,g^1,\ldots,g^n)$ respectively), the map defined on $U$ with values in $\mathcal{L}_s(E^{\otimes k},G)$ obtained by formally applying the chain rule on $g\circ f$, i.e. for every $y\in U$, and $v\in E^{\otimes k}$, $(g\circ f)^k(y)(v)$ is given by the following formula:
	\[(g\circ f)^k(x)(v)=\sum\limits_{j=1}^k \frac{g^{j}(f(x))}{j!} \underset{\underset{i_1+\cdots+i_j=k}{1\leq i_1,\ldots,i_j\leq n}}\sum \frac{f^{i_1}(x)\otimes \cdots\otimes f^{i_j}(x)}{i_1!\cdots i_j!}\left(\sum\limits_{\sigma\in\mathcal{S}_k}\sigma(v)\right)\]
	\end{Def}
	\begin{example} Given a Lipschitz map $f$ and by naturally identifying a linear map $u$ with the collection $(u,x\mapsto u, 0,\ldots,0)$, we note that the successive chain rule derivatives correspond to the suggested representation of $u\circ f$ and $f\circ u$ as Lipschitz maps in the proofs of propositions \ref{Compolinfunc} and \ref{Compolinfunc2} respectively.
	\end{example}
	\begin{Notation} We introduce the symbol $\overset{S}=$ to say that two expressions have the same symmetric part (in the appropriate framework).
	\end{Notation}
	We will need a couple of combinatorial results before we can proceed:
	\begin{lemma}\label{SigmaReasoning1}  Let $n\in \mathbb{N}^*$. Let $E$ and $F$ be two normed vector spaces and for every $k \in [\![1,n]\!]$, let $f^k \in\mathcal{L}_s(E^{\otimes k},F)$. Let $k\in [\![1,n-1]\!]$ and $p\in[\![2,k+1]\!]$, then, for all $v_1,\ldots,v_{k+1} \in E$:
	\[A(k,p):=\frac{1}{p}\underset{\underset{m_1+\cdots+m_p=k+1}{1\leq m_1,\ldots,m_p\leq n}}\sum 
	\frac{f^{m_1}\otimes \cdots\otimes f^{m_p}}{m_1!\cdots m_p!}\sum\limits_{\sigma\in\mathcal{S}_{k+1}}v_{\sigma(1)}\otimes\cdots\otimes v_{\sigma(k+1)}\]
	and
	\[
	B(k,p):=\sum\limits_{i=p-1}^k
	\underset{\underset{m_1+\cdots+m_{p-1}=i}{1\leq m_1,\ldots,m_{p-1}\leq n}}\sum 
	\frac{f^{m_1}\otimes \cdots\otimes f^{m_{p-1}}\otimes f^{k-i+1}}{m_1!\cdots m_{p-1}!(k-i)!}
	{\sum\limits_{\sigma\in\mathcal{S}_k}}v_{\sigma(1)}\otimes\cdots\otimes  v_{\sigma(k)}\otimes v_{k+1}\]
	have the same symmetric parts (i.e. $A(k,p)\overset{S}=B(k,p)$).
	\end{lemma}
	\begin{proof} Define:
	\[I=\{(m_1,\ldots,m_p,\sigma)|\quad 1\leq m_1,\ldots,m_p\leq n, m_1+\cdots+m_p=k+1, \sigma \in \mathcal{S}_{k+1}\}\]
	For $i\in [\![1,p]\!]$ and $m\in [\![1,k-p+2]\!]$, define $J_{i,m}$ as being the set:
	\[
	J_{i,m}:=\left\{(m_1,\ldots,m_p,\sigma) \in I |\quad  m_i=m, \sum_{j=1}^{i-1}m_j +1 \leq \sigma^{-1}(k+1) \leq \sum_{j=1}^{i}m_j \right\}
	\]
	and for $r\in[\![1,m]\!]$, we define:
	\[J_{i,m}^r=
	\left\{(m_1,\ldots,m_p,\sigma) \in J_{i,m} |\quad  \sigma^{-1}(k+1) = \sum_{j=1}^{i-1}m_j+r \right\}
	\]
	It is clear that the sets  $(J_{i,m})_{1\leq i \leq p,1\leq m\leq k-p+ 2}$ form a partition of $I$. \\
	Let $i\in [\![1,p]\!]$ and $m\in [\![1,k-p+2]\!]$. Let $(m_1,\ldots,m_p,\sigma) \in J_{i,m}$. Define $\eta_{\sigma} \in \mathcal{S}_k$ as follows: 
	\[\left\{\begin{array}{ll}
	\eta_{\sigma}(r)=\sigma(r)&, \forall r\in[\![1,\sum_{1}^{i-1}m_j]\!]\\
	\eta_{\sigma}(r)=\sigma(r+m)&, \forall r\in[\![\sum_{1}^{i-1}m_j+1,k+1-m]\!]\\
	\eta_{\sigma}(r)=\sigma(r-\sum_{i+1}^{p}m_j)&, \forall r\in[\![k+2-m,k-\sum_{1}^{i}m_j+\sigma^{-1}(k+1)]\!]\\
	\eta_{\sigma}(r)=\sigma(r-\sum_{i+1}^{p}m_j+1)&, \forall r\in[\![k-\sum_{1}^{i}m_j+\sigma^{-1}(k+1)+1,k]\!]\\
	\end{array}\right.
	\]	
	i.e.
	\[
	\begin{array}{ll}
	\eta_{\sigma}=&(\sigma(1), \ldots, \sigma(\sum_{1}^{i-1}m_j),
										\sigma(\sum_{1}^{i}m_j+1),\ldots,\sigma(k+1),\\
										&\sigma(\sum_{1}^{i-1}m_j+1), \ldots, \sigma(\sigma^{-1}(k+1)-1),
										\sigma(\sigma^{-1}(k+1)+1), \ldots, \sigma(\sum_{1}^{i}m_j))\\
	\end{array}\]
	then, as $ f^{m}$ is symmetric, we have:
	\[\begin{array}{rl}
	&f^{m_1}\otimes \cdots\otimes f^{m_p} (v_{\sigma(1)}\otimes\cdots\otimes v_{\sigma(k+1)})\\
	\overset{S}=&
	f^{m_1}\otimes \cdots\otimes f^{m_{i-1}}\otimes f^{m_{i+1}}\otimes\cdots\otimes f^{m_p}\otimes f^{m}
	(v_{\eta_{\sigma}(1)}\otimes\cdots\otimes v_{\eta_{\sigma}(k)} \otimes v_{k+1})\\
	\end{array}\]
	Let $r\in[\![1,m]\!]$. As the map:
	\[\begin{array}{rlcl}
	\varphi_{i,m}^r:&J_{i,m}^{r}&\rightarrow&\{(m_1,\ldots,m_{p-1})\in[\![1,n]\!]^{(p-1)}, \sum_1^{p-1} m_l=k+1-m\}\times \mathcal{S}_{k}\\
	&(m_1,\ldots,m_p,\sigma)&\mapsto&((m_1,\ldots,m_{i-1},m_{i+1},\ldots,m_{p}),\eta_{\sigma})\\
	\end{array}
	\]
	is bijective, we have, by virtue of the identity above:
	\[\begin{array}{rl}
	& \underset{(m_1,\ldots,m_p,\sigma) \in J_{i,m}^r}\sum 
	f^{m_1}\otimes \cdots\otimes f^{m_p}( v_{\sigma(1)}\otimes\cdots\otimes v_{\sigma(k+1)})\\
	\overset{S}=&
	\underset{\underset{m_1+\cdots+m_{p-1}=k+1-m}{1\leq m_1,\ldots,m_{p-1}\leq n}}\sum 	f^{m_1} \otimes\cdots\otimes f^{m_{p-1}}\otimes f^{m}
	({\sum\limits_{\sigma\in\mathcal{S}_k}} v_{\sigma(1)}\otimes\cdots\otimes  v_{\sigma(k)}\otimes v_{k+1})\\
	\end{array}\]	
	The sets $(J_{i,m}^r)_{1\leq r \leq m}$ form a partition of $J_{i,m}$. Therefore:
	\[\begin{array}{rl}
	& \underset{(m_1,\ldots,m_p,\sigma) \in J_{i,m}}\sum 
	f^{m_1}\otimes \cdots\otimes f^{m_p} (v_{\sigma(1)}\otimes\cdots\otimes v_{\sigma(k+1)})\\
	\overset{S}=&m 
	\underset{\underset{m_1+\cdots+m_{p-1}=k+1-m}{1\leq m_1,\ldots,m_{p-1}\leq n}}\sum 	f^{m_1} \otimes\cdots\otimes f^{m_{p-1}}\otimes f^{m}
	(\sum\limits_{\sigma\in\mathcal{S}_k} v_{\sigma(1)}\otimes\cdots\otimes  v_{\sigma(k)}\otimes v_{k+1})\\
	\end{array}\]
	which finally gives:
	\[\begin{array}{rl}
	A(k,p)\overset{S}=&
	\frac{1}{p}\sum\limits_{i=1}^p\sum\limits_{m=1}^{k-p+2} m 
	\underset{\underset{m_1+\cdots+m_{p-1}=k+1-m}{1\leq m_1,\ldots,m_{p-1}\leq n}}\sum 
	\frac{f^{m_1} \otimes\cdots\otimes f^{m_{p-1}}\otimes f^{m}}{m_1!\cdots m_{p-1}!m!}
	(\sum\limits_{\sigma\in\mathcal{S}_k} v_{\sigma(1)}\otimes\cdots\otimes  v_{\sigma(k)}\otimes v_{k+1})\\
	=&
	\sum\limits_{m=1}^{k-p+2} 
	\underset{\underset{m_1+\cdots+m_{p-1}=k+1-m}{1\leq m_1,\ldots,m_{p-1}\leq n}}\sum 
	\frac{f^{m_1} \otimes\cdots\otimes f^{m_{p-1}}\otimes f^{m}}{m_1!\cdots m_{p-1}!(m-1)!}
	(\sum\limits_{\sigma\in\mathcal{S}_k} v_{\sigma(1)}\otimes\cdots\otimes  v_{\sigma(k)}\otimes v_{k+1})\\
	=&B(k,p)\\
	\end{array}\]
		\end{proof}
		
	\begin{Notation} For any finite set $\{\alpha_1,\ldots,\alpha_r\}$, $\mathcal{S}_{\alpha_1,\ldots,\alpha_r}$ denotes the set of all bijections from $\{\alpha_1,\ldots,\alpha_r\}$ onto itself.
	\end{Notation}	
	
	\begin{lemma}\label{SigmaReasoning2}
	Let $k\in\mathbb{N}^*$, $1\leq i \leq k$ and $v_1,\ldots v_k$ be any letters. Then:
	\[i!{\sum\limits_{\sigma\in\mathcal{S}_k}}v_{\sigma(1)}\otimes\cdots\otimes  v_{\sigma(k)}=
	{\underset{\tau\in\mathcal{S}_{\sigma(1),\ldots,\sigma(i)}}{\sum\limits_{\sigma\in\mathcal{S}_k}}}v_{\tau(\sigma(1))}\otimes\cdots\otimes v_{\tau(\sigma(i))}\otimes v_{\sigma(i+1)}\otimes\cdots\otimes v_{\sigma(k)}
	\]
	\end{lemma}
	\begin{proof} Define the set:
	\[\mathcal{S}^{(i)} =\{(\sigma, \tau)| \sigma \in \mathcal{S}_{k}, \tau\in\mathcal{S}_{\sigma(1),\ldots,\sigma(i)}\}\]
	and the map: 
	\[\begin{array}{rccl}
	\mathcal{J}:& \mathcal{S}^{(i)} & \longrightarrow & \mathcal{S}_k \\
				& (\sigma, \tau) & \longrightarrow & (\tau(\sigma(1)),\ldots,\tau(\sigma(i)),\sigma(i+1),\ldots, \sigma(k))\\
	\end{array}
	\]
	Then $\mathcal{J}$ is well-defined, surjective, and for every $ \sigma \in \mathcal{S}_{k}$, $\mathrm{card}\,\mathcal{J}^{-1}(\{\sigma\})= i!$. The rest of the proof follows immediately.
	\end{proof}
	
	We check now that the chain rule derivation is homogeneous with the bilinear derivation:
	
	\begin{lemma}\label{HighDerivAgree} Let $n\in \mathbb{N}^*$. Let $E$, $F$ and $G$ be three normed vector spaces. Let $U$ be a subset of $E$ and $V$ be a subset of $F$. Let $f:U \to F$ and $g:V \to G$ be two maps such that $f(U) \subseteq V$; and for every $k \in [\![1,n]\!]$, let $f^k:U\to \mathcal{L}_s(E^{\otimes k},F)$  and $g^k:V\to \mathcal{L}_s(F^{\otimes k},G)$ be any two maps. Let $k\in [\![1,n]\!]$. Then the $k$\textsuperscript{th} chain rule derivative of $g\circ f$ and the $(k-1)$\textsuperscript{th} bilinear derivative of $(g\circ f)^1$ agree.
	\end{lemma}
	\begin{proof}
	We first write $(g\circ f)^1$ as the bilinear image of two maps: $(g\circ f)^1=\psi(g^1 \circ f^0,f^1)$; where:
	\[\begin{array}{rccc}
	\psi:&\mathcal{L}_c(F,G)\times\mathcal{L}_c(E,F)&\to&\mathcal{L}_c(E,G)\\
		&	(v,u)								&\mapsto& v\circ u\\
	\end{array}\]
	Let $k\in [\![1,n-1]\!]$. Differentiating $(g\circ f)^1$ formally $k$ times gives the following formula for $x\in U$ and $v\in E^{\otimes k}$:
	\begin{equation}\label{eq:BiDerivativeForCompo}
	((g\circ f)^1)^k(x)(v)=\underset{\sigma\in\mathcal{S}_k}{\underset{i\in[\![0,k]\!]}\sum}{\frac{\psi((g^1\circ f)^i(x),(f^1)^{k-i}(x))}{i!(k-i)!}\sigma(v)}
	\end{equation}	 
	For $i\in[\![1,k]\!]$, the $i$\textsuperscript{th} chain rule derivative of $g^1\circ f$ defines, for $x\in U$ and $w\in E^{\otimes i}$, $(g^1\circ f)^i(x)(w)$ as the sum:
	\begin{equation}\label{eq:CompoDerivativeForCompo}
	(g^1\circ f)^i(x)(w)=\sum\limits_{j=1}^i \frac{(g^1)^j(f(x))}{j!} \underset{\underset{m_1+\cdots+m_j=i}{1\leq m_1,\ldots,m_j\leq n}}\sum \frac{f^{m_1}(x)\otimes \cdots\otimes f^{m_j}(x)}{m_1!\cdots m_j!}(\sum\limits_{\tau\in\mathcal{S}_i}(\tau(w)))
	\end{equation}	
	For $x\in U$ and $v_1,\ldots v_k,v_{k+1}\in E$  (we denote $v=v_1\otimes\cdots \otimes v_k$), equations (\ref{eq:BiDerivativeForCompo}) and (\ref{eq:CompoDerivativeForCompo}) identifiy  $((g\circ f)^1)^k(x)(v)(v_{k+1})$ as the sum:
	\[\begin{array}{l}
	(g^1(f(x))(f^{k+1}(x)(v\otimes v_{k+1}))+\\
	 \sum\limits_{1\leq j \leq i \leq k}
	\frac{g^{j+1}(f(x))}{j!i!(k-i)!} 
	\underset{\underset{m_1+\cdots+m_j=i}{1\leq m_1,\ldots,m_j\leq n}}\sum 
	\frac{f^{m_1}(x)\otimes \cdots\otimes f^{m_j}(x)\otimes f^{k-i+1}(x)}{m_1!\cdots m_j!}
	\left( {\underset{\tau\in\mathcal{S}_{\sigma(1),\ldots,\sigma(i)}}{\sum\limits_{\sigma\in\mathcal{S}_k}}}
	\tau(\sigma^{1,i}(v))\otimes \sigma^{2,i}(v)\otimes v_{k+1} \right) \\
	\end{array}\]
	which reads by re-indexing its terms:
	\[\begin{array}{l}
	(g^1(f(x))(f^{k+1}(x)(v\otimes v_{k+1}))+\\
	\underset{\underset{p-1\leq i \leq k}{2\leq p \leq k+1}}\sum 
	\frac{g^{p}(f(x))}{(p-1)!i!(k-i)!} 
	\underset{\underset{m_1+\cdots+m_{p-1}=i}{1\leq m_1,\ldots,m_{p-1}\leq n}}\sum 
	\frac{f^{m_1}(x)\otimes \cdots\otimes f^{m_{p-1}}(x)\otimes f^{k-i+1}(x)}{m_1!\cdots m_{p-1}!}\left( {\underset{\tau\in\mathcal{S}_{\sigma(1),\ldots,\sigma(i)}}{\sum\limits_{\sigma\in\mathcal{S}_k}}}
	\tau(\sigma^{1,i}(v))\otimes \sigma^{2,i}(v)\otimes v_{k+1} \right) \\
	\end{array}\]	
	
	To finish the identification between $((g\circ f)^1)^k(x)$ and $(g\circ f)^{k+1}(x)$, it is sufficient to prove the following identity, for $p\in[\![2,k+1]\!]$:
	\[\begin{array}{ll}
	\frac{g^{p}(f(x))}{p!}\underset{\underset{m_1+\cdots+m_p=k+1}{1\leq m_1,\ldots,m_p\leq n+1}}\sum 
	\frac{f^{m_1}(x)\otimes \cdots\otimes f^{m_p}(x)}{m_1!\cdots m_p!}\sum\limits_{\sigma\in\mathcal{S}_{k+1}}v_{\sigma(1)}\otimes\cdots\otimes v_{\sigma(k+1)}= 	\\
	\frac{g^{p}(f(x))}{(p-1)!} 
	\sum\limits_{i=p-1}^k
	\underset{\underset{m_1+\cdots+m_{p-1}=i}{1\leq m_1,\ldots,m_{p-1}\leq n}}\sum 
	\frac{f^{m_1}(x)\otimes \cdots\otimes f^{m_{p-1}}(x)\otimes f^{k-i+1}(x)}{m_1!\cdots m_{p-1}!i!(k-i)!}&\\
	{\underset{\tau\in\mathcal{S}_{\sigma(1),\ldots,\sigma(i)}}{\sum\limits_{\sigma\in\mathcal{S}_k}}}v_{\tau(\sigma(1))}\otimes\cdots\otimes v_{\tau(\sigma(i))}\otimes v_{\sigma(i+1)}\otimes\cdots\otimes v_{\sigma(k)}\otimes v_{k+1}&\\
	\end{array}\]
	Which is a straightforward consequence of the lemmas \ref{SigmaReasoning1} and \ref{SigmaReasoning2} combined with the symmetric property of $g^{p}$.
	\end{proof}
	\subsubsection{General case}
	\textbf{In the remainder of this paper}, we will assume that the base space (almost always denoted $E$ below) and its successive tensors are endowed with norms satisfying the following property: 
	\[\forall k , \forall v\in E^{\otimes k}, \forall \tilde{v}\in E: \quad \|v \otimes \tilde{v}\|= \|v\|\|\tilde{v}\|\]
	Following remark \ref{FiniteDimIsometryTensorNorm}, this can always be satisfied if $E$ is finite dimensional. The results of this section remain true however if looser conditions on these norms (as in lemma \ref{RecursiveLipCtrl}) hold. We now proceed to showing that the image of a product of Lipschitz maps by a bilinear map is also Lipschitz:	
	\begin{Prop}\label{Compobifunc} Let $E$, $F$, $G$ and $H$ be normed vector spaces and $U$ be a subset of $E$. Let $\gamma>0$ and let $f:U \to F$ and $g:U \to G$ be two $\textrm{Lip}-\gamma$ maps. Let $B: F\times G\to H$ a continuous bilinear map. We assume that $(E^{\otimes k})_{k\geq 1}$ are endowed with norms satisfying the projective property. Then $B(f,g):U \to H$ (when endowed with its bilinear derivatives up to order $\lfloor \gamma \rfloor$) is $\textrm{Lip}-\gamma$ and there exists a constant $C$ (depending only on $\gamma$) such that:
	\[\|B(f,g)\|_{\textrm{Lip}-\gamma} \leq C\|B\|\|f\|_{\textrm{Lip}-\gamma}\|g\|_{\textrm{Lip}-\gamma}\]
	\end{Prop}
	 The idea behind the proof is rather simple but contains notions and ideas that will be very important to the proof of the main theorem of this section.
	\begin{proof} Let $\varepsilon\in (0,1]$. We prove our statement by induction on $n$ $(n=\lfloor \gamma \rfloor, \gamma= n + \varepsilon)$. For $n=0$, the proof of the statement is trivial and is left as an exercise. Let $n\in\mathbb{N}$. We assume the statement true for $n$ and let us prove it for $n+1$.\\
	Let $E$, $F$, $G$ and $H$ be normed vector spaces and $U$ be a subset of $E$. Let $f:U \to F$ and $g:U \to G$ be two $\textrm{Lip}-(n+1+\varepsilon)$ maps and $B: F\times G\to H$ be a continuous bilinear map. We will show that $(Z,Z^1,\ldots,Z^{n+1})$ is $\textrm{Lip}-(n+1+\varepsilon)$ where, $Z:=B(f,g)$ and for $k\in[\![1,n+1]\!]$, $Z^k$ is the $k$\textsuperscript{th} bilinear derivative of $Z$.\\
	First, we prove that $Z^1$ is $\textrm{Lip}-(n+\varepsilon)$ and with a well bounded $\textrm{Lip}-(n+\varepsilon)$ norm. As $f^1$ and $g$ (resp. $f$ and $g^1$) are both $\textrm{Lip}-(n+\varepsilon)$, then, by the induction hypothesis, $B(f^1,g)$ (resp. $B(f,g^1)$) is $\textrm{Lip}-(n+\varepsilon)$ when endowed with its bilinear derivatives. Hence, $(Z^1,(Z^1)^1,\ldots,(Z^1)^n)$ is $\textrm{Lip}-(n+\varepsilon)$ and (using the induction hypothesis) there exists a constant $c_{n,\varepsilon}$ such that:
	\[\|Z^1\|_{\textrm{Lip}-(n+\varepsilon)} \leq c_{n,\varepsilon}\|B\|\|f\|_{\textrm{Lip}-(n+1+\varepsilon)}\|g\|_{\textrm{Lip}-(n+1+\varepsilon)}\]
	Moreover, by proposition \ref{IDFormalDeriBilinear}, for all $k\in[\![0,n]\!]$: $(Z^1)^k=Z^{k+1}$.\\
	For $k\in[\![0,n+1]\!]$, let $R_k$ (resp. $S_k$) be the remainder of order $k$ associated to $f$ (resp. $g$). Let $z\in U$ and let $x,y\in B(z,1/2)\cap U$. Writing the Taylor expansion of $f$ and $g$ and using the bilinearity of $B$, we get:
	\[Z(x)=\sum\limits_{i=0}^{n+1}{Z^i(y)\left(\frac{(x-y)^{\otimes i}}{i!}\right)}+T_0(x,y)\]
	where:
	\[\begin{array}{rcl}
	T_0(x,y)&=&B(R_0(x,y),g(x))+B(f(x)-R_0(x,y),S_0(x,y))+ \\
		&&\underset{i+j>n+1}{\underset{i,j\in[\![0,n+1]\!]}\sum}{B(f^i(y),g^j(y))\left(\frac{(x-y)^{\otimes (i+j)}}{i!j!}\right)}\\
	\end{array}\]
	It is obvious that we can bound $Z(x)$ and $T_0(x,y)$ independently of $z$ and in the form suggested by the statement of the proposition. Hence, by lemma \ref{RecursiveLipCtrl}, $Z$ is $\textrm{Lip}-(n+\varepsilon+1)$ over $B(z,1/2)\cap U$ and therefore, by lemma \ref{LocalLipGen}, it is $\textrm{Lip}-(n+\varepsilon+1)$ over $ U$ with the suggested control of the Lipschitz norm.
	\end{proof}
	
	\begin{Rem}
	By proposition \ref{Compobifunc}, the real-valued Lip-$\gamma$ functions form an algebra under point-wise multiplication.
	\end{Rem}
	
	\begin{Rem} If $E\otimes F$ is endowed with a norm satisfying the projective property, then the tensor product of an $E$-valued Lipschitz map by an $F$-valued Lipschitz map is also Lipschitz as a direct consequence of proposition \ref{Compobifunc}.
	\end{Rem}
	
	Using proposition \ref{Compolinfunc2} for example, one may argue that we don't need both maps $u$ and $f$ to be Lipschitz in order for their composition to be Lipschitz too. In particular, we don't need the map $u$ in said proposition to be bounded. This has a more important consequence than one might think at first: if $u$ and its derivatives vary slowly compared to $\|u\|_{\infty}$, then the bounds we obtain on $\|f\circ u\|_{\textrm{Lip}-\gamma}$ are not as precise as one might wish if we only use the quantity $\|u\|_{\textrm{Lip}-\gamma}$ as the latter includes information about $\|u\|_{\infty}$ (this being unnecessary for the whole proof to work). This is one of the reasons for which we introduce almost Lipschitz maps. This notion will be of use in future work but we will already see it in use in two contexts in the remainder of this paper: the study of smoothness of flows of Lipschitz vector fields and the inverse function theorem in the Lipschitz case; the reason being that they provide a sharper control and description of a map's behaviour.
	\begin{Def} Let $n\in \mathbb{N}^*$, $0< \varepsilon\leq 1$ and $\delta \in (0,\infty)\cup \{\infty\}$. Let $E$ and $F$ be two normed vector spaces and $U$ be a subset of $E$. For every $k \in [\![0,n]\!]$, let $f^k:U\to \mathcal{L}_s(E^{\otimes k},F)$ be a map with values in the space of the symmetric $k$-linear mappings from $E$ to $F$. Denote by $R_0:U\times U\to E$ the remainder map of order $0$ associated to $f=(f^0,f^1,\ldots,f^n)$.	The collection $f$ is said to be almost Lipschitz of degree $n+\varepsilon$ on domains of size $\delta$ of $U$ if $(f^1,\ldots,f^n)$ is $\textrm{Lip}-(n+\varepsilon-1)$ and there exists a non-negative constant $\tilde{M}$, such that:
	\[\forall x,y \in U:\quad \|x-y\|< \delta \Rightarrow \|R_0(x,y)\|\leq \tilde{M}\|x-y\|^{n+\varepsilon}\]
	If $\|R_0\|_{\infty,\delta}$ denotes the smallest value for such a constant $\tilde{M}$, we will denote:
	\[\|f\|_{\delta,\textrm{Lip}-(n+\varepsilon)}=\max(\|f^1\|_{\textrm{Lip}-(n+\varepsilon-1)} , \|R_0\|_{\infty,\delta})\]
	When $\delta$ is infinite or its value is irrelevant, we will merely say that $f$ is almost Lip-$(n+\varepsilon)$ on $U$.
	\end{Def}
	\begin{Rem}	$\|.\|_{\delta,\textrm{Lip}-(n+\varepsilon)}$ does not define a norm as it vanishes for all constant maps.
	\end{Rem}
	\begin{example} Let $\gamma>1$ and $E$ and $F$ be two normed vector spaces. Let $u: E\to F$ be a bounded linear map. Then $(u,x\mapsto u,0,\ldots,0)$ is almost Lipschitz of degree $\gamma$ on $E$. Moreover $\|u\|_{\infty,\textrm{Lip}-\gamma}=\|u\|$. \end{example}
	The following lemma can be seen as a reformulation of lemma \ref{RecursiveLipCtrl} combined, if necessary, with the statement of lemma \ref{LocalLipGen}.
	\begin{lemma} Let $\gamma>1$ and $\delta \in (0,\infty)\cup \{\infty\}$. Let $E$ and $F$ be two normed vector spaces and $U$ be a subset of $E$. We assume that $(E^{\otimes k})_{1\leq k \leq \lfloor\gamma\rfloor}$ are endowed with norms satisfying the projective property. Then a map $f: U \rightarrow F$ is $\textrm{Lip}-\gamma$ if and only if $f$ is bounded and is almost Lipschitz of degree $\gamma$ on domains of size $\delta$ of $U$. In this case:
	\[\|f\|_{\textrm{Lip}-\gamma}=\max(\|f\|_{\infty},\|f\|_{\delta,\textrm{Lip}-\gamma})\]
	\end{lemma}
	On open convex sets, smooth maps are almost Lipschitz if and only if their derivatives are Lipschitz:
	\begin{Prop}\label{AlmostLipDiffMaps} Let $n\in \mathbb{N}^*$, $0< \varepsilon\leq 1$. Let $E$ and $F$ be two normed vector spaces and $U$ be an open convex subset of $E$.  We assume that $(E^{\otimes k})_{1\leq k \leq n}$ are endowed with norms satisfying the projective property. Let $f:U\to F$ be a map of class $\mathcal{C}^n$ with successive derivatives respectively denoted $f^1,\ldots,f^n$. Then $f$ is almost Lipschitz of degree $n+\varepsilon$ on $U$ if and only $f^1$ is Lipschitz of degree $n+\varepsilon-1$ on $U$. In this case:
	\[\forall \delta>0: \quad \|f\|_{\delta,\textrm{Lip}-(n+\varepsilon)}=\|f^1\|_{\textrm{Lip}-(n+\varepsilon-1)}\]
	\end{Prop}
	\begin{proof}
	By definition, if $f$ is almost Lip-$(n+\varepsilon)$ then $f^1$ is Lip-$(n+\varepsilon-1)$ on $U$. Assume now that $f^1$ is Lip-$(n+\varepsilon-1)$ on $U$. Then (theorem \ref{CaracLip}) $f^n$ is $\varepsilon$-H\"older. Using the Taylor expansion with integral remainder of $f$, we get the required upper-bound on its remainder map of order $0$. 
	\end{proof}
	\begin{lemma}\label{AlmostLipIsHolder} Let $n\in \mathbb{N}^*$, $0< \varepsilon\leq 1$ and $\delta \in (0,\infty)$. Let $E$ and $F$ be two normed vector spaces and $U$ be a subset of $E$.  If $n>1$, we assume that $(E^{\otimes k})_{1\leq k \leq n}$ are endowed with norms satisfying the projective property. Let $f=(f^0,f^1,\ldots,f^n)$ be an almost Lip-$(n+\varepsilon)$ map on domains of size $\delta$ of $U$. Then there exists a constant $C_{n,\varepsilon,\delta}$ depending only on $n$, $\varepsilon$ and $\delta$ such that: 
	\[\forall x,y \in U:\quad \|x-y\|< \delta \Rightarrow \|f^0(x)-f^0(y)\|\leq C_{n,\varepsilon,\delta} \|f\|_{\delta,\textrm{Lip}-(n+\varepsilon)} \|x-y\|\]
	\end{lemma}
	\begin{proof}
	Let $x,y \in U$ such that $ \|x-y\|< \delta$. From the Taylor-like expansion of $f^0$, we get:
	\[\|f^0(x)-f^0(y)\|\leq  \sum\limits_{j=1}^{n} \|f^1\|_{\textrm{Lip}-(n+\varepsilon-1)}\frac{\|x-y\|^{j}}{j!}+ \|R_0\|_{\infty,\delta}\|x-y\|^{n+\varepsilon} \]
	By discussing for example the cases whether $\delta$ is larger than 1, we get:
	\[\|f^0(x)-f^0(y)\|\leq \|f\|_{\delta,\textrm{Lip}-(n+\varepsilon)}\|x-y\| e \max{(1,\delta^{n+\varepsilon-1})}\]
	\end{proof}
	We start by showing that the composition of a Lipschitz map with an almost Lipschitz map (both of degree $1+\varepsilon$) is $\textrm{Lip}-(1+\varepsilon)$.
	\begin{lemma}\label{CompolipfuncCASE21} Let $E$, $F$ and $G$ be three normed vector spaces. Let $U$ be a subset of $E$ and $V$ be a subset of $F$. Let $\varepsilon\in(0,1]$ and $\delta \in (0,\infty)$. Let $f=(f^0,f^1)$ be an almost Lipschitz map of degree $1+\varepsilon$ on domains of size $\delta$ of $U$ such that $f^0(U) \subseteq V$ and $g:V \to G$ be a $\textrm{Lip}-(1+\varepsilon)$ map.	Then $g \circ f$ is $\textrm{Lip}-(1+\varepsilon)$ (with $(g\circ f)^1$ defined as a formal 1\textsuperscript{st} chain rule derivative) and there exists a constant $M_{\varepsilon, \delta}$ (depending only on $\varepsilon$ and $\delta$) such that:
	\[\|g\circ f\|_{\textrm{Lip}-(1+\varepsilon)} \leq M_{\varepsilon,\delta}\|g\|_{\textrm{Lip}-(1+\varepsilon)} \max(1, \|f\|_{\delta,\textrm{Lip}-(1+\varepsilon)}^{1+\varepsilon})\]
	\end{lemma}
	\begin{proof} The following inequalities are straightforward: 
	\[\|(g \circ f)^0\|_{\infty}\leq \|g\|_{\textrm{Lip}-(1+\varepsilon)} \quad;\quad \|(g\circ f)^1 \|_{\infty}\leq \|g\|_{\textrm{Lip}-(1+\varepsilon)} \|f^1\|_{\textrm{Lip}-\varepsilon}\] 
	Denote by  $(R_0,R_1)$ and $(S_0,S_1)$ the associated remainders to $f=(f^0,f^1)$ and $g=(g^0,g^1)$ respectively. The remainders $T_0:U\times U\to G$ and $T_1:U\times U\to \mathcal{L}(E,G)$ associated to $g\circ f$ are given by:
	\[\begin{array}{lrcl}
	\forall x,y \in U:& T_0(x,y)&=&g^1(f^0(y))(R_0(x,y))+S_0(f^0(x),f^0(y)),\\
						 &T_1(x,y)&=&(g\circ f)^1(x)-(g\circ f)^1(y)\\
	\end{array}\]
	Let $C_{\varepsilon,\delta}$ be a constant (depending only on $\varepsilon$ and $\delta$ and chosen here to be larger than 1) such that: 
	\[\forall x,y \in U:\quad \|x-y\|< \delta \Rightarrow \|f^0(x)-f^0(y)\|\leq C_{ \varepsilon,\delta} \|f\|_{\delta,\textrm{Lip}-(n+\varepsilon)} \|x-y\|\]
	Let $x,y \in U$ such that $ \|x-y\|< \delta$. We have:
	\[\|g^1(f^0(y))(R_0(x,y))\|\leq\|g\|_{\textrm{Lip}-(1+\varepsilon)}\|R_0\|_{\infty,\delta}\|x-y\|^{1+\varepsilon}\]
	and:
	\[\|S_0(f^0(x),f^0(y))\|\leq\|g\|_{\textrm{Lip}-(1+\varepsilon)}C_{ \varepsilon,\delta}^{1+\varepsilon}\ \|f\|_{\delta,\textrm{Lip}-(n+\varepsilon)}^{1+\varepsilon}\|x-y\|^{1+\varepsilon}	\]
	Hence:
	\[\|T_0(x,y)\|\leq\|g\|_{\textrm{Lip}-(1+\varepsilon)}\max(1, \|f\|_{\delta,\textrm{Lip}-(1+\varepsilon)}^{1+\varepsilon}) (1+C_{ \varepsilon,\delta}^{1+\varepsilon})\|x-y\|^{1+\varepsilon}\]
	With using similar techniques as above, one gets the inequality:
	\[\|T_1(x,y)\|\leq\|g\|_{\textrm{Lip}-(1+\varepsilon)}\max(1, \|f\|_{\delta,\textrm{Lip}-(1+\varepsilon)}^{1+\varepsilon}) (1+C_{ \varepsilon,\delta}^{\varepsilon})\|x-y\|^{\varepsilon}\]
	Therefore $g\circ f$ is $\textrm{Lip}-(1+\varepsilon)$ on the intersection of $U$ with balls of radius $\delta/2$ with a norm upper-bounded by:
	\[2  C_{ \varepsilon,\delta}^{1+\varepsilon} \|g\|_{\textrm{Lip}-(1+\varepsilon)}\max(1, \|f\|_{\delta,\textrm{Lip}-(1+\varepsilon)}^{1+\varepsilon})\]
	We conclude by using lemma \ref{LocalLipGen}.
	\end{proof}
	Now we get to the main theorem of this section:
	\begin{theo}\label{CompolipfuncCASE22} Let $E$, $F$ and $G$ be three normed vector spaces. Let $U$ be a subset of $E$ and $V$ be a subset of $F$. Let $\delta>0$ and $\gamma\geq1$. We assume that $(E^{\otimes k})_{k\geq1}$ and $(F^{\otimes k})_{k\geq1}$ are endowed with norms satisfying the projective property. Let $f:U \to F$ be an almost $\textrm{Lip}-\gamma$ map on domains of size $\delta$ of $U$ and $g:V \to G$ be a $\textrm{Lip}-\gamma$ map such that $f(U) \subseteq V$. Then $g \circ f$ (defined via successive formal chain rule derivatives) is $\textrm{Lip}-\gamma$ and there exists a constant $C_{\gamma,\delta}$ (depending only on $\gamma$ and $\delta$) such that:
	\[\|g\circ f\|_{\textrm{Lip}-\gamma} \leq C_{\gamma, \delta}\|g\|_{\textrm{Lip}-\gamma}\max( \|f\|_{\delta,\textrm{Lip}-(n+\varepsilon)}^{\gamma},1)\]
	\end{theo}
	\begin{proof} We leave the case $\gamma=1$ as an easy and straightforward exercise. Let $\varepsilon \in (0,1]$. We will prove the following by induction: 
	\begin{Claim} For all $n\in\mathbb{N}^*$, for any normed vector spaces $E$, $F$, $G$ and $H$, such that $(E^{\otimes k})_{k\geq 1}$ and $(F^{\otimes k})_{k\geq 1}$ are endowed with norms satisfying the projective property, and any subsets $U$ of $E$ and $V$ of $F$, there exists a real constant $C_{n,\varepsilon,\delta}$ (depending only on $n$, $\varepsilon$ and $\delta$) such that if $f=(f^0,\ldots,f^n):U \to F$ is an almost $\textrm{Lip}-(n+\varepsilon)$ map on domains of size $\delta$ of $U$ and $g=(g^0,\ldots,g^n):V \to G$ is a $\textrm{Lip}-(n+\varepsilon)$ map such that $f(U) \subseteq V$, then $g \circ f$ is $\textrm{Lip}-(n+\varepsilon)$ and:
	\[\|g\circ f\|_{\textrm{Lip}-(n+\varepsilon)} \leq C_{n,\varepsilon, \delta} \|g\|_{\textrm{Lip}-(n+\varepsilon)}\max(\|f\|^{n+\varepsilon}_{\delta,\textrm{Lip}-(n+\varepsilon)},1)\]
	\end{Claim}
	
	The case $n=1$ has been proved in lemma \ref{CompolipfuncCASE21}.\\	
	Let now $n\in\mathbb{N}^*$. We assume that the assertion is true for $n$ and let us prove it for $n+1$. Let $f=(f^0,\ldots,f^{n+1}):U \to F$ be an almost $\textrm{Lip}-(n+1+\varepsilon)$ on domains of size $\delta$ of $U$ and $g=(g^0,\ldots,g^{n+1}):V \to G$ be a $\textrm{Lip}-(n+1+\varepsilon)$ map such that $f(U) \subseteq V$, and with remainders denoted by $R_0, \ldots, R_{n+1}$ and $S_0, \ldots, S_{n+1}$ respectively. Let $x,y\in U$. Define $P_0(x,y)=R_0(x,y)$, and for every $k\in [\![1,n+1]\!]$: $P_k(x,y)=f^k(y)\frac{(x-y)^{\otimes k}}{k!}$ and finally:
	\[\begin{array}{rcl}
	T_0(x,y)&=&S_0(f^0(x),f^0(y))+ \sum\limits_{j=1}^{n+1} \frac{g^j(f^0(y))}{j!}\left( \underset{\underset{i_1\cdots i_j=0}{0\leq i_1,\ldots,i_j\leq n+1}}{\sum} P_{i_1}(x,y)\otimes\cdots\otimes P_{i_j}(x,y)+\right.\\
						&&\left. \underset{\underset{i_1+\cdots+i_j>n+1}{1\leq i_1,\ldots,i_j\leq n+1}}{\sum}f^{i_1}(y)\otimes\cdots\otimes f^{i_j}(y)\frac{(x-y)^{\otimes (i_1+\cdots+i_j)}}{i_1!\cdots i_j!}\right)\\
	\end{array}\]
	Then, we can simply write:
	\[\begin{array}{rcl}
	g^0(f^0(x))
	&=&g^0(f^0(y))+\sum\limits_{k=1}^{n+1} (g\circ f)^k(y)\displaystyle\frac{(x-y)^{\otimes k}}{k!}+T_0(x,y)\\
	\end{array}\]
	Assume that $\|x-y\|<\delta$. It is an easy exercise then to show, using lemma \ref{AlmostLipIsHolder}, that there exists a constant $M_{n,\varepsilon,\delta}$ (depending only on $n$, $\varepsilon$ and $\delta$) such that:
	\[\|T_0(x,y)\|\leq  M_{n,\varepsilon,\delta} \|g\|_{\textrm{Lip}-(n+1+\varepsilon)}\max(\|f\|^{n+1+\varepsilon}_{\delta, \textrm{Lip}-(n+1+\varepsilon)},1) \|x-y\|^{n+1+\varepsilon}\]
	We have also that $\|g^0\circ f^0\|_{\infty}\leq\|g\|_{\textrm{Lip}-(n+1+\varepsilon)}$. All that remains to do then to end the proof is to show that $((g\circ f)^1,\ldots,((g\circ f)^1)^{n})$ is $\textrm{Lip}-(n+\varepsilon)$ on the intersection of every ball of radius $\delta/2$ with $U$ (with a uniformly well controlled norm) then conclude using lemma \ref{HighDerivAgree}, lemma \ref{RecursiveLipCtrl} and then finally lemma \ref{LocalLipGen}. As $g^1$ is $\textrm{Lip}-(n+\varepsilon)$ and $f^0$ is almost $\textrm{Lip}-(n+\varepsilon)$, then $g^1 \circ f^0$  is $\textrm{Lip}-(n+\varepsilon)$ by the induction hypothesis. More precisely, there exists a constant $m$ (depending only on  $n$ and $\varepsilon$) such that:
	\[\|g^1 \circ f^0\|_{\textrm{Lip}-(n+\varepsilon)} \leq m \|g\|_{\textrm{Lip}-(n+\varepsilon+1)}\max(\|f\|^{n+\varepsilon}_{\delta, \textrm{Lip}-(n+\varepsilon+1)},1)\]
	Define:
	\[\begin{array}{rccc}
	\psi:&\mathcal{L}_c(F,G)\times\mathcal{L}_c(E,F)&\to&\mathcal{L}_c(E,G)\\
		&	(v,u)								&\mapsto& v\circ u\\
	\end{array}\]
	$\psi$ is a continuous bilinear map with norm $1$.  As $(g\circ f)^1=\psi(g^1 \circ f^0,f^1)$ then, using proposition \ref{Compobifunc}, $((g\circ f)^1,\ldots,((g\circ f)^1)^n)$ is $\textrm{Lip}-(n+\varepsilon)$ and there exists a constant $C_{n,\varepsilon}$ depending only on $n$ and $\varepsilon$ such that: 
	\[\|(g\circ f)^1\|_{\textrm{Lip}-(n+\varepsilon)} \leq C_{n,\varepsilon}\|g\|_{\textrm{Lip}-(n+\varepsilon+1)}\max(\|f\|^{n+\varepsilon+1}_{\delta, \textrm{Lip}-(n+\varepsilon+1)},1)\] 
	which concludes the induction argument.
	 \end{proof}
	 As a corollary of theorem \ref{CompolipfuncCASE22}, we can claim that a well-defined composition of Lipschitz maps is itself Lipschitz. This particular result has already appeared in \cite{TCL} but with a slight mistake in the control of the Lipschitz norm of the composition map that we correct here (this goes along with the full and detailed proof of theorem \ref{CompolipfuncCASE22} that differs from the one suggested in the aforementioned paper):
	\begin{theo}\label{Compolipfunc} Let $E$, $F$ and $G$ be three normed vector spaces. Let $U$ be a subset of $E$ and $V$ be a subset of $F$. Let $\gamma>1$. We assume that $(E^{\otimes k})_{k\geq1}$ and $(F^{\otimes k})_{k\geq1}$ are endowed with norms satisfying the projective property. Let $f:U \to F$ and $g:V \to G$ be two $\textrm{Lip}-\gamma$ maps such that $f(U) \subseteq V$. Then $g \circ f$ (defined via successive formal chain rule derivatives) is $\textrm{Lip}-\gamma$ and there exists a constant $C_{\gamma}$ (depending only on $\gamma$) such that:
	\[\|g\circ f\|_{\textrm{Lip}-\gamma} \leq C_{\gamma}\|g\|_{\textrm{Lip}-\gamma}\max(\|f\|^{\gamma}_{\textrm{Lip}-\gamma},1)\]
	\end{theo}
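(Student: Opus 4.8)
The plan is to argue by induction on $n:=\lfloor\gamma\rfloor$, writing $\gamma=n+\varepsilon$ with $\varepsilon\in(0,1]$, and in the inductive step to extract the first derivative of $g\circ f$ by the chain rule and recognise it as a bilinear composition to which the induction hypothesis and Proposition \ref{Compobifunc} apply. The range $\gamma\le 1$ ($n=0$) is elementary: $g\circ f$ is bounded by $\|g\|_{\textrm{Lip}-\gamma}$ and its modulus of continuity is estimated directly, which is why no quantitative constant is asserted there. The case $n=1$, i.e.\ $\gamma\in(1,2]$, is exactly Lemma \ref{CompolipfuncCASE1}. So the whole content is in passing from $n$ to $n+1$ for $n\ge 1$.

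\textbf{The inductive step.} Assume the statement for $\textrm{Lip}-(m+\varepsilon')$ maps whenever $1\le m\le n$ and $\varepsilon'\in(0,1]$, and let $f,g$ be $\textrm{Lip}-(n+1+\varepsilon)$. Then $g\circ f$ is bounded by $\|g\|_{\textrm{Lip}-(n+1+\varepsilon)}$, and its candidate first derivative is
$$(g\circ f)^1(x)=B\bigl(g^1(f(x)),f^1(x)\bigr),\qquad B(\phi,\psi):=\phi\circ\psi,$$
where $B:\mathcal{L}(F,G)\times\mathcal{L}(E,F)\to\mathcal{L}(E,G)$ is bilinear and bounded with $\|B\|\le 1$. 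Now $g^1$ is $\textrm{Lip}-(n+\varepsilon)$ on $V$ (it is the derivative of the $\textrm{Lip}-(n+1+\varepsilon)$ map $g$), and $f$ is $\textrm{Lip}-(n+\varepsilon)$ by Theorem \ref{EmbedLip2}, with $f(U)\subseteq V$; the induction hypothesis then makes $g^1\circ f$ a $\textrm{Lip}-(n+\varepsilon)$ map with norm at most $C_{n+\varepsilon}\,\|g^1\|_{\textrm{Lip}-(n+\varepsilon)}\max(\|f\|_{\textrm{Lip}-(n+\varepsilon)}^{n+\varepsilon},1)$, which by the embedding inequalities is at most a constant times $\|g\|_{\textrm{Lip}-(n+1+\varepsilon)}\max(\|f\|_{\textrm{Lip}-(n+1+\varepsilon)}^{n+\varepsilon},1)$. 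Since $f^1$ is $\textrm{Lip}-(n+\varepsilon)$ as well, Proposition \ref{Compobifunc} (whose extra symmetry hypothesis is met for the norms of interest, or can be bypassed as indicated below) applies to $B(g^1\circ f,f^1)$ and shows $(g\circ f)^1$ is $\textrm{Lip}-(n+\varepsilon)$, with a norm bounded by a constant times $\|g\|_{\textrm{Lip}-(n+1+\varepsilon)}\max(\|f\|_{\textrm{Lip}-(n+1+\varepsilon)}^{n+\varepsilon},1)\cdot\|f\|_{\textrm{Lip}-(n+1+\varepsilon)}$, which --- and this is where one must be careful to recover the \emph{exact} power $\gamma$ --- is a constant times $\|g\|_{\textrm{Lip}-(n+1+\varepsilon)}\max(\|f\|_{\textrm{Lip}-(n+1+\varepsilon)}^{\gamma},1)$ because $\max(t^{a},1)\,t\le\max(t^{a+1},1)$ for all $t,a\ge0$. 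To finish, one writes the order-$0$ Taylor expansion $g(f(x))=\sum_{j=0}^{n+1}(g\circ f)^j(y)\bigl(\tfrac{(x-y)^{\otimes j}}{j!}\bigr)+T_0(x,y)$ obtained by substituting the expansions of $f$ and $g$ into one another; $T_0$ gathers $g$'s remainder $S_0(f(x),f(y))$, the cross-terms between $g$'s Taylor coefficients and $f$'s remainders, and the part of $f$'s expansion of total order exceeding $n+1$, and --- reasoning on balls of a fixed radius and invoking Lemma \ref{LocalLipGen}, as in the proof of Proposition \ref{Compobifunc} --- is bounded by $C\|x-y\|^{n+1+\varepsilon}$. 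Since a bounded map admitting such an order-$0$ expansion whose first derivative is $\textrm{Lip}-(n+\varepsilon)$ is by definition $\textrm{Lip}-(n+1+\varepsilon)$, the step is complete once the $k$-th derivative of $(g\circ f)^1$ is identified with the Fa\`{a} di Bruno coefficient $(g\circ f)^{k+1}$.

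\textbf{Main obstacle.} That last identification, and the control of the constant, are the two real difficulties. For the first, the combinatorial lemma stated just above (the one with $\textrm{Sym}$ and the bijection sets $\mathcal{S}_{r_1,\ldots,r_i}$) is exactly the tool: differentiating $B(g^1\circ f,f^1)$ repeatedly and expanding $g^1\circ f$ by the induction hypothesis produces a sum over the ways of distributing $k$ tensor slots among derivatives of $g$ at $f(x)$, symmetrised products of derivatives of $f$, and additional copies of $f^1$, and the lemma is what collapses the over-counting and re-symmetrises this into the partition sum $\sum_{\pi}g^{(|\pi|)}(f(x))\bigl(\bigotimes_{I\in\pi}f^{(|I|)}(x)\bigr)$ defining $(g\circ f)^{k+1}$ --- mirroring the closing argument in the proof of Proposition \ref{Compobifunc}; carried out by hand this way, the symmetry hypothesis on the norms of $E$ is not needed. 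For the second, one must check that no term of the remainders $T_k$ carries a power of $\|f\|_{\textrm{Lip}-\gamma}$ larger than $\gamma$: the Fa\`{a} di Bruno coefficients contribute only $\|f\|_{\textrm{Lip}-\gamma}^{k}\le\|f\|_{\textrm{Lip}-\gamma}^{n+1}$, absorbed into $\max(\|f\|_{\textrm{Lip}-\gamma}^{\gamma},1)$, while the exponent $\gamma$ is produced, sharply, by the $g$-remainder terms such as $S_0(f(x),f(y))$, estimated through $\|f(x)-f(y)\|\le\|f\|_{\textrm{Lip}-1}\|x-y\|$ together with the embedding $\|f\|_{\textrm{Lip}-1}\le M_{\gamma,1}\|f\|_{\textrm{Lip}-\gamma}$. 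It is precisely the bookkeeping of these powers --- and the use of the induction hypothesis rather than a cruder iteration of Lemma \ref{CompolipfuncCASE1} --- that is done incorrectly in \cite{TCL}; once it is done right, the accumulated multiplicative factors define $C_\gamma$ recursively from the lower-level constants $C_{m+\varepsilon'}$, the embedding constants $M_{\gamma,\gamma'}$, and the constant of Proposition \ref{Compobifunc}.
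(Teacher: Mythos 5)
Your proof is correct and follows essentially the same route as the paper's: you induct on $\lfloor\gamma\rfloor$ with Lemma \ref{CompolipfuncCASE1} as base case, recognise $(g\circ f)^1$ as the bilinear combination of $g^1\circ f$ and $f^1$ under the composition pairing, invoke the induction hypothesis on $g^1\circ f$ and Proposition \ref{Compobifunc} on the resulting pair, control the order-zero remainder on balls of fixed radius and patch with Lemma \ref{LocalLipGen}, and identify the derivatives of $(g\circ f)^1$ with the Fa\`{a} di Bruno coefficients $(g\circ f)^{k+1}$ via the combinatorial lemma --- exactly as in the paper. Your side observation that the appeal to Proposition \ref{Compobifunc} tacitly imports its symmetric-norm hypothesis, which is absent from the statement of Theorem \ref{Compolipfunc}, is a fair catch; but your assertion that the symmetry can then be bypassed by working with the intrinsic symmetry of $g^{j}(f(x))$ and $f^{i}(x)$ is left unsubstantiated and is not obvious, since $(g\circ f)^{k}(x)$ is still a multilinear map whose operator norm on $E^{\otimes k}$ must be controlled, and without a permutation-invariant norm it is not automatic that the symmetrisation sum $\sum_{\sigma}\sigma(v)$ costs only a factor of $k!$.
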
	
	\begin{Rem} We can obtain an easier proof for theorem \ref{Compolipfunc} by using the extension theorems that we will review in subsection \ref{sec:ExtensionThm}. The inequality will still prove hard to get and will involve a constant depending on the dimension of the spaces, an inconvenience that we don't have in the proof presented above.
	\end{Rem}
	\begin{Rem}
	Using theorem \ref{CompolipfuncCASE22}, we can now retrieve (up to a multiplicative constant) the result of proposition \ref{Compolinfunc2} by treating linear maps as almost Lipschitz maps.
	\end{Rem}

	\subsection{A quantitative estimate}
	In this section, we give some more precise local quantitative estimates (in the Lipschitz norm) if the value of a Lipschitz map at a point is known.
	
	\begin{theo}\label{QuantEstimBasic} Let $\gamma,\gamma'>0$ such that $\gamma'<\gamma$. Let $E$ and $F$ be two normed vector spaces, $U$ be a subset of $E$ and $x_0\in U$. Let $f=(f^0,\ldots,f^{\lfloor\gamma\rfloor})$ be a $\textrm{Lip}-\gamma$ map on $U$ with values in $F$. Assume that for all $k\in [\![0,{\lfloor\gamma\rfloor}]\!]: f^k(x_0)=0$. We also assume that $(E^{\otimes k})_{1\leq k \leq {\lfloor\gamma\rfloor}}$ are endowed with norms satisfying the projective property. Then there exists a constant $C_{\gamma, \gamma'}$ (that can be chosen to depend only, and continuously, on the difference $\gamma- \gamma'$) such that for all $\delta>0$ one has:
	\[\|f\|_{\textrm{Lip}-\gamma', B(x_0,\delta)\cap U}\leq C_{\gamma, \gamma'} \|f\|_{\textrm{Lip}-\gamma}\delta^{\gamma-\gamma'} \]
	\end{theo}
	\begin{proof} Let $n,n'\in\mathbb{N}$ and $\varepsilon,\varepsilon'\in(0,1]$ such that $\gamma=n+\varepsilon$ and $\gamma'=n'+\varepsilon'$. Denote by $(R_0,\ldots,R_n)$ the remainders associated to $f$. Let $\delta>0$ and let $k\in[\![0,n]\!]$.
	Let $x\in B(x_0,\delta)\cap U$ and $v\in E^{\otimes k}$. Then, as $f$ is $\textrm{Lip}-\gamma$ and that $f^j(x_0)=0$ for all $j\in [\![0,n]\!]$, we get:
	\[\begin{array}{rcl}
	\|f^k(x)(v)\|&=&\|\sum\limits_{k}^{n} f^j(x_0)(\frac{v\otimes (x-x_0)^{\otimes (j-k)}}{(j-k)!})+R_k(x,x_0)(v)\|\\
			&=&\|R_k(x,x_0)(v)\|\\
			&\leq&\|f\|_{\textrm{Lip}-\gamma}\|x-x_0\|^{\gamma-k}\|v\|\\
			&\leq&\|f\|_{\textrm{Lip}-\gamma}\delta^{\gamma-k}\|v\|\\
	\end{array}\]
	Therefore, $\sup\limits_{x\in B(x_0,\delta)\cap U}\|f^k(x)\|\leq\|f\|_{\textrm{Lip}-\gamma}\delta^{\gamma-k}$, for all $k\in[\![0,n]\!]$. In particular:
	\[\max\limits_{0\leq k \leq n' }\|f^k\|_{\infty, B(x_0,\delta)\cap U}\leq\|f\|_{\textrm{Lip}-\gamma}\min(1,\delta^{\gamma-n'})\]
	Let $k\in[\![0,n']\!]$. We define $S_k:U\times U\to\mathcal{L}(E^{\otimes k},F)$ (the new remainder) by:
	\[S_k(x,y)(v)=f^k(x)(v)-\sum\limits_{j=k}^{n'} f^j(y)(\frac{v\otimes (x-y)^{\otimes (j-k)}}{(j-k)!})\]
	Let $x,y\in B(x_0,\delta)\cap U$ and $v\in E^{\otimes k}$. Writing the Taylor expansion of $f$ as a $\textrm{Lip}-\gamma$ map, we get the following identity:
	\[S_k(x,y)(v)=\sum\limits_{j=n'+1}^{n} f^j(y)(\frac{v\otimes (x-y)^{\otimes (j-k)}}{(j-k)!})+R_k(x,y)(v)\]
	which, using our new upper-bound for $\|f^k\|_{\infty,B(x_0,\delta)\cap U}$, leads to the inequality:
	\[\begin{array}{rcl}
	\|S_k(x,y)(v)\|&\leq&\|f\|_{\textrm{Lip}-\gamma}\left(\sum\limits_{j=n'+1}^{n} \delta^{\gamma-j}\frac{\|x-y\|^{j-k}}{(j-k)!}+\|x-y\|^{\gamma-k}\right)\|v\|\\
				&\leq&\|f\|_{\textrm{Lip}-\gamma}\|x-y\|^{\gamma'-k}\left(\sum\limits_{j=n'+1}^{n} \delta^{\gamma-j}\frac{\|x-y\|^{j-\gamma'}}{(j-k)!}+\|x-y\|^{\gamma-\gamma'}\right)\|v\|
	\end{array}\]
	Therefore:
	\[\underset{x\neq y}{\sup\limits_{x,y\in B(x_0,\delta)\cap U}}\frac{\|S_k(x,y)\|}{\|x-y\|^{\gamma'-k}} \leq\|f\|_{\textrm{Lip}-\gamma}\delta^{\gamma-\gamma'}\left(
\sum\limits_{j=\lfloor\gamma'\rfloor+1}^{\lfloor\gamma\rfloor}\frac{2^{j-\gamma'}}{(j-\lfloor\gamma'\rfloor)!}+2^{\gamma-\gamma'} 
\right)\]
	By taking for example $C_{\gamma, \gamma'}=e^2+2^{\gamma-\gamma'}$, we get the sought result.\end{proof}
	\begin{Rem} With the notations of the previous theorem, if we only have $f^k(x_0)=0$ for $k\in [\![0,n']\!]$, the result remains essentially true but with a slightly different upper-bound (that still converges to $0$ as $\delta$ goes to $0$). However, in the cases where $\gamma'=\gamma$ or if there exists $k\in [\![0,n']\!]$ such that $f^k(x_0)\neq0$ then we cannot get a better control of $\|f\|_{\textrm{Lip}-\gamma', B(x_0,\delta)\cap U}$ than $\|f\|_{\textrm{Lip}-\gamma}$ as the example below shows (we can, nevertheless, improve the control of $\|f^{k}\|_{\infty,B(x_0,\delta)\cap U}$ for all $k\in [\![0,n]\!]$ in the first case).
	\end{Rem}

	\begin{example}
	Consider the function $f: x\mapsto x$ defined on $(-1,1)$. As $f$ is smooth, $f$ is Lipschitz of any degree. \begin{itemize}
	\item (Case $\gamma'=\gamma=1$) On the one hand, $f$ is $\textrm{Lip}-1$, $\|f\|_{\textrm{Lip}-1}=1$ and $f(0)=0$. On the other hand, for any $\delta \in (0,1]$, there does not exist a constant $\lambda$ strictly less than $1$ such that:
	\[\forall x,y \in (\delta,-\delta):\quad |f(x)-f(y)|\leq\lambda|x-y|\]
	Therefore : 
	\[\forall \delta \in (0,1]:\quad \|f\|_{\textrm{Lip}-1, (\delta,-\delta)}=1\]
	However, we still have $\|f\|_{\infty, (\delta,-\delta)}\underset{\delta\rightarrow 0}\longrightarrow 0$
	\item (Case $\gamma'=3/2, \gamma=2$) $f$ is $\textrm{Lip}-2$, $\|f\|_{\textrm{Lip}-2}=1$ and $f(0)=0$. However, $f'(0)\neq 0$ and:
	\[\forall \delta \in (0,1]:\quad \|f\|_{\textrm{Lip}-3/2, (\delta,-\delta)}=1\]
	\end{itemize}
	\end{example}

	\begin{Rem} Using theorem \ref{QuantEstimBasic}, one can easily then compare two Lip-$\gamma$ maps in the Lip-$\gamma'$ norm, when $\gamma'<\gamma$, which values and ``successive derivatives"' values (in the sense of a Lipschitz map) agree at one point. \end{Rem}

	\subsection{Extension theorems and a short review of the literature}\label{sec:ExtensionThm}
	One of the most interesting and still open problems in Lipschitz geometry, and classical analysis in general, is about the existence of extensions of Lipschitz (or smooth) maps to the whole space and the control of the Lipschitz norm of the extension. This is known as Whitney's extension problem and can be informally stated in the following way:
	\begin{quote}
	Given an arbitrary set $A$ and a map $f: A (\subseteq E)\rightarrow F$, where $E$ and $F$ are vector spaces:
\begin{enumerate}\item In which ways can one define $f$ to be a \emph{smooth} map on $A$ so that, if $\mathring{A}$ (the interior of $A$) is not empty, $f_{|\mathring{A}}$ is smooth in the classical sense?
\item Given such a definition, can we extend $f$ to the whole space $E$ so that this extension is smooth in the classical sense?
\end{enumerate}
\end{quote}

	Since Whitney introduced it in a series of three seminal papers \cite{Whitney1, Whitney2, Whitney3}, several mathematicians have been working on this problem, mostly in the case where both $E$ and $F$ are finite dimensional. Whitney himself was the first one to suggest a solution in the case where $A$ is the closure of a region. The answers to this question are of crucial importance. For instance, the theory of rough paths (as presented in \cite{Flour} and \cite{Revista}) requires that the vector fields appearing in a rough differential equation be Lipschitz (this restriction enables one to derive global solutions along with the rate of convergence of the Picard iterations). As linear and polynomial functions in particular are not in this class of functions, using Whitney's theorem allows to extend the restriction of polynomials to compact sets (which are Lipschitz) to the whole space in a way that they stay Lipschitz. Another illustration would be the construction of a suitable function from sampled data so that one can work in the appropriate class of functions associated to the experiment's physical model.\\
	
	We will state below two other examples of such results: one in which one can extend Lipschitz maps of any degree to the whole space, but at the cost of amplifying the Lipschitz norm; and another one where the extension has the same Lipschitz norm as the map we start with but which is currently only obtained for Lipschitz-$1$ maps in the framework of Hilbert spaces.\\

	Stein discussed in some length the class of Lipschitz functions in \cite{Stein} and gave a Whitney extension theorem in this case: 
	\begin{theo}[Stein \cite{Stein}] Let $\gamma\geq1$. Let $E$ and $F$ be two finite dimensional vector spaces and $K$ be a closed subset of $E$. There exists a continuous linear map sending every $F$-valued Lip-$\gamma$ map $f$ defined on $K$ to an $F$-valued Lip-$\gamma$ map $\tilde{f}$ defined on $E$ such that $\tilde{f}_{|K}=f$. Moreover, the norm of the linear extension map depends only on $\gamma$ and the dimensions of $E$ and $F$.
	\end{theo}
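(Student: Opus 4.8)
The plan is to follow Stein's classical approach via a Whitney-type decomposition of the open complement $E \setminus K$ together with a smooth partition of unity subordinate to it. First I would fix the closed set $K$ and recall that for a $\textrm{Lip}$-$\gamma$ map $f = (f^0,\ldots,f^{\lfloor\gamma\rfloor})$ on $K$, the data one must extend is not just $f^0$ but the whole jet: the key point of Stein's definition (the one given at the start of this paper) is that the remainder estimates $\|R_k(x,y)\| \leq M\|x-y\|^{\gamma-k}$ for $x,y \in K$ encode exactly the consistency one needs for a Whitney-style patching. So the construction will assign to each Whitney cube $Q$ (with center $x_Q$ and a nearest point $p_Q \in K$) the Taylor polynomial $P_Q(x) = \sum_{k=0}^{\lfloor\gamma\rfloor} f^k(p_Q)\big(\tfrac{(x-p_Q)^{\otimes k}}{k!}\big)$, and then set $\tilde f^0(x) = \sum_Q \theta_Q(x) P_Q(x)$ on $E \setminus K$, with $\tilde f^0 = f^0$ on $K$, where $(\theta_Q)$ is the Whitney partition of unity; the maps $\tilde f^1,\ldots,\tilde f^{\lfloor\gamma\rfloor}$ are then the honest derivatives of $\tilde f^0$ on the open set $E\setminus K$, which exist because each $P_Q$ is a polynomial and the sum is locally finite.

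The second block of the argument is the verification that $\tilde f$ so defined is $\textrm{Lip}$-$\gamma$ on all of $E$ with norm bounded by a constant times $\|f\|_{\textrm{Lip}-\gamma}$, the constant depending only on $\gamma$, $\dim E$, $\dim F$. This splits by the location of the two points $x,y$ at which one tests the remainder. When $x,y$ both lie deep inside a single Whitney cube (or in neighbouring cubes far from $K$), one uses the standard Whitney estimates $|\partial^\alpha \theta_Q| \lesssim \operatorname{diam}(Q)^{-|\alpha|}$ together with the fact that the difference of the polynomials $P_Q - P_{Q'}$ for adjacent cubes is controlled, via the $\textrm{Lip}$-$\gamma$ remainder bounds on $K$, by a power of $\operatorname{diam}(Q) \approx \operatorname{dist}(x,K)$; here the cancellation $\sum_Q \theta_Q \equiv 1$ (hence $\sum_Q \partial^\alpha\theta_Q \equiv 0$ for $|\alpha|\geq 1$) is what makes the negative powers of $\operatorname{diam}(Q)$ harmless. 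When one of the points is in $K$ and the other is in the interior, one compares $\tilde f^k(x)$ against the polynomial anchored at the $K$-point and again invokes the remainder estimates for $f$ on $K$; the case where both points are in $K$ is trivial since $\tilde f$ restricts to $f$ there. Finally, to see that $\tilde f^1,\ldots,\tilde f^{\lfloor\gamma\rfloor}$ match $f^1,\ldots,f^{\lfloor\gamma\rfloor}$ as one approaches $K$ (so that the global remainders $R_k(x,y)$ for $x\in E\setminus K$, $y\in K$ are correctly controlled), one checks that the derivatives of the patched function converge to the prescribed jet, using that $p_Q \to y$ and $\operatorname{diam}(Q)\to 0$ as $x\to y\in K$. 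Because every estimate is linear in the jet data $f^k$, the extension map $f \mapsto \tilde f$ is linear, and continuity with the claimed norm bound follows from the estimates just described.

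The linearity is essentially free from the construction (the polynomials $P_Q$ depend linearly on the $f^k(p_Q)$, and the partition of unity is fixed independently of $f$), so I would state it as an immediate corollary rather than argue it separately. One genuine subtlety to flag: the reduction to a fixed degree uses that $\gamma$ is not an integer only implicitly — for integer $\gamma$ the top-order estimate becomes a genuine Hölder-$1$ (Lipschitz in the metric sense) condition rather than a strict Hölder bound, and the Whitney estimates go through uniformly, so no separate treatment is needed, but I would remark on it.

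The step I expect to be the main obstacle is the second block, specifically the remainder estimate for a pair $x \in E\setminus K$, $y \in K$ with $\operatorname{dist}(x,K)$ comparable to $\|x-y\|$: here one must simultaneously juggle the Whitney geometry (finitely many cubes contribute, their diameters are comparable to $\operatorname{dist}(x,K)$, their nearest points $p_Q$ are all within a bounded multiple of $\|x-y\|$ of $y$), the derivative bounds on the partition of unity, and the $\textrm{Lip}$-$\gamma$ consistency of $f$ on $K$ — and one must extract the sharp exponent $\gamma - k$ in the bound on $R_k(x,y)$, not merely some positive power. Bookkeeping the combinatorial constants (number of overlapping cubes, which depends on $\dim E$, and the norm of $u^{\otimes k}$-type factors, which is where $\dim F$ enters) is what produces the stated dependence of the extension norm on $\gamma$ and the two dimensions. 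A clean way to organize this is to first prove the estimate on each Whitney cube for $\tilde f$ against its own anchor polynomial, invoke Lemma \ref{LocalLipGen} (the local-to-global characterization) to assemble a $\textrm{Lip}$-$\gamma$ bound on a neighbourhood of each point of $E\setminus K$, and handle the boundary behaviour near $K$ as the one remaining case by hand.
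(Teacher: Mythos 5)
The paper does not prove this theorem; it is stated as a known result and cited directly to Stein's book \cite{Stein}, so there is no internal proof to compare against. Your outline is a faithful sketch of the classical Whitney--Stein construction that does appear in that reference: Whitney decomposition of $E\setminus K$ into cubes $Q$ with $\operatorname{diam}(Q)$ comparable to $\operatorname{dist}(Q,K)$, a subordinate partition of unity with $|\partial^\alpha\theta_Q|\lesssim\operatorname{diam}(Q)^{-|\alpha|}$, anchor polynomials $P_Q$ built from the jet at a nearest point $p_Q\in K$, and patching via $\tilde f^0=\sum_Q\theta_Q P_Q$ on $E\setminus K$. You correctly locate the real work in the remainder estimate for mixed pairs $x\notin K$, $y\in K$, and correctly identify the cancellation $\sum_Q\partial^\alpha\theta_Q\equiv0$ for $|\alpha|\geq1$ as what tames the negative powers of $\operatorname{diam}(Q)$; linearity of $f\mapsto\tilde f$ and the claimed norm bound then follow as you say.

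Two small cautions. First, Lemma \ref{LocalLipGen} demands a single radius $\delta$ valid uniformly over all base points, whereas the Whitney cube diameters shrink to zero near $\partial K$; that lemma can therefore only close the argument on a region of the form $\{\operatorname{dist}(\cdot,K)\geq\delta_0\}$, and the collar $0<\operatorname{dist}(\cdot,K)<\delta_0$ must be folded into the boundary case you already flag rather than handled by Lemma \ref{LocalLipGen}. Second, the dependence of the operator norm on $\dim F$ is not actually forced by the construction (one extends component by component, and for any fixed choice of norm on $F$ the constant does not grow with $\dim F$); it appears in the statement only because no norm on $F$ is fixed in advance, so keeping it in the constant is harmless but should not be attributed to the cube-overlap combinatorics, which depend only on $\dim E$.
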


	\begin{theo}[Kirszbraun \cite{Kirs}] Let $H_1$ and $H_2$ be two Hilbert spaces. Let $A$ be a subset of $H_1$, $K\geq0$ and $f:A\rightarrow H_2$ be a map such that:
	\[\forall x,y\in A:\quad \|f(x)-f(y)\|\leq K\|x-y\|\]
	(i.e. $f$ is $1$-H\"older). Then there exists a map $\tilde{f}:H_1\rightarrow H_2$ such that $\tilde{f}_{|A}=f$ and:
	\[\forall x,y\in H_1:\quad \|\tilde{f}(x)-\tilde{f}(y)\|\leq K\|x-y\|\]
	Moreover, if $f$ is bounded (i.e. $f$ is Lip-$1$) then $\tilde{f}$ can be chosen to be bounded and such that $\sup_A \|f\|=\sup_{H_1} \|\tilde{f}\|$.
	\end{theo}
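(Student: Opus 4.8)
The plan is to first reduce to the case $K=1$ (the case $K=0$ being trivial; otherwise apply the result to $K^{-1}f$ and rescale the extension by $K$), and then to build the extension one point at a time via Zorn's lemma. Call a pair $(D,g)$ \emph{admissible} if $A\subseteq D\subseteq H_1$ and $g\colon D\to H_2$ is $1$-Lipschitz with $g_{|A}=f$, and partially order admissible pairs by extension of the domain. The union of a chain of admissible pairs is again admissible — the Lipschitz estimate only ever involves two points, and any two points of the union lie in a common member of the chain — so Zorn's lemma yields a maximal admissible pair $(D^{\ast},g^{\ast})$. The heart of the matter is the \emph{one-point extension lemma}: if $g\colon D\to H_2$ is $1$-Lipschitz and $p\in H_1\setminus D$, then $g$ extends to a $1$-Lipschitz map on $D\cup\{p\}$. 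Granting this, maximality forces $D^{\ast}=H_1$, and $\tilde f:=g^{\ast}$ is the desired extension.

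To prove the one-point extension lemma, one must produce $z\in H_2$ with $\|z-g(x)\|\leq\|p-x\|$ for all $x\in D$, that is, a point of $\bigcap_{x\in D}\overline{B}\bigl(g(x),\|p-x\|\bigr)$. Each set in this intersection is closed, convex and bounded, hence weakly compact in the Hilbert space $H_2$, so the intersection is non-empty provided every finite subfamily has non-empty intersection. This reduces everything to \textbf{Kirszbraun's lemma}: if $x_1,\dots,x_n\in H_1$ and $y_1,\dots,y_n\in H_2$ satisfy $\|y_i-y_j\|\leq\|x_i-x_j\|$ for all $i,j$, and $p\in H_1$, then $\bigcap_{i=1}^{n}\overline{B}(y_i,r_i)\neq\emptyset$, where $r_i:=\|x_i-p\|$ (apply it with $y_i=g(x_i)$, permissible since $g$ is $1$-Lipschitz). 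I expect this finite lemma to be the main obstacle; everything surrounding it is soft, and it is the only place where the Hilbert (parallelogram) structure is genuinely used — the theorem is false for general Banach targets.

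For Kirszbraun's lemma we may assume all $r_i>0$ (else $z=y_k$ for an index $k$ with $r_k=0$ works, since $\|y_k-y_i\|\leq\|x_k-x_i\|=r_i$) and that the $y_i$ are not all equal (else their common value works). Put $\Omega_{\lambda}:=\bigcap_{i}\overline{B}(y_i,\lambda r_i)$ and $\lambda^{\ast}:=\inf\{\lambda>0:\Omega_{\lambda}\neq\emptyset\}$; this infimum is positive, and is attained because the $\Omega_{\lambda}$ are nested closed sets with $\Omega_{\lambda}\ni y_1$ for $\lambda$ large. By minimality $\Omega_{\lambda^{\ast}}$ is a single point $z^{\ast}$: two distinct points of it would have a midpoint lying, by the parallelogram identity, strictly inside every defining ball, so that $\lambda^{\ast}$ could be lowered. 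Let $I:=\{i:\|z^{\ast}-y_i\|=\lambda^{\ast}r_i\}$ be the active indices; a Hahn–Banach separation argument shows $z^{\ast}\in\mathrm{conv}\{y_i:i\in I\}$, since otherwise $z^{\ast}$ could be translated in a direction pointing away from all active $y_i$, strictly relaxing the active constraints while keeping the slack ones slack, again contradicting $\Omega_{\lambda^{\ast}}=\{z^{\ast}\}$. Write $z^{\ast}=\sum_{i\in I}\mu_i y_i$ with $\mu_i\geq0$, $\sum_{i\in I}\mu_i=1$, and set $\bar x:=\sum_{i\in I}\mu_i x_i$. Applying the identity $\tfrac12\sum_{i,j}\mu_i\mu_j\|a_i-a_j\|^2=\sum_i\mu_i\|a_i-\sum_k\mu_k a_k\|^2$ once with $a_i=y_i$ and once with $a_i=x_i$, using $\|y_i-y_j\|\leq\|x_i-x_j\|$, and using that $\bar x$ minimises $w\mapsto\sum_{i\in I}\mu_i\|x_i-w\|^2$ (so that $\sum_{i\in I}\mu_i\|x_i-\bar x\|^2\leq\sum_{i\in I}\mu_i\|x_i-p\|^2=\sum_{i\in I}\mu_i r_i^2$), one obtains
\[
(\lambda^{\ast})^2\sum_{i\in I}\mu_i r_i^2=\sum_{i\in I}\mu_i\|y_i-z^{\ast}\|^2=\tfrac12\sum_{i,j\in I}\mu_i\mu_j\|y_i-y_j\|^2
\]
and
\[
\tfrac12\sum_{i,j\in I}\mu_i\mu_j\|y_i-y_j\|^2\leq\tfrac12\sum_{i,j\in I}\mu_i\mu_j\|x_i-x_j\|^2=\sum_{i\in I}\mu_i\|x_i-\bar x\|^2\leq\sum_{i\in I}\mu_i r_i^2.
\]
Combining these and using $\sum_{i\in I}\mu_i r_i^2>0$ forces $\lambda^{\ast}\leq1$, hence $\Omega_1\supseteq\Omega_{\lambda^{\ast}}\neq\emptyset$, which proves the lemma.

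Finally, for the quantitative addendum, suppose $M:=\sup_{A}\|f\|<\infty$ and let $\tilde f$ be a $1$-Lipschitz extension constructed as above. Let $P\colon H_2\to\overline{B}(0,M)$ be the nearest-point projection onto that closed ball, which is $1$-Lipschitz and fixes $\overline{B}(0,M)$ pointwise. Then $P\circ\tilde f$ is again a $1$-Lipschitz extension of $f$ (each $f(a)$ lies in $\overline{B}(0,M)$), it satisfies $\sup_{H_1}\|P\circ\tilde f\|\leq M$, and since it restricts to $f$ on $A$ the reverse inequality $\sup_{H_1}\|P\circ\tilde f\|\geq\sup_A\|f\|=M$ is automatic; hence $\sup_{H_1}\|P\circ\tilde f\|=\sup_{A}\|f\|$. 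Undoing the initial rescaling by $K$ yields the theorem in the stated generality.
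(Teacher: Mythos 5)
The paper states this theorem as a classical result and simply cites Kirszbraun's original article; it gives no proof, so there is no in-paper argument to compare against. Your proposal is correct and is essentially the standard proof of Kirszbraun's theorem: Zorn's lemma together with weak compactness of closed bounded convex subsets of $H_2$ reduces the one-point extension step to the finite intersection lemma, and you establish that lemma by studying the minimal dilation $\lambda^{\ast}$ for which $\Omega_{\lambda}$ is nonempty, using the parallelogram law to show $\Omega_{\lambda^{\ast}}$ is a singleton $z^{\ast}$, placing $z^{\ast}$ in the convex hull of the active $y_i$ by a separation argument (note that the active set $I$ is automatically nonempty, since if every constraint were slack one could lower $\lambda^{\ast}$), and then running the barycentre identity to force $\lambda^{\ast}\leq 1$. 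The boundedness addendum via composition with the non-expansive nearest-point projection onto $\overline{B}(0,M)$, followed by undoing the initial rescaling by $K$, is also exactly right. You correctly flag the role of the Hilbert structure of $H_2$: both the weak compactness used in the infinite-to-finite reduction (reflexivity) and the non-expansiveness of the metric projection require it, and the result genuinely fails for general Banach targets.
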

	
	 C. Fefferman has been working on a variety of versions of this problem, including one that deals with appropriately approximating $f$ with a \emph{smooth} map, which can be of enormous use in practice when one is collecting a finite sample of data (see for example \cite{Fefferman1, Fefferman2, Fefferman3}).
\section{Flows of Lipschitz vector fields}
	In this section, we aim to study the Lipschitz regularity (in time and space) of flows of Lipschitz vector fields. The answer to this question in the context of smooth vector fields both in the Euclidean and manifold setting is widely covered in the literature (cf. \cite{Lee} for example). Our further aim is to quantify these results. We start by giving a (simplified) definition of the flow of a vector field then stating the fundamental theorem of ordinary differential equations (O.D.E.s) which ensures the existence and uniqueness of flows:
	
	\begin{Def} Let $I$ be an open interval. Let $M$ be a $\mathcal{C}^1$-manifold and $A$ be a vector field on $M$. A $\mathcal{C}^1$-path $\gamma:I\to M$ is said to be an integral curve of $A$ if:
	\[\forall t\in I:\quad \gamma'(t)=A(\gamma(t))\]
	If $0\in I$, we say that $\gamma(0)$ is the starting point of $\gamma$. If furthermore  $U$ denotes a subset of $M$ and $\tilde{A}:I\times U\to M$ is such that, for every $x\in U$, $t\mapsto \tilde{A}(t,x)$ is an integral curve of $A$ starting at $x$, we say then that $\tilde{A}$ is a local flow (or global flow if $I\times U=\mathbb{R}\times M$) of $A$ on $I\times U$.
	\end{Def}

	\begin{Notation} Under the assumption of existence and if there is no risk of confusion, we will be denoting by $\tilde{A}$ the flow of a vector field $A$; by $\tilde{A}_t$, for $t\in\mathbb{R}$, the map $x\mapsto \tilde{A}(t,x)$; and by $\tilde{A}_x$, for $x\in M$, the map $t\mapsto \tilde{A}(t,x)$.\end{Notation}
	
	The following fundamental theorem is classical (see for example \cite{Lee}) and can be seen as a special case of Picard-Lindel\"olf's theorem dealing with differential equations driven by paths of bounded variation (see for example \cite{Flour}).

	\begin{theo}\label{ODEExist} Let $A$ be a $\textrm{Lip}-1$ vector field on a Banach space $E$. Then there exists a unique global flow of $A$ on $\mathbb{R}\times E$.
	\end{theo}
	The following Gronwall-type comparison lemma is going to be of use to us to obtain quantitative Lipschitz bounds for flows of Lipschitz vector fields (see \cite{Gronwall} or \cite{Lee} for a version with the language used here):
	\begin{lemma}[Comparison lemma]\label{ComparLemma} Let $I$ be an open interval and $E$ be a real inner product space. Let $u:I\to E$ be a differentiable map such that there exists $a>0$ and $b\geq 0$ such that:
	\[\forall t\in I: \|u'(t)\|\leq a\|u(t)\|+b\]
	Then, if $t_0\in I$, we have:
	\[\forall t\in I: \|u(t)\|\leq e^{a|t-t_0|}\|u(t_0)\|+\frac{b}{a}(e^{a|t-t_0|}-1)\]
	\end{lemma}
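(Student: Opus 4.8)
The plan is to reduce the estimate to a scalar differential inequality for $t\mapsto\|u(t)\|$ and then integrate it by means of an integrating factor. First I would prove the bound for $t\geq t_0$; the case $t\leq t_0$ then follows by applying that result to the time-reversed path $s\mapsto u(2t_0-s)$, which is differentiable on the reflected interval, satisfies the same hypothesis with the same constants $a,b$, and agrees with $u$ at $t_0$. Since for $t<t_0$ we have $2t_0-t>t_0$ and $\|2t_0-t-t_0\|=|t-t_0|$, the two cases combine into the stated inequality with $|t-t_0|$.

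For $t\geq t_0$ the only subtlety is that $t\mapsto\|u(t)\|$ need not be differentiable at zeros of $u$. I would get around this by regularizing: for $\varepsilon>0$ put $\rho_\varepsilon(t)=\sqrt{\|u(t)\|^2+\varepsilon^2}$ (using the Euclidean inner product on $\mathbb{R}^d$), which is everywhere differentiable and bounded below by $\varepsilon$. Differentiating $\rho_\varepsilon^2=\|u\|^2+\varepsilon^2$ gives $\rho_\varepsilon'(t)=\langle u(t),u'(t)\rangle/\rho_\varepsilon(t)$, whence by Cauchy--Schwarz and $\|u(t)\|\leq\rho_\varepsilon(t)$ one gets $|\rho_\varepsilon'(t)|\leq\|u'(t)\|\leq a\|u(t)\|+b\leq a\rho_\varepsilon(t)+b$ for every $t\in I$. (If one insists on an arbitrary norm on $\mathbb{R}^d$, this step is replaced by the observation that the upper right Dini derivative of $\|u(t)\|$ is at most $\|u'(t)\|$ by the triangle inequality, followed by the standard scalar comparison principle; the remainder of the argument is unchanged.)

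Next I would introduce the integrating factor: the function $w_\varepsilon(t)=e^{-a(t-t_0)}\rho_\varepsilon(t)$ is differentiable on $I$ with $w_\varepsilon'(t)=e^{-a(t-t_0)}\bigl(\rho_\varepsilon'(t)-a\rho_\varepsilon(t)\bigr)\leq b\,e^{-a(t-t_0)}$. Since a differentiable function whose derivative is dominated by the continuous function $b\,e^{-a(s-t_0)}$ satisfies the corresponding integrated bound (the difference of the primitive and $w_\varepsilon$ being nondecreasing by the mean value theorem), for $t\geq t_0$ we obtain $w_\varepsilon(t)-w_\varepsilon(t_0)\leq\frac{b}{a}\bigl(1-e^{-a(t-t_0)}\bigr)$, i.e. $\rho_\varepsilon(t)\leq e^{a(t-t_0)}\rho_\varepsilon(t_0)+\frac{b}{a}\bigl(e^{a(t-t_0)}-1\bigr)$. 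Letting $\varepsilon\to0^+$ yields $\rho_\varepsilon(t)\to\|u(t)\|$ and $\rho_\varepsilon(t_0)\to\|u(t_0)\|$, which is exactly the claimed estimate for $t\geq t_0$; combined with the time-reversed case this finishes the proof. When $a=0$ the integrating-factor step degenerates: one integrates $\rho_\varepsilon'\leq b$ directly to get $\|u(t)\|\leq\|u(t_0)\|+b|t-t_0|$, which is the $a\to0^+$ limit of the stated formula, so the statement is understood with that convention.

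I do not expect a real obstacle here, since this is a classical Gr\"onwall-type estimate; the only point that needs a little care is the failure of differentiability of the norm at zeros of $u$, which the $\rho_\varepsilon$-regularization (or, alternatively, a Dini-derivative argument) handles cleanly.
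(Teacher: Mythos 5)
Your proof is correct, and it covers the one genuine subtlety (the norm $t\mapsto\|u(t)\|$ need not be differentiable where $u$ vanishes) cleanly via the $\rho_\varepsilon=\sqrt{\|u\|^2+\varepsilon^2}$ regularization, or alternatively the Dini-derivative argument you sketch for a general norm. The paper itself does not prove this lemma --- it invokes it as a classical Gr\"onwall-type comparison estimate --- so there is no authorial proof to compare against, but your reduction to $t\geq t_0$ by time reversal, the integrating-factor bound $w_\varepsilon(t)=e^{-a(t-t_0)}\rho_\varepsilon(t)$ with $w_\varepsilon'\leq b\,e^{-a(t-t_0)}$, and the passage to the limit $\varepsilon\to0^+$ constitute a complete and standard derivation; your remark that the $a=0$ case must be read as the $a\to0^+$ limit (giving $\|u(t)\|\leq\|u(t_0)\|+b|t-t_0|$) is also the right reading of the statement, which as written divides by $a$ even though $a\geq0$ is allowed. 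One trivial typo: in the time-reversal paragraph you wrote $\|2t_0-t-t_0\|$ where you meant $|2t_0-t-t_0|$; this does not affect the argument.
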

	We start by studying the regularity of flows of Lipschitz-1 vector fields. For smoother vector fields, we will naturally encounter flows of vector fields that are only locally Lipschitz. This is the reason for which we make the following general claim:
	\begin{lemma}\label{FlowHolderBoundLOCAL}
	 Let $H$ be a Lip-1 vector field on a subset $V$ of a Banach space $E$. Let $T>0$ and $U$ be a subset of $E$. Let $G$ be a flow of $H$ on $(-T,T)\times U$ (we assume that $G((-T,T)\times U)\subseteq V$). Then $G$ is $1$-H\"older. More precisely, for all $t,\tilde{t}\in (-T,T)$ and $y,\tilde{y}\in U$, we have:
	\[\|G(t,y)-G(\tilde{t},\tilde{y})\|\leq e^{(|t|\wedge|\tilde{t}|)\|H\|_{\textrm{Lip}-1}}\|y-\tilde{y}\|+\|H\|_{\textrm{Lip}-1}|t-\tilde{t}|\]
	\end{lemma}
	\begin{proof}
	We start by proving that $G$ is uniformly space 1-H\"older:
	\[\forall t\in (-T,T),\forall y,\tilde{y}\in U: \|G(t,y)-G(t,\tilde{y})\|\leq e^{|t|\|H\|_{\textrm{Lip}-1}}\|y-\tilde{y}\|\]
	This is trivial in the case when $H$ is constant (on $V$). Assume then that $H$ is not constant and let $y,\tilde{y}\in U$. Define $u$ on $(-T,T)$ by the identity: $u(t)=G(t,y)-G(t,\tilde{y})$. Note that $u(0)=y-\tilde{y}$. $u$ is differentiable and:
	\[\forall t \in (-T,T): u'(t)=H(G(t,y))-H(G(t,\tilde{y}))\]
	Therefore, for all $t\in (-T,T)$, $\|u'(t)\|\leq\|H\|_{\textrm{Lip}-1}\|u(t)\|$. The comparison lemma \ref{ComparLemma} gives then the sought bound.\\
	Let $t,\tilde{t}\in (-T,T)$ and assume that $|t|\leq |\tilde{t}|$:
	\[\begin{array}{lcl}
	\|G(t,y)-G(\tilde{t},\tilde{y})\|&\leq&\|G(t,y)-G(t,\tilde{y})\|+\|G(t,\tilde{y})-G(\tilde{t},\tilde{y})\|\\
										&\leq&e^{|t|\|H\|_{\textrm{Lip}-1}}\|y-\tilde{y}\| +\|\int_{\tilde{t}}^t H(G(u,\tilde{y}))\mathrm{d}u\|\\
										&\leq&e^{|t|\|H\|_{\textrm{Lip}-1}}\|y-\tilde{y}\|+\|H\|_{\textrm{Lip}-1}|t-\tilde{t}|\\
	\end{array}\]
	Therefore, $G$ is $1$-H\"older continuous.\end{proof}
	Lemma \ref{FlowHolderBoundLOCAL} naturally gives us the following result about flows of Lipschitz-1 vector fields:
	\begin{Cor}\label{FlowHolderBound} Let $A$ be a $\textrm{Lip}-1$ vector field on a Banach space $E$ and $\tilde{A}$ its global flow. Then:
	\begin{itemize}
	\item $\forall t\in \mathbb{R},\forall y,\tilde{y}\in E: \|\tilde{A}(t,y)-\tilde{A}(t,\tilde{y})\|\leq e^{|t|\|A\|_{\textrm{Lip}-1}}\|y-\tilde{y}\|$.
	\item $\tilde{A}$ is locally $1$-H\"older: for all $t,\tilde{t}\in \mathbb{R}$ and $y,\tilde{y}\in E$:
	\[\|\tilde{A}(t,y)-\tilde{A}(\tilde{t},\tilde{y})\|\leq e^{(|t|\wedge|\tilde{t}|)\|A\|_{\textrm{Lip}-1}}\|y-\tilde{y}\|+\|A\|_{\infty}|t-\tilde{t}|\]
	\item $\forall T,r\in\mathbb{R}^*_+, \forall x_0\in E:\quad\tilde{A}((-T,T)\times B(x_0,r))\subseteq B(x_0,r+T\|A\|_{\textrm{Lip}-1})$. 
	\end{itemize}
	\end{Cor}
	\begin{proof}
	A straightforward consequence of lemma \ref{FlowHolderBoundLOCAL}.\end{proof}	
	We show now that the flows of differentiable vector fields are differentiable too:
	\begin{lemma}\label{LocalFlowisDiff} Let $0<\varepsilon \leq 1$ and $d\in \mathbb{N}^*$. Let $H$ be a Lip-$(1+\varepsilon)$ vector field defined on a subset $V$ of $\mathbb{R}^d$. Let $T>0$ and $U$ be an open subset of $\mathbb{R}^d$. Let $G$ be a flow of $H$ on $(-T,T)\times U$ (we assume that $G((-T,T)\times U)\subseteq V$). Then $G$ is continuously differentiable on $(-T,T)\times U$ and, if $(\vec{e}_1,\ldots,\vec{e}_d)$ is a basis for $\mathbb{R}^d$, then for all $(t,y)\in (-T,T)\times U$:
	\[\|\partial_t{G}(t,y)\|\leq\|H\|_{\textrm{Lip}-(1+\varepsilon)}\]
	and
	\[\|\partial_{x_i}{G}(t,y)\|\leq e^{T\|H\|_{\textrm{Lip}-1}}\|\vec{e}_i\|\]
	\end{lemma}
	\begin{proof} The result being trivial for $H=0$ (on $V$), we assume that $H\neq 0$. Let $y\in U$. By definition of the flow:
	\[ \forall t\in (-T,T):\quad G(t,y)=y+\int_0^t H(G(u,y))\mathrm{d}u\]
	As both $H$ and $G$ are continuous, $G(.,y)$ is then continuously differentiable and, for all $t\in\mathbb (-T,T): \partial_t{G}(t,y)=H(G(t,y))$. Moreover:
	\[\forall (t,y)\in(-T,T)\times U: \|\partial_t{G}(t,y)\|\leq\|H\|_{\infty,V} \leq\|H\|_{\textrm{Lip}-(1+\varepsilon)}\]
	We will prove now that $G$ is continously differentiable in space. Let $(\vec{e}_1,\ldots,\vec{e}_d)$ be a basis for $\mathbb{R}^d$. Let $i\in [\![1,d]\!]$. For $h\in \mathbb{R}^*$, we define the map $\Delta^i_h$ on $(-T,T)\times U$ by the relation:
	\[\Delta^i_h(t,y)=\frac{G(t,y+h\vec{e}_i)-G(t,y)}{h}\]
	We are going to show that the sequence $(\Delta^i_h)_{|h|>0}$ converges uniformly on $(-T,T)\times U$ (as $h$ goes to zero). Let $h\in \mathbb{R}^*$. For $(t,y)\in(-T,T)\times U$, lemma \ref{FlowHolderBoundLOCAL} gives the inequality:
	\begin{equation}\label{Controlderive}\|\Delta^i_h(t,y)\|\leq e^{T\|H\|_{\textrm{Lip}-1}}\|\vec{e}_i\| 
	\end{equation}
	$H$ being $\textrm{Lip}-(1+\varepsilon)$, let $S$ be the remainder map defined on $V^2$ with values in $\mathbb{R}^d$ such that, for all $a,b \in V$:
	\[ H(a)=H(b) + \mathrm{d}H(b)(a-b) + S(a,b)\]
	and\[ \|S(a,b)\|\leq\|H\|_{\textrm{Lip}-(1+\varepsilon)}\|a-b\|^{1+\varepsilon}\]
	$\Delta^i_h$ is obviously continuously differentiable in time. Let $(t,y)\in(-T,T)\times U$:
	\[\partial_t{\Delta^i_h}(t,y)=\mathrm{d}H(G(t,y))(\Delta^i_h(t,y)) + \frac{1}{h}S(G(t,y+h\vec{e}_i),G(t,y))\]
	Let $\tilde{h}\in \mathbb{R}^*$. From the calculation above and the inequality (\ref{Controlderive}), we get that:
	\[\begin{array}{lcl}
	\|\partial_t{\Delta^i_h}(t,y)-\partial_t{\Delta^i_{\tilde{h}}}(t,y)\|&\leq&\|\mathrm{d}H(G(t,y))(\Delta^i_h(t,y)-\Delta^i_{\tilde{h}}(t,y))\|+\\
															&&\|\frac{1}{h}S(G(t,y+h\vec{e}_i),G(t,y))\|+ \|\frac{1}{\tilde{h}}S(G(t,y+\tilde{h}\vec{e}_i),G(t,y))\|\\
															&\leq&\|H\|_{\textrm{Lip}-(1+\varepsilon)}(\|\Delta^i_h(t,y)-\Delta^i_{\tilde{h}}(t,y)\|+ \\
															&&|h|^{\varepsilon}\|\Delta^i_h(t,y)\|^{1+\varepsilon}+|\tilde{h}|^{\varepsilon}\|\Delta^i_{\tilde{h}}(t,y)\|^{1+\varepsilon})\\
															&\leq&\|H\|_{\textrm{Lip}-(1+\varepsilon)}(\|\Delta^i_h(t,y)-\Delta^i_{\tilde{h}}(t,y)\|+\\
															&&2(|h|\vee|\tilde{h}|)^{\varepsilon}(e^{T\|H\|_{\textrm{Lip}-1}}\|\vec{e}_i\|)^{1+\varepsilon})\\
	\end{array}\]
	Therefore, using the comparison lemma and the fact that $\Delta^i_h(0,y)=\Delta^i_{\tilde{h}}(0,y)=\vec{e}_i$, we get the following inequality:
	\[\|\Delta^i_h-\Delta^i_{\tilde{h}}\|_{\infty,(-T,T)\times U}\leq 2(|h|\vee|\tilde{h}|)^{\varepsilon}(e^{T\|H\|_{\textrm{Lip}-1}}\|\vec{e}_i\|)^{1+\varepsilon}(e^{T\|H\|_{\textrm{Lip}-(1+\varepsilon)}}-1)\]
	We therefore see that $(\Delta^i_h)_{|h|>0}$ converges uniformly on $(-T,T)\times U$ and that $\partial_{x_i}{G}$ exists (as its limit). As, for every $h$, $\Delta^i_h$ is continuous then $\partial_{x_i}{G}$ is also continuous. We also get the following bound by passing to the limit in the inequality (\ref{Controlderive}):
	\[\forall (t,y) \in (-T,T)\times U: \quad \|\partial_{x_i}{G}(t,y)\|\leq e^{T\|H\|_{\textrm{Lip}-1}}\|\vec{e}_i\|\]\end{proof}
	\begin{Cor}\label{GlobalFlowisDiff} Let $0<\varepsilon \leq 1$ and $d\in \mathbb{N}^*$. Let $A$ be a $\textrm{Lip}-(1+\varepsilon)$ vector field on $\mathbb{R}^d$ and $\tilde{A}$ be its global flow. Then $\tilde{A}$ is continuously differentiable and, if $(\vec{e}_1,\ldots,\vec{e}_d)$ is a basis for $\mathbb{R}^d$, then for all $(t,y)\in\mathbb{R}\times \mathbb{R}^d$:
	\[\|\partial_t{\tilde{A}}(t,y)\|\leq\|A\|_{\textrm{Lip}-(1+\varepsilon)}\]
	and
	\[\|\partial_{x_i}{\tilde{A}}(t,y)\|\leq e^{|t|\|A\|_{\textrm{Lip}-1}}\|\vec{e}_i\|\]
	\end{Cor}
	\begin{proof}
	This is a version of the previous lemma \ref{LocalFlowisDiff} where $U=V=\mathbb{R}^d$.
	\end{proof}
	\begin{Rem} It is worth noting that without any additional assumption on the vector field $A$ (other than it being $\textrm{Lip}-1$), one can show that, for every $y \in \mathbb{R}^d$, $\tilde{A}_y$ is actually $\textrm{Lip}-2$ on every bounded interval of time. However, for the techniques above to work, we need $A$ to be a little  smoother than $\textrm{Lip}-1$ ( $\textrm{Lip}-(1+\varepsilon)$ as in the above lemma, but $\mathcal{C}^1$ is enough) to have the space-differentiability of $\tilde{A}$.
	\end{Rem}
	\begin{lemma}\label{LocalFlowisLip} Let $0<\varepsilon \leq 1$ and $d\in \mathbb{N}^*$. Let $H$ be a Lip-$(1+\varepsilon)$ vector field defined on a subset $V$ of $\mathbb{R}^d$. Let $T>0$ and $U$ be an open subset of $\mathbb{R}^d$. Let $G$ be a flow of $H$ on $(-T,T)\times U$ (we assume that $G((-T,T)\times U)\subseteq V$). Then $\mathrm{d}G$ is Lipschitz-$\varepsilon$ on $(-T,T)\times U$ and its Lip-$\varepsilon$ norm is bounded from above by a constant depending only on $\|H\|_{\textrm{Lip}-(1+\varepsilon)}$, $\varepsilon$ and $T$.
	\end{lemma}
	\begin{proof}
	The lemma is trivial in the case where $H=0$. Assume then that $H\neq 0$. In the following, we endow $\mathbb{R}\times \mathbb{R}^d$ with the $l^{\infty}$ norm that we will denote by $N$.\\
	Let $(s,z) \in (-T,T)\times U$. Let $(t,y), (\tilde{t},\tilde{y})\in B((s,z),1/2)\cap((-T,T)\times U)$. Using the definition of a $\textrm{Lip}-1$ map and our preliminary study of $G$ (lemma \ref{FlowHolderBoundLOCAL}), we get the following inequality:
	\[\begin{array}{lcl}
	\|\partial_t{G}(t,y)-\partial_t{G}(\tilde{t},\tilde{y})\|&\leq&\|H\|_{\textrm{Lip}-1}\|G(t,y)-G(\tilde{t},\tilde{y})\|\\
															&\leq&\|H\|_{\textrm{Lip}-1}(e^{T\|H\|_{\textrm{Lip}-1}}\|y-\tilde{y}\|+\|H\|_{\textrm{Lip}-1}|t-\tilde{t}|)\\
															&\leq&\|H\|_{\textrm{Lip}-1}(e^{T\|H\|_{\textrm{Lip}-1}}+\|H\|_{\textrm{Lip}-1})N((t,y)-(\tilde{t},\tilde{y}))^{\varepsilon}\\
	\end{array}\]
	Hence, $\partial_t{G}$ is $\varepsilon$-H\"older on $B((s,z),1/2)\cap((-T,T)\times U)$.\\
	Let $(\vec{e}_1,\ldots,\vec{e}_d)$ be a basis for $\mathbb{R}^d$. Let $i\in[\![1,d]\!]$ and $(y,\tilde{y})\in U^2$. Define the map $v$ on $(-T,T)$ by the identity $v(t)=\partial_{x_i}{G}(t,y)-\partial_{x_i}{G}(t,\tilde{y})$. Since  $\partial_{x_i}{G}$ satisfies the following differential equation:
	\[\forall t \in (-T,T): \quad\partial_{x_i}{G}(t,y)=\vec{e}_i+\int_0^t\mathrm{d}H(G(u,y))\partial_{x_i}{G}(u,y)\mathrm{d}u\]
	$v$ is then continuously differentiable. Using the fact that $H$ is $\textrm{Lip}-(1+\varepsilon)$ on $V$, the controls obtained in lemmas \ref{FlowHolderBoundLOCAL} and \ref{LocalFlowisDiff}, we get the following inequality, for all $t\in(-T,T)$:
	\[\|v'(t)\|\leq \|\mathrm{d}H\|_{\textrm{Lip}-\varepsilon} e^{(1+\varepsilon) T\|H\|_{\textrm{Lip}-1}}\|\vec{e}_i\|\|y-\tilde{y}\|^{\varepsilon} + \|\mathrm{d}H\|_{\textrm{Lip}-\varepsilon}\|v(t)\|\]
	Noting that $\|H\|_{\textrm{Lip}-(1+\varepsilon)}>0$ and using the comparison lemma, we then get, for all $t \in (-T,T)$:
	\[ \|\partial_{x_i}{G}(t,y)-\partial_{x_i}{G}(t,\tilde{y})\|\leq e^{(1+\varepsilon) T\|H\|_{\textrm{Lip}-1}}\|\vec{e}_i\|\|y-\tilde{y}\|^{\varepsilon} (e^{T\|H\|_{\textrm{Lip}-(1+\varepsilon)}}-1)\]
	Let $(t,\tilde{t})\in (-T,T)^2$. Using successively the differential equation satisfied by $\partial_{x_i}{G}(.,\tilde{y})$, the fact that $H$ is $\textrm{Lip}-(1+\varepsilon)$ and finally the lemma \ref{LocalFlowisDiff}, one gets: 
	\[\begin{array}{lcl}
	\|\partial_{x_i}{G}(t,\tilde{y})-\partial_{x_i}{G}(\tilde{t},\tilde{y})\|\|&=&\|\int_{\tilde{t}}^t(\mathrm{d}H(G(u,\tilde{y}))\partial_{x_i}{G}(u,\tilde{y})\mathrm{d}u\|\\
																&\leq&|t-\tilde{t}|\|\mathrm{d}H\|_{\infty}\|\partial_{x_i}{G}(.,\tilde{y})\|_{\infty,[\tilde{t},t]}\\
																&\leq&|t-\tilde{t}|\|H\|_{\textrm{Lip}-(1+\varepsilon)} e^{T\|H\|_{\textrm{Lip}-1}}\|\vec{e}_i\|\\
	\end{array}\]
	Finally we get from all the above the following inequality for every $(t,y), (\tilde{t},\tilde{y})\in B((s,z),1/2)\cap((-T,T)\times U)$:
	\[\begin{array}{lcl}
	\|\partial_{x_i}{G}(t,y)-\partial_{x_i}{G}(\tilde{t},\tilde{y})\|&\leq&(e^{T(\varepsilon \|H\|_{\textrm{Lip}-1}+\|H\|_{\textrm{Lip}-(1+\varepsilon)})} +\|H\|_{\textrm{Lip}-(1+\varepsilon)})\\
																&&e^{T\|H\|_{\textrm{Lip}-1}}\|\vec{e}_i\| N((t,y)-(\tilde{t},\tilde{y}))^{\varepsilon}\\
	\end{array}\]
	Therefore, $\partial_{x_i}{G}$ is $\varepsilon$-H\"older on $ B((s,z),1/2)\cap((-T,T)\times U)$. Define the following constants:
	\[\left\{\begin{array}{lcl}
	M_1&=&\|H\|_{\textrm{Lip}-1 }(e^{T\|H\|_{\textrm{Lip}-1 }}+\|H\|_{\textrm{Lip}-1 })\\
	M_2&=&(e^{T(\varepsilon \|H\|_{\textrm{Lip}-1 }+\|H\|_{\textrm{Lip}-(1+\varepsilon) })} +\|H\|_{\textrm{Lip}-(1+\varepsilon) }) e^{T\|H\|_{\textrm{Lip}-1 }}\underset{1\leq i \leq d}\sum \|\vec{e}_i\|\\
	M_{\infty}&=&\|H\|_{\textrm{Lip}-(1+\varepsilon) }+ e^{T\|H\|_{\textrm{Lip}-1 }}\underset{1\leq i \leq d}\sum\|\vec{e}_i\|\\
	\end{array}\right.\]
	When restricted to $B((s,z),1/2)\cap((-T,T)\times U)$, $\mathrm{d}G$ is Lipschitz-$\varepsilon$ with norm upper-bounded by $\max(M_1+M_2,M_{\infty})$. Therefore $\mathrm{d}G$ is Lipschitz-$\varepsilon$ on $(-T,T)\times U$ and there exists a constant $c_{\varepsilon}$ such that:
	\[\|\mathrm{d}G\|_{\textrm{Lip}-\varepsilon}\leq c_{\varepsilon} \max(M_1+M_2,M_{\infty})\]	
	\end{proof}
	\begin{Cor}\label{GlobalFlowisLip} Let $0<\varepsilon \leq 1$. Let $d\in\mathbb{N}^*$ and $T>0$. Let and $A$ be a $\textrm{Lip}-(1+\varepsilon)$ vector field on $\mathbb{R}^d$ and $\tilde{A}$ its global flow. Then $\mathrm{d}\tilde{A}$ is Lipschitz-$\varepsilon$ on $(-T,T)\times \mathbb{R}^d$ and its Lip-$\varepsilon$ norm is bounded from above by a constant depending only on $\|A\|_{\textrm{Lip}-(1+\varepsilon)}$, $\varepsilon$ and $T$.
	\end{Cor}
	\begin{proof} This is a special case of lemma \ref{LocalFlowisLip}, with $U=V=\mathbb{R}^d$.
	\end{proof}
	Before we turn our intention to the main theorem of this section, we first make the link, in terms of Lipschitz regularity, between the derivative $\mathrm{d}H$ of a smooth map $H$ and the representation $\mathrm{d}H: (x,y)\mapsto \mathrm{d}H(x)(y)$.
	\begin{lemma}\label{DerivativeIsTSLipschitz} Let $\gamma> 0$, $M>0$ and $E, F$ and $G$ be three normed vector spaces. We assume that $((E\otimes F)^{\otimes k})_{k\geq 1}$ are endowed with norms satisfying the projective property. Let $V$ be a subset of $E$ and $f:V\to \mathcal{L}( F,G)$ be a Lip-$\gamma$ map. Then
	\[\begin{array}{rccl}
	\widehat{f} :& V \times F& \longrightarrow & G \\
		& (x_1,x_2) & \longmapsto & f(x_1)(x_2)\\
	\end{array}
	\]
	is Lip-$\gamma$ when restricted to $V \times B(0,M)$. Moreover, there exists a constant $m_{\gamma}$ depending only on $\gamma$ such that:
	\[\|\widehat{f}\|_{\textrm{Lip}-\gamma}\leq m_{\gamma} \max(1,M) \|f\|_{\textrm{Lip}-\gamma}\]
	If $\gamma>1$, then one can take $m_{\gamma}=1$.
	\end{lemma}
	\begin{proof}
	Let $H_1$ and $H_2$ be the maps defined on $V \times F$ by:
	\[H_1(x_1,x_2)=f(x_1)\in \mathcal{L}_c(F,G) \quad ; \quad H_2(x_1,x_2)=x_2 \in F\]
	Both maps are obviously Lip-$\gamma$ on $V \times B(0,M)$. On the one hand, we have $\|H_1\|_{\textrm{Lip}-\gamma}= \|f\|_{\textrm{Lip}-\gamma}$. On the other hand, if $\gamma>1$ then $\|H_2\|_{\textrm{Lip}-\gamma}\leq \max(1,M)$ and if $\gamma\leq 1$, then $H_2$ is Lip-$\gamma$ on the intersection of open balls of radius less than 1/2 with $V \times B(0,M)$ with norm less than $\max(1,M)$ and using lemma \ref{LocalLipGen} there exists a constant $m_{\gamma}$ depending only on $\gamma$  such that 
	\[ 	\|H_2\|_{\textrm{Lip}-\gamma}\leq m_\gamma \max(1,M) \]
	Now consider the continuous bilinear map:
	\[\begin{array}{rccl}
	B :& \mathcal{L}_c(F,G) \times F& \longrightarrow & G \\
		& (u,z) & \longmapsto & u(z)\\
	\end{array}
	\]
	Then $\widehat{f}=B(H_1,H_2)$ and is Lip-$\gamma$ by proposition \ref{Compobifunc}. Moreover:
	\[\|\widehat{f}\|_{\textrm{Lip}-\gamma}\leq m_\gamma \max(1,M) \|f\|_{\textrm{Lip}-\gamma}\]
	\end{proof}	
		Loosely speaking, the following lemma is a converse of the previous one: for a two-variable map that is linear in one of its variables (like Fr\'echet derivatives of smooth maps), it is enough to show that it is Lipschitz on a bounded set:
	\begin{lemma}\label{LipSLareGLip} Let $\gamma>0$, $r>0$, $d \in \mathbb{N}^*$ and $E$ and $G$ be two normed vector spaces. Let $U$ be a subset of $\mathbb{R}^d$.  We assume that there exists $C\in\mathbb{R}$ such that for all $k\in[\![1,n]\!]$, the norms on $(E\times \mathbb{R}^d)^{\otimes k}$ and $E^{\otimes k}$ satisfy the following inequality:
	\[\forall v_1,\ldots, v_k \in E: \Big\| (v_1,0)\otimes \cdots \otimes (v_k,0) \Big\|
	\leq C \Big\| v_1\otimes \cdots \otimes v_k\Big\|\]
	Let $f:U\times \mathbb{R}^d\to E$ be a map linear with respect to its second variable such that $f_0:=f_{|U\times B(0,r)}$ is Lip-$\gamma$. Then:
	\[\begin{array}{cccl}
	\widehat{f}:&	U & \longrightarrow & \mathcal{L}(\mathbb{R}^d,G)\\
			& x	& \longmapsto &  f(x,.)\\
	\end{array}
	\]
	is Lip-$\gamma$ and we have $\|\widehat{f}\|_{\textrm{Lip}-\gamma}\leq \max(1,C)\|f\|_{\textrm{Lip}-\gamma}$.
	\end{lemma}
	\begin{proof} We denote by $(e_1,\ldots,e_d)$ a basis of $\mathbb{R}^d$ and by $(e_1^{*},\ldots,e_d^{*})$ its dual basis. Without loss of generality, we assume that for all $i\in [\![1,d]\!]$. $\|e_i\|<r$. Now consider the maps:
	\[\begin{array}{ccclccccl}
	u:&	G^d & \longrightarrow & \mathcal{L}((\mathbb{R}^d),G) &\quad \textrm{and}\quad & Z:&	U & \longrightarrow &G^d\\
		& (v_i)_{1\leq i \leq d}	& \longmapsto & \sum v_ie_i^*&	&& x	& \longmapsto & (f(x,e_i))_{1\leq i \leq d} \\
	\end{array}
	\]
	then $\widehat{f}=u \circ Z$. It is an easy exercise to show that $Z$ is Lip-$\gamma$ and that
	\[\|Z\|_{\textrm{Lip}-\gamma}\leq \max(1,C)\|f\|_{\textrm{Lip}-\gamma}\] 
	As $u$ is linear with norm 1 then, by proposition \ref{Compolinfunc}, $\widehat{f}$ is Lip-$\gamma$ with the required upper-bound on its Lipschitz norm.
	\end{proof}
	Finally, we show that flows of Lipschitz vector fields are also Lipschitz on bounded sets and have a well-controlled Lipschitz norm:
	\begin{theo}\label{FlowLocSmoothTheoGen} Let $0<\varepsilon \leq 1$ and $n,d\in \mathbb{N}^*$. Let $H$ be a Lip-$(n+\varepsilon)$ vector field defined on a subset $V$ of $\mathbb{R}^d$. Let $T>0$ and $U$ be an open convex subset of $\mathbb{R}^d$. Let $G$ be a flow of $H$ on $(-T,T)\times U$ (assuming that $G((-T,T)\times U)\subseteq V$). Then $\mathrm{d}G$ is Lipschitz-$(n+\varepsilon-1)$ on $(-T,T)\times U$ and its Lip-$(n+\varepsilon-1)$ norm is bounded from above by a constant depending only on $\|H\|_{\textrm{Lip}-(n+\varepsilon)}$, $\varepsilon$ and $T$.
	\end{theo}
	\begin{proof}
	We will prove the theorem by induction. Lemma \ref{LocalFlowisLip} deals with the case $n=1$.
	Let $n\in\mathbb{N}^*$. Assume that the assertion is true for $\textrm{Lip}-(n+\varepsilon)$ vector fields and let us prove it when $H$ is $\textrm{Lip}-(n+1+\varepsilon)$. By the induction hypothesis, we know that $\mathrm{d}G$ is Lipschitz-$(n+\varepsilon-1)$ on $(-T,T)\times U$ and there exists a constant $C$ depending only on $T$, $n$, $\varepsilon$ and $\|H\|_{\textrm{Lip}-(n+\varepsilon)}$ such that $\|\mathrm{d}G\|_{\textrm{Lip}-(n+\varepsilon-1)}\leq C$. In particular, by proposition \ref{AlmostLipDiffMaps}, $G$ is almost Lip-$(n+\varepsilon)$.
	$G$ being a flow of $H$, then $\partial_{t}{G}=H\circ G$. As $H$ is Lip-$(n+\varepsilon)$ and $G$ is almost Lip-$(n+\varepsilon)$, then, by theorem \ref{CompolipfuncCASE22}, $\partial_{t}{G}$ is $\textrm{Lip}-(n+\varepsilon)$ and $\|\partial_{t}{G}\|_{\textrm{Lip}-(n+\varepsilon)}$ can be upper-bounded by a constant depending only on $T$, $n$, $\varepsilon$ and $\|H\|_{\textrm{Lip}-(n+1+\varepsilon)}$.\\
	Let us denote by $\mathrm{d}_x G$ the spatial derivative of $G$, i.e.:
	\[\begin{array}{rccl}
	\mathrm{d}_x G: & (-T,T)\times U & \longrightarrow &\mathcal{L}(\mathbb{R}^d,\mathbb{R}^d) \\
		& (t,y) & \longmapsto & (b\mapsto \mathrm{d}G(t,y)(0,b)=\sum_{1}^d b_i\partial_{x_i}G(t,y))\\
	\end{array}
	\]
	Let $\tilde{A}$ be defined as:
	\[\begin{array}{rccl}
	\tilde{A}: & (-T,T)\times U \times\mathbb{R}^d & \longrightarrow & V\times\mathbb{R}^d \\
		& (t,y,b) & \longmapsto & (G(t,y), \mathrm{d}_x G(t,y)(b))\\
	\end{array}
	\]
	Then $\tilde{A}(0,y,b)=(y, b)$. Moreover, $\tilde{A}$ is differentiable in the time variable and:
	\[\partial_t \tilde{A}(t,y,b)=(H(G(t,y)), \mathrm{d}H(G(t,y))(\mathrm{d}_x G(t,y)(b)))\]
	Now define the vector field:
	\[\begin{array}{rccl}
	A: & V\times\mathbb{R}^d& \longrightarrow & \mathbb{R}^d\times\mathbb{R}^d \\
		& (x_1,x_2) & \longmapsto & (H(x_1), \mathrm{d}H(x_1)(x_2))\\
	\end{array}
	\]
	so that $\partial_t \tilde{A}(t,y,b)=A(\tilde{A}(t,y,b))$ and $\tilde{A}$ is the flow of $A$ on $(-T,T)\times U \times\mathbb{R}^d$. Moreover, by our assumption on the values of $G$ and lemma \ref{LocalFlowisDiff}:
	\[\forall r>0:\quad \tilde{A}((-T,T)\times U\times B(0,r))\subseteq
	 V \times  
	B(0,r e^{T\|H\|_{\textrm{Lip}-1}})
	\]
	By lemma \ref{DerivativeIsTSLipschitz} , $A$ is $\textrm{Lip}-(n+\varepsilon)$ on $V\times B(0,1)$ and there exists a constant $M_\gamma$ depending only on $\gamma$ such that:
	\[\|A\|_{\textrm{Lip}-(n+\varepsilon), V\times B(0,1)}\leq m_\gamma \|H\|_{\textrm{Lip}-(n+\varepsilon)} \]
	By the induction hypothesis, we can claim that $\tilde{A}$ is almost $\textrm{Lip}-(n+\varepsilon)$ on $(-T,T)\times U \times B(0,e^{-T\|H\|_{\textrm{Lip}-1}})$ and so is the map $(t,y,b)\rightarrow\mathrm{d}_x G(t,y)(b)$. Since the latter is also bounded (lemma \ref{LocalFlowisDiff}), then it is $\textrm{Lip}-(n+\varepsilon)$ on $(-T,T)\times U \times B(0,e^{-T\|H\|_{\textrm{Lip}-1}})$. Hence (lemma \ref{LipSLareGLip}), $\mathrm{d}_x G$ is $\textrm{Lip}-(n+\varepsilon)$ on $(-T,T)\times U$ which gives the result.
	\end{proof}
	\begin{Cor}\label{FlowGlobalSmoothTheoGen} Let $n,d \in \mathbb{N}^*$ and $0<\varepsilon \leq 1$. Let $A$ be a $\textrm{Lip}-(n+\varepsilon)$ vector field on $\mathbb{R}^d$. Then $\tilde{A}$ is almost $\textrm{Lip}-(n+\varepsilon)$ on $(-T,T)\times \mathbb{R}^d$ and there exists a constant $C$ depending only on $T$, $n$, $\varepsilon$ and $\|A\|_{\textrm{Lip}-(n+\varepsilon)}$ such that $\|\mathrm{d}\tilde{A}\|_{\textrm{Lip}-(n+\varepsilon)}\leq C$.
	\end{Cor}
	
	\section{Constant Rank theorems for Lipschitz maps}
	The two versions of the constant rank theorem in this section and the related techniques are classical in the case of smooth maps and the literature is abundant in this matter (see for example \cite{Lee}). As the reader may notice, and in the same spirit of almost Lipschitz maps, we will only be assuming that the derivatives are Lipschitz (instead of the maps themselves) as this is a less demanding requirement to get our quantitative estimates.\\
	
	We will be working in the finite-dimensional case and will assume that the norms on tensor spaces satisfy all the norm properties presented in section \ref{sec:lip-maps}. Finite dimensional vector spaces are endowed with the $l^{\infty}$ norm while norms of continuous linear maps and matrices are computed as subordinate norms.	The statement of the theorems and their subsequent proofs adapt easily in the case of other norms.
	
	\subsection{The inverse function theorem}
	
	When working with a Lipschitz map that is of maximal rank at a given point, one can quantify the size of the domain on which said map stays of maximal rank:
	\begin{lemma}\label{QuantInvertibility} Let $\gamma,M_1$ and $M_2$ be three positive real numbers. Let $E$ and $H$ be normed vector spaces. Let $U$ be a subset of $E$, $x_0\in U$ and $f:U\to H$ be a Lip-$\gamma$ map such that $\|f\|_{\textrm{Lip-}\gamma}\leq M_1$. Then:
	\begin{enumerate}
	\item There exists $\delta >0$ depending only on $\gamma$ and $M_1M_2$ such that:
	\[\forall x \in \overline{B(x_0,\delta)}\cap U: \quad \|f(x)-f(x_0)\|\leq \frac{1}{2 M_2}\]
	 In particular, if $H$ is a Banach algebra, $f(x_0)$ is invertible and $\|f(x_0)^{-1}\|\leq M_2$, then $f$ is invertible on $\overline{B(x_0,\delta)}\cap U$.
	\item Consider the case of $H=\mathcal{M}_{m,p}(\mathbb{R})$ endowed with an algebra norm $\|.\|$, with $m,p \in \mathbb{N}^*$. Assume that $f=(f_{i,j})_{(i, j)\in[\![1,m]\!]\times [\![1,p]\!]}$ is of rank less than or equal to $k\in\mathbb{N}$ and that $f(x_0)$ is of maximal rank $k$. Let $(i_1,\ldots,i_k)$ and $(j_1,\ldots,j_k)$ be, respectively, strictly ordered subsets of $[\![1,m]\!]$ and $[\![1,p]\!]$ such that $M=(f_{i_r,j_l}(x_0))_{1\leq r,l\leq k}$ is invertible. We assume that $\|M^{-1}\|\leq M_2$. Then there exists $\delta >0$ that depends only on $\gamma$ and $M_1M_2$ such that, for all $x \in \overline{B(x_0,\delta)}\cap U$, $f(x)$ is of rank $k$.
	\end{enumerate}
	\end{lemma}
	\begin{proof} \begin{enumerate}\item Let $n\in \mathbb{N}$ and $\varepsilon\in (0,1]$ such that $\gamma=n+\varepsilon$. Then, by the Taylor expansion of $f$ around $x_0$ as a Lipschitz map, we get for all $x \in U$:
	\[ \|f(x)-f(x_0)\|\leq M_1\left(\sum\limits_{k=1}^{n} \frac{\|x-x_0\|^k}{k!} + \|x-x_0\|^{n+\varepsilon}\right)\]
	It suffices to choose $\delta$ such that:
	\[\forall 0\leq t\leq\delta: \quad \sum\limits_{k=1}^{n} \frac{t^k}{k!} + t^{n+\varepsilon}\leq \frac{1}{2M_1M_2}\]
	which proves the claim. Assume now that $H$ is a Banach algebra, $f(x_0)$ is invertible and $\|f(x_0)^{-1}\|\leq M_2$. Then all elements of $B(f(x_0),\|f(x_0)^{-1}\|^{-1})$ are invertible. Since $M_2^{-1}\leq \|f(x_0)^{-1}\|^{-1} $, we have the result.
	\item The previous result insures that we can find $\delta>0$ that depends only on $\gamma$ and $M_1M_2$ such that the square matrix $(f_{i_r,j_l}(x))_{1\leq r,l\leq k}$ is invertible for all $x \in \overline{B(x_0,\delta)}\cap U$. Therefore the rank of $(f_{i,j}(x))_{(i, j)\in[\![1,m]\!]\times [\![1,p]\!]}$ is larger than or equal to $k$ on  $\overline{B(x_0,\delta)}\cap U$. Since the rank of $f$ is always less than or equal to $k$, then $f(x)$ is necessarily of rank $k$ on $\overline{B(x_0,\delta)}\cap U$.
	\end{enumerate}\end{proof}
	\begin{Rem} It is clear that the results of lemma \ref{QuantInvertibility} remain essentially true in the case of almost Lipschitz maps.
	\end{Rem}
	
	\begin{lemma}\label{InjInequalities} Let $\gamma>0$. Let $U$ be an open convex subset of a Banach space $E$. let $\varphi: U\to E$ be a differentiable map such that $\mathrm{d}\varphi$ is $\textrm{Lip}-\gamma$. Let $x_0\in E$ and assume that $\mathrm{d}\varphi(x_0)$ is invertible. Then, for every $M_1>0$ and $M_2>0$ such that $\|\mathrm{d}\varphi\|_{\textrm{Lip}-\gamma}\leq M_1$ and $\|\mathrm{d}\varphi(x_0)^{-1}\|\leq M_2$, there exists a positive constant $\delta$, depending only on $\gamma$ and $M_1 M_2$, such that we have the following inequalities for all $x,\tilde{x} \in \overline{B(x_0,\delta)}\cap U$:
	\[ \|\mathrm{d}\varphi(x)-\mathrm{d}\varphi(x_0)\|\leq \frac{1}{2 M_2}\]
	\[ \|\mathrm{d}\varphi(x_0)^{-1}(\varphi(x)-\varphi(\tilde{x}))-(x-\tilde{x})\|\leq \frac{1}{2}\|x-\tilde{x}\|\]
	\[ \frac{1}{2}\|x-\tilde{x}\|\leq\|\mathrm{d}\varphi(x_0)^{-1}(\varphi(x)-\varphi(\tilde{x}))\|\leq\frac{3}{2}\|x-\tilde{x}\|\]
	In particular, $\varphi$ is injective on $\overline{B(x_0,\delta)}\cap U$.
	\end{lemma}
	\begin{proof} Let $M_1, M_2>0$ and assume that $\|\mathrm{d}\varphi\|_{\textrm{Lip}-\gamma}\leq M_1$ and $\|\mathrm{d}\varphi(x_0)^{-1}\|\leq M_2$.
	As $\mathrm{d}\varphi$ is Lipschitz, then by lemma \ref{QuantInvertibility}, we can find $\delta >0$ depending only on $\gamma$ and $M_1M_2$ such that:
	\[\forall x \in \overline{B(x_0,\delta)}\cap U: \quad \|\mathrm{d}\varphi(x)-\mathrm{d}\varphi(x_0)\|\leq \frac{1}{2 M_2}\left(\leq \frac{1}{2\|\mathrm{d}\varphi(x_0)^{-1}\|}\right)\]
	For all $x,\tilde{x} \in \overline{B(x_0,\delta)}\cap U$, we have then:
	\[\|\mathrm{d}\varphi(x_0)^{-1}\circ\mathrm{d}\varphi(x)-\mathrm{Id}\|\leq \frac{1}{2}\]
	and consequently (since $\overline{B(x_0,\delta)}\cap U$ is convex):
	\[\|\mathrm{d}\varphi(x_0)^{-1}(\varphi(x)-\varphi(\tilde{x}))-(x-\tilde{x})\|\leq \frac{1}{2}\|x-\tilde{x}\|\]
	Hence:
	\[\frac{1}{2}\|x-\tilde{x}\|\leq\|\mathrm{d}\varphi(x_0)^{-1}(\varphi(x)-\varphi(\tilde{x}))\|\leq\frac{3}{2}\|x-\tilde{x}\|\]			 	\end{proof}	
	\begin{Def} Let $\gamma > 0$. Let $E$ and $F$ be two normed vector spaces, $U$ be a subset of E and $V$ a subset of $F$. A map $f:U\to V$ is said to be a Lipschitz diffeomorphism of degree $\gamma$ (a $\textrm{Lip}-\gamma$ diffeomorphism in short) if $f$ is $\textrm{Lip}-\gamma$ and bijective and $f^{-1}$ is also $\textrm{Lip}-\gamma$. We define in a similar way almost $\textrm{Lip}-\gamma$ diffeomorphisms.
	\end{Def}
	\begin{lemma}\label{InvFuncHomeomorphism} Let $\gamma, R >0$. Let $E$ be a Banach space, $x_0\in E$ and let $\varphi:B(x_0,R)\to E$ be a differentiable map such that $\mathrm{d}\varphi$ is $\textrm{Lip}-\gamma$. Assume that $\mathrm{d}\varphi(x_0)$ is invertible. Then, for every $M_1>0$ and $M_2>0$ such that $\|\mathrm{d}\varphi\|_{\textrm{Lip}-\gamma}\leq M_1$ and $\|\mathrm{d}\varphi(x_0)^{-1}\|\leq M_2$, there exists a constant $\delta$, depending only on $\gamma$ and $M_1 M_2$, such that for every $\alpha$ such that $\alpha< R$ and $0<\alpha\leq \delta$, the map $\varphi:B(x_0,\alpha)\cap \varphi^{-1}(V_0)\to V_0$, where $V_0=\varphi(x_0)+\mathrm{d}\varphi(x_0)(B(0,\alpha/2))$, is a homeomorphism, and we have $B(x_0,\alpha/3)\subseteq B(x_0,\alpha)\cap \varphi^{-1}(V_0)$. Moreover, $\varphi^{-1}$ is 1-H\"older with $\|\varphi^{-1}\|_{1}\leq 2M_2$.
	\end{lemma}
	\begin{proof}
	Let $M_1, M_2>0$ and assume that $\|\mathrm{d}\varphi\|_{\textrm{Lip}-\gamma}\leq M_1$ and $\|\mathrm{d}\varphi(x_0)^{-1}\|\leq M_2$. Let $\delta >0$  be a constant depending only on $\gamma$ and $M_1M_2$ such that the inequalities of lemma \ref{InjInequalities} hold true.
	Let $y\in V_0$. We will prove that there exists a unique point $x\in B(x_0,\alpha)$ such that $\varphi(x)=y$. Let $G$ be the map:
	\[\begin{array}{rccl}
	G:&\overline{B(x_0,\alpha)}&\longrightarrow&E\\
	&x&\longmapsto&x+\mathrm{d}\varphi(x_0)^{-1}(y-\varphi(x))\\
	\end{array}\]
	Note that $x\in \overline{B(x_0,\alpha)}$ is a fixed point of $G$ if and only if $\varphi(x)=y$. Let $x\in \overline{B(x_0,\alpha)}$, then by proposition \ref{InjInequalities}:
	\[\|G(x)-x_0\|\leq \|\mathrm{d}\varphi(x_0)^{-1}(y-\varphi(x_0))\|+ \|x-x_0+\mathrm{d}\varphi(x_0)^{-1}(\varphi(x)-\varphi(x_0))\|< \alpha\]
	Therefore $G(\overline{B(x_0,\alpha)})\subseteq\overline{B(x_0,\alpha)}$ and by lemma \ref{InjInequalities}, $G$ is a contraction. It has therefore a unique fixed point; denote it by $\tilde{x}$. Then we have:
	\[\|\tilde{x}-x_0\|=\|G(\tilde{x})-x_0\|< \alpha\]
	Hence $\tilde{x}\in B(x_0,\alpha)$, which proves the claim. Note that by the inequalities obtained in lemma \ref{InjInequalities}, we have $B(x_0,\alpha/3)\subseteq \varphi^{-1}(V_0)$.\\
	We have shown that $\varphi: B(x_0,\alpha)\cap \varphi^{-1}(V_0)\to V_0$ is continuous and bijective. Therefore, $\varphi^{-1}: V_0 \to B(x_0,\alpha)\cap \varphi^{-1}(V_0)$ exists and is $1$-H\"older by the inequalities of lemma \ref{InjInequalities}, hence continuous. We conclude then that $\varphi$ is a homeomorphism (from $B(x_0,\alpha)\cap \varphi^{-1}(V_0)$ onto $V_0$).\\
	\end{proof}
	Before we proceed to the main theorem of this subsection, we prove a Lipschitz version of what is sometimes also labelled the inverse function theorem:
	\begin{lemma}\label{Invertibility} Let $0<\varepsilon \leq 1$. Let $E$ a Banach space and $U$ be an open subset of $E$. Let $\varphi:U\to E$ be an almost Lip-$(1+\varepsilon)$ map. Let $x_0\in U$ and assume that $\mathrm{d}\varphi(x_0)$ is invertible with continuous inverse. Then, there exists an open neighborhood $V$ of $x_0$ such that $\varphi:V\to \varphi(V)$ is a homeomorphism. Moreover $\varphi^{-1}$ is differentiable at $\varphi(x_0)$ and $\mathrm{d}\varphi^{-1}(\varphi(x_0))=(\mathrm{d}\varphi(x_0))^{-1}$.
	\end{lemma}
	\begin{proof}
	Without loss of generality, we assume that $\varphi$ is almost Lipschitz on domains of size $1$ of $U$. By the results of proposition \ref{InvFuncHomeomorphism}, there exists an open neighborhood $V$ of $x_0$ such that $\varphi:V\to \varphi(V)$ is a homeomorphism and that $\varphi^{-1}$ is 1-H\"older. We define the remainder-like map $S$ around $x_0$ for every $h\in E$ such that $\varphi(x_0)+h \in \varphi(V)$ by the following:
	\[S(h)=\varphi^{-1}(\varphi(x_0)+h)-x_0-(\mathrm{d}\varphi(x_0))^{-1}(h)\]
	To show that $\varphi^{-1}$ is differentiable at $x_0$ with the required derivative, it suffices then to show that $S(h)\underset{h\to 0}{=}o(\|h\|)$. Writing the Taylor expansion of $\varphi$ as an almost Lip-$(1+\varepsilon)$ map around $x_0$ with remainder $R$ of order 0, we get, for small enough $h$:
	\[\begin{array}{ccl}
	S(h)&=&(\mathrm{d}\varphi(x_0))^{-1}\left(\mathrm{d}\varphi(x_0)(\varphi^{-1}(\varphi(x_0)+h)-x_0)-h \right)\\
		&=&-(\mathrm{d}\varphi(x_0))^{-1}\left( R(\varphi^{-1}(\varphi(x_0)+h),x_0)\right)\\
	\end{array}
	\]
	By the definition of an almost Lipschitz map, we have for small enough $h$:
	\[\|R(\varphi^{-1}(\varphi(x_0)+h),x_0)\|\leq \|\varphi\|_{1,\mathrm{Lip}-(1+\varepsilon)}\|\varphi^{-1}(\varphi(x_0)+h)-x_0\|^{1+\varepsilon}\]
	and as $\varphi^{-1}$ is H\"older:
	\[\|\varphi^{-1}(\varphi(x_0)+h)-x_0\|\leq \|\varphi^{-1}\|_1\|h\|\]
	Therefore:
	\[\|S(h)\|\leq \|(\mathrm{d}\varphi(x_0))^{-1}\|\|\varphi\|_{1,\mathrm{Lip}-(1+\varepsilon)}\|\varphi^{-1}\|_1^{1+\varepsilon}\|h\|^{1+\varepsilon}\]
	which gives the sought statement.
	\end{proof}
	\begin{theo}[Inverse Function]\label{Invfunc}  Let $\gamma>1$ and $R >0$. Let $E$ be a Banach space, $x_0\in E$. Let $\varphi:B(x_0,R)\to E$ be an almost $\textrm{Lip}-\gamma$ map and assume that $\mathrm{d}\varphi(x_0)$ is invertible. Then, for every $M_1>0$ and $M_2>0$ such that $\|\mathrm{d}\varphi\|_{\textrm{Lip}-(\gamma-1)}\leq M_1$ and $\|\mathrm{d}\varphi(x_0)^{-1}\|\leq M_2$, there exists a constant $\delta$, depending only on $\gamma$ and $M_1 M_2$, such that for every $\alpha$ such that $\alpha< R$ and $0<\alpha\leq \delta$, the map $\varphi:B(x_0,\alpha)\cap \varphi^{-1}(V_0)\to V_0$, where $V_0=\varphi(x_0)+\mathrm{d}\varphi(x_0)(B(0,\alpha/2))$, is an almost Lip-$\gamma$ diffeomorphism, and we have $B(x_0,\alpha/3)\subseteq B(x_0,\alpha)\cap \varphi^{-1}(V_0)$. Moreover, the Lip-$(\gamma-1)$ norm of $\mathrm{d}\varphi^{-1}$ can be bounded from above by a constant depending only on $\gamma$ and $M_1$ and $M_2$.
	\end{theo}
	\begin{proof} Let $M_1, M_2>0$ and assume that $\|\mathrm{d}\varphi\|_{\textrm{Lip}-(\gamma-1)}\leq M_1$ and $\|\mathrm{d}\varphi(x_0)^{-1}\|\leq M_2$. Let $\delta >0$  be a constant depending only on $\gamma$ and $M_1M_2$ such that the inequalities of lemma \ref{InjInequalities} hold true and $\alpha>0$ such that lemma \ref{InvFuncHomeomorphism} holds true. Finally, define 
	\[V_0=\varphi(x_0)+\mathrm{d}\varphi(x_0)(B(0,\alpha/2))\]
	First note that lemma \ref{Invertibility} (together with the inequalities of lemma \ref{InjInequalities} ) shows that $\varphi^{-1}$ is differentiable at every point of $V_0$ and that for every $y\in V_0$, $\mathrm{d}\varphi^{-1}(y)=(\mathrm{d}\varphi(\varphi^{-1}(y)))^{-1}$. Let $n\in\mathbb{N}^*$ and $\varepsilon \in (0,1]$ such that $\gamma=n+\varepsilon$. We will show the assertion of the theorem by induction. More precisely, we will prove that for every $k\in[\![1,n]\!]$, $\varphi^{-1}$ is almost $\textrm{Lip}-(k+\varepsilon)$ and that there exists a constant $H_k$ depending only on $n$, $\varepsilon$, $M_1$ and $M_2$ such that $\|\mathrm{d}\varphi^{-1}\|_{\textrm{Lip}-(k+\varepsilon-1)}\leq H_k$. But let us first make some remarks:
	\begin{itemize}
	\item $V_0$ being open and convex, we can then use the criteria in proposition \ref{AlmostLipDiffMaps}  to show that $\varphi^{-1}$ is almost $\textrm{Lip}-(n+\varepsilon)$.
	\item If we denote by $i$ the inversion map on $\overline{B_{\mathcal{L}(E)}(\mathrm{d}\varphi(x_0),\frac{1}{2M_2})}$ (which is a smooth map and thus Lipschitz), $\mathrm{d}\varphi^{-1}$ can then be seen as the composition map of $\varphi^{-1}$, $\mathrm{d}\varphi$ and $i$:
	\[\mathrm{d}\varphi^{-1}: V_0\overset{\varphi^{-1}}\longrightarrow B(x_0,\alpha)\overset{\mathrm{d}\varphi}\longrightarrow \overline{B_{\mathcal{L}(E)}(\mathrm{d}\varphi(x_0),\frac{1}{2M_2})}\overset{i}\longrightarrow \mathcal{L}(E)\]
	For $\eta>0$, let $C_{\eta}$ denote the $\textrm{Lip}-\eta$ norm of $i$.
	\end{itemize}
	We start now our induction. Let $y,\tilde{y} \in V_0$. Since, $\mathrm{d}\varphi$ is $\varepsilon$-H\"older and $\varphi^{-1}$ is $1$-H\"older (propostion \ref{InvFuncHomeomorphism}), we have then:
	\[\begin{array}{lcl}
	\|\mathrm{d}\varphi^{-1}(y)-\mathrm{d}\varphi^{-1}(\tilde{y})\|&\leq&\|i(\mathrm{d}\varphi(\varphi^{-1}(y)))-i(\mathrm{d}\varphi(\varphi^{-1}(\tilde{y})))\|\\
	&\leq&C_1\|\mathrm{d}\varphi(\varphi^{-1}(y))-\mathrm{d}\varphi(\varphi^{-1}(\tilde{y}))\|\\
	&\leq&C_1\|\mathrm{d}\varphi\|_{\textrm{Lip}-\varepsilon}\|\varphi^{-1}(y)-\varphi^{-1}(\tilde{y})\|^{\varepsilon}\\
	&\leq&(2M_2)^{\varepsilon} C_1\|\mathrm{d}\varphi\|_{\textrm{Lip}-\varepsilon}\|y-\tilde{y}\|^{\varepsilon}\\
	\end{array}\]
	Hence, $\mathrm{d}\varphi^{-1}$ is $\varepsilon$-H\"older. Written as a composition map, we see that $\mathrm{d}\varphi^{-1}$ is bounded (by $C_1$). Consequently, $\varphi^{-1}$ is almost $\textrm{Lip}-(1+\varepsilon)$. Following theorem \ref{EmbedLip2}, let $m>0$ be constant dependent only on $n$ and $\varepsilon$, such that $\|\mathrm{d}\varphi\|_{\textrm{Lip}-\varepsilon}\leq m \|\mathrm{d}\varphi\|_{\textrm{Lip}-(n+\varepsilon-1)}$. Then:
	\[\|\mathrm{d}\varphi^{-1}\|_{\textrm{Lip}-\varepsilon}\leq H_1 \textrm{ , where } H_1=C_1\max(1,(2M_2)^{\varepsilon}m M_1)\]
	Let $k\in[\![1,n-1]\!]$. We assume that $\varphi^{-1}$ is almost $\textrm{Lip}-(k+\varepsilon)$ and that there exists a constant $H_k$ depending only on $n$, $\varepsilon$, $M_1$ and $M_2$ such that:
	\[\|\mathrm{d}\varphi^{-1}\|_{\textrm{Lip}-(k+\varepsilon-1)}\leq H_k\]
	Then, by proposition \ref{AlmostLipDiffMaps}, the almost Lipschitz semi-norm of $\varphi^{-1}$ on $V_0$ is bounded from above by $H_k$. As $\mathrm{d}\varphi^{-1}=i \circ \mathrm{d}\varphi \circ \varphi^{-1}$, then, by theorem \ref{CompolipfuncCASE22}, $\mathrm{d}\varphi^{-1}$ is $\textrm{Lip}-(k+\varepsilon)$ with a Lipschitz norm less than a constant $H_{k+1}$ depending only on:
	\begin{itemize}
	\item $k$ and $\varepsilon$, constants of the problem;
	\item $C_{k+\varepsilon}$ (which depends only on $k$, $\varepsilon$, $M_1$ and $M_2$);
	\item $\|\mathrm{d}\varphi\|_{\textrm{Lip}-(k+\varepsilon)}$ (which can be controlled using only $M_1$, $k$, $n$ and $\varepsilon$ by corollary \ref{EmbedLip2});
	\item $H_k$ which, by the induction hypothesis, depends only on $k$, $\varepsilon$, $M_1$ and $M_2$.
	\end{itemize}
	This ends the induction. Consequently, $\varphi:  B(x_0,\alpha)\cap \varphi^{-1}(V_0)\to V_0$ is an almost $\textrm{Lip}-(n+\varepsilon)$ diffeomorphism.\end{proof}
	\begin{Rem} In the context and notations of theorem \ref{Invfunc}, $\varphi$ is in fact a Lipschitz diffeomorphism. We use the notion of almost Lipschitzness to highlight that the most interesting attributes quantitatively depend only on the control of the Lipschitz norm of the derivative.
	\end{Rem}
	\subsection{The constant rank theorem}

	\begin{Def}[Local Inverse] Let $E$ and $F$ be two sets. Let $U$ be a subset of $E$ and $\varphi:U\to F$ be a map. We say that an $E$-valued map $\hat{\varphi}$ defined on a subset of $F$ containing $\varphi(U)$ is a local inverse of $\varphi$ on $U$ if $\hat{\varphi}\circ\varphi_{|U}=\mathrm{Id}_U$.
	\end{Def}

	\begin{Def}[Immersions] Let $E$ and $F$ be two normed vector spaces. Let $U$ be a subset of $E$ and $\varphi:U\to F$ be a differentiable map. We say that $\varphi$ is an immersion if, for every $x\in U$, $\mathrm{d}\varphi(x)$ is injective.
	\end{Def}
	\begin{theo}[Constant Rank]\label{ConstantRankTheo} Let $\gamma>1$ and $(p,q,k)\in(\mathbb{N}^*)^2\times\mathbb{N}$ and $M_1$ and $M_2$ be two positive real numbers. Let $U$ be an open subset of $\mathbb{R}^p$ and
	\[\varphi=(\varphi_1,\ldots,\varphi_q):U\to \mathbb{R}^q\] 
	be an almost Lip-$\gamma$ map of rank at most $k$. Let $x_0 \in U$ such that $\mathrm{d}\varphi(x_0)$ is of rank $k$, and let $(i_1,\ldots,i_k)$ and $(j_1,\ldots,j_k)$ be, respectively, strictly ordered subsets of $[\![1,q]\!]$ and $[\![1,p]\!]$ such that $M=(\frac{\partial \varphi_{i_r}}{\partial x_{j_l}}(x_0))_{1\leq r,l\leq k}$ is invertible. We assume that:
		 \[\|\mathrm{d}\varphi\|_{\textrm{Lip}-(\gamma-1)}\leq M_1 \quad \textrm{and}\quad \|M^{-1}\|\leq M_2\] 
	Then, there exists a constant $c$, depending only on $\gamma$, $M_1$ and $M_2$ and:
	\begin{itemize}
	\item An almost $\textrm{Lip}-\gamma$ diffeomorphism $f:U_0\to f(U_0)$ defined on an open subset $U_0$ of $\mathbb{R}^p$ containing $x_0$ and such that $\max(\|\mathrm{d} f\|_{\textrm{Lip}-(\gamma-1)},\|\mathrm{d} f^{-1}\|_{\textrm{Lip}-(\gamma-1)})\leq c$.
	\item An almost $\textrm{Lip}-\gamma$ diffeomorphism $g:W\to g(W)$ defined on an open subset $W$ of $\mathbb{R}^q$ containing $\varphi(x_0)$ and such that $\max(\|\mathrm{d} g\|_{\textrm{Lip}-(\gamma-1)},\|\mathrm{d} g^{-1}\|_{\textrm{Lip}-(\gamma-1)})\leq c$.
	\end{itemize}
	such that, for all $(x_1,\ldots,x_p)\in f(U_0)$:
	\[g\circ \varphi \circ f^{-1}(x_1,\ldots,x_p)=(x_1,\ldots,x_k,0,\ldots,0)\]
	i.e.
		\[\begin{array}{ccc}
	 U_0(\subseteq \mathbb{R}^p)&\overset{\varphi}\longrightarrow&(\varphi(U_0)\subseteq)W(\subseteq \mathbb{R}^q)\\
	f\Big\downarrow&&\Big\downarrow g\\
	f(U_0)&\overset{\pi_{\mathbb{R}^k}}\longrightarrow&g(W)\\
	\end{array}\]
	If $k=p$, then there exists a constant $\delta$ depending only on $\gamma$, $M_1$ and $M_2$ such that for every $\alpha\in(0,\delta]$ satisfying $\overline{B(x_0,\alpha)}\subseteq U$, the above statement holds for $U_0=B(x_0,\frac{\alpha}{3\max(1,M_1M_2)})$, Moreover, $\varphi_{|U_0}$ is an injective immersion and admits a local inverse $\hat{\varphi}$ on $U_0$ that is almost $\textrm{Lip}-\gamma$ and such that:
	\[\|\mathrm{d}\hat{\varphi}\|_{\textrm{Lip}-\gamma}\leq c\]
	\end{theo}
	\begin{proof}
	We start first by two changes of variables that will enable us later to see $\varphi$ as a projection of the first $k$ variables around $x_0$. We will indentify $\mathbb{R}^p$ (resp. $\mathbb{R}^q$) with $\mathbb{R}^k\oplus\mathbb{R}^{p-k}$ (resp. $\mathbb{R}^k\oplus\mathbb{R}^{q-k}$). For $x\in \mathbb{R}^p$, we denote by $\overline{(x_{j_l})_{1\leq l\leq k}}$ the image of $x$ by the projection onto $\mathbb{R}^{p-k}$ which kernel is the span of $((e_{j_l})_{1\leq l\leq k})$, where $(e_i)_{1\leq i\leq p}$ is the canonical basis of $\mathbb{R}^p$. We define in a similar way the vector $\overline{(z_{i_r})_{1\leq r\leq k}}$ for $z\in \mathbb{R}^q$. Now, let $f_1$ and $g_1$ be the two following diffeomorphisms:
	\[\begin{array}{rccl}
	f_1:&\mathbb{R}^p&\to&\mathbb{R}^p\\
	&x&\mapsto&(((x-x_0)_{j_l})_{1\leq l\leq k},\overline{((x-x_0)_{j_l})_{1\leq l\leq k}})\\
	\end{array}\]
	and:
	\[\begin{array}{rccl}
	g_1:&\mathbb{R}^q&\to&\mathbb{R}^q\\
	&z&\mapsto&(((z-f(x_0))_{i_r})_{1\leq r\leq k},\overline{((z-f(x_0))_{i_r})_{1\leq r\leq k}})\\
	\end{array}\]
	Denote $U_1=f_1(U)$ and define $\tilde{\varphi}=(\tilde{\varphi}_1,\ldots,\tilde{\varphi}_q):=g_1\circ \varphi \circ {f_1}^{-1}$, $A=(\tilde{\varphi}_1,\ldots,\tilde{\varphi}_k)$ and $B=(\tilde{\varphi}_{k+1},\ldots,\tilde{\varphi}_q)$. By the identifications made above, we can write $\tilde{\varphi}$ as a map depending on two variables $(x_1,x_2)\in \mathbb{R}^k\oplus\mathbb{R}^{p-k}$ with values in the two-variable space $\mathbb{R}^k\oplus\mathbb{R}^{q-k}$:
	\[\tilde{\varphi}(x_1,x_2)=(A(x_1,x_2), B(x_1,x_2))\]
	and $\tilde{\varphi}$ is such that $\tilde{\varphi}(0,0)=(0,0)$. Trivially, $\mathrm{d}\tilde{\varphi}$, $\mathrm{d}A$ and $\mathrm{d}B$ are $\textrm{Lip}-(\gamma-1)$ and we have:
	\[\max(\|\mathrm{d}A\|_{\textrm{Lip}-(\gamma-1)},\|\mathrm{d}B\|_{\textrm{Lip}-(\gamma-1)})=\|\mathrm{d}\tilde{\varphi}\|_{\textrm{Lip}-(\gamma-1)}= \|\mathrm{d}\varphi\|_{\textrm{Lip}-(\gamma-1)}\]
	Let $f_2$ be the map defined on $U_1$ by: 
	\[f_2(x_1,x_2)=(A(x_1,x_2),x_2)\]
	Then $f_2$ is differentiable at every point of $U_1$ and $\mathrm{d}f_2$ is $\textrm{Lip}-(\gamma-1)$ with:
	\[\|\mathrm{d}f_2\|_{\textrm{Lip}-(\gamma-1)}\leq \max(1,\|\mathrm{d}\varphi\|_{\textrm{Lip}-(\gamma-1)})\leq \max(1,M_1)\]
	The representation matrix of $\mathrm{d}f_2(0)$ in the canonical basis of $\mathbb{R}^p$ is under the form:
	\[\left(\begin{array}{cc}
	M&\tilde{M}\\
	0& \mathrm{I}_{p-k}\\
	\end{array}\right)\]
	where $\tilde{M}$ is some matrix in $\mathcal{M}_{k,p-k}$ that can be directly obtained from $\mathrm{d}\varphi(x_0)$. Hence $\mathrm{d}f_2(0)$ is invertible (its inverse can be explicitly given) and there exists a real number $C_{p,q}$ depending only on $p$ and $q$ such that:
	\[\|\mathrm{d}f_2(0)^{-1}\| \leq C_{p,q} \max(1,M_1) \max(1,M_2)\]
	Using theorem \ref{Invfunc}, let $\delta$ and $c$ be constants depending on $\gamma$, $M_1$ and $M_2$ such that, for every $\alpha \in (0,\delta]$ such that $\overline{B(0,\alpha)}\subseteq U_1$, the map: 
	\[f_2:B(0,\alpha)\cap f_2^{-1}(H)\to H\]
	is an almost $\textrm{Lip}-\gamma$ diffeomorphism, where:
	\[H=\textrm{d}f_2(0)(B(0,\alpha/2))\]
	and (we drop again the restriction signs $._{|}$):
	\[\|\mathrm{d} f_2^{-1}\|_{\textrm{Lip}-(\gamma-1)}\leq c\]
	As $f_2^{-1}$ is uniformly bounded on $H$ (by $\alpha$), then it is Lip-$\gamma$ with norm bounded by a constant depending only on $\gamma$, $M_1$ and $M_2$. Note also that $B$ is Lip-$\gamma$ on the bounded domain $B(0,\alpha)$ with norm upper-bounded by a constant depending only on $\alpha$, $\gamma$ and $M_1$. We also have $B(0,\alpha/3)\subseteq B(0,\alpha)\cap f_2^{-1}(H)$ (this remark will be of relevance in the case $k=p$).\\
	We prove now that $\tilde{\varphi}\circ {f_2}^{-1}$ is independent of the second variable. Write ${f_2}^{-1}$ under the form: 
	\[{f_2}^{-1}(y_1,y_2)=(C(y_1,y_2),D(y_1,y_2))\]
	Then the identity $f_2\circ{f_2 }^{-1}=\mathrm{Id}$ yields:
	\[\forall (y_1,y_2)\in H:\quad D(y_1,y_2)=y_2\quad \textrm{and}\quad A(C(y_1,y_2),y_2)=y_1 \]
	$\tilde{\varphi}\circ {f_2}^{-1}$ is then given by:
	\[\forall (y_1,y_2)\in H:\quad \tilde{\varphi}\circ {f_2}^{-1}(y_1,y_2)=(y_1,B({f_2}^{-1}(y_1,y_2)))\]
	For lighter expressions, we define $\tilde{B}$ on $H$ by $\tilde{B}=B\circ{f_2}^{-1}$. As $\textrm{d}\tilde{\varphi}$ is of rank $k$ at most, $\textrm{d}(\tilde{\varphi}\circ {f_2}^{-1})$ is also at most of rank $k$ on $H$. For $(y_1,y_2)\in H$, the Jacobian matrix of $\tilde{\varphi}\circ {f_2}^{-1}$ at $(y_1,y_2)$ is under the form:
	\[\left(\begin{array}{cc}
	\mathrm{I}_{k}&0\\
	\frac{\partial \tilde{B}}{\partial y_1}(y_1,y_2)&\frac{\partial \tilde{B}}{\partial y_2} (y_1,y_2)\\
	\end{array}\right)\]
	As this matrix is of order $k$ at least, then necessarily $\frac{\partial \tilde{B}}{\partial y_2} (y_1,y_2)=0$. As $H$ is convex, we conclude that (on $H$) $\tilde{B}$ is independent of the second variable and so is $\tilde{\varphi}\circ {f_2}^{-1}$.
	Note that $B(0,\frac{\alpha}{2M_2})\subseteq \pi_{\mathbb{R}^k}(H)$ as for $a\in B(0,\frac{\alpha}{2M_2})$, we have $(a,0)\in H$. Following this remark, define $F$ on $B(0,\frac{\alpha}{2M_2})$ by $F(a)=\tilde{B}(a,0)$. We have then, for all $(y_1,y_2)\in H$:
	\[\tilde{\varphi}\circ {f_2}^{-1}(y_1,y_2)=(y_1,F(y_1))\]
	$F$ can be written as the composition $B\circ {f_2}^{-1} \circ i_{k,p}$, with $i_{k,p}(a)=(a,0)$ and is therefore, by theorem \ref{CompolipfuncCASE22}, Lip-$\gamma$ with norm bounded from above by a constant depending only on $\gamma$, $M_1$ and $M_2$. We define $U_2=H\cap B(0,\frac{\alpha}{2M_2})\oplus \mathbb{R}^{p-k}$, $\tilde{U}_1=f_2^{-1}(U_2)$, $U_0=f_1^{-1}(\tilde{U}_1)$ and $f=f_2\circ {f_1}_{\vert U_0}$. On the one hand, one easily sees that $f$ is Lip-$\gamma$ on $U_0$ with norm bounded from above by a constant depending only on $\gamma$, $M_1$ and $M_2$.On the other hand, $f^{-1}$ is almost Lip-$\gamma$ and $\|\mathrm{d} f^{-1}\|_{\textrm{Lip}-(\gamma-1)}=\|\mathrm{d} f_2^{-1}\|_{\textrm{Lip}-(\gamma-1)}$.
	
	We end this proof by introducing a final diffeomorphism. Define the open set:
	\[W_1=B\left(0,\frac{\alpha}{2M_2}\right)\oplus \mathbb{R}^{q-k} =\left\{(z_1,z_2)\in \mathbb{R}^q|z_1\in B\left(0,\frac{\alpha}{2M_2}\right)\right\}\]
	Let $g_2$ be the map defined on $W_1$ by:
	\[g_2(z_1,z_2)=(z_1,z_2-F(z_1))\]
	$\mathrm{d}g_2$ is clearly $\textrm{Lip}-(\gamma-1)$ and $g_2$ is even an almost $\textrm{Lip}-\gamma$ diffeomorphism. The Lip-$(\gamma-1)$ norms of $\mathrm{d}g_2$  and $\mathrm{d}g_2^{-1}$ can be bounded from above by a constant depending only on $\gamma$, $M_1$ and $M_2$. For $(y_1,y_2)\in U_2$:
	\[g_2\circ\tilde{\varphi}\circ {f_2}^{-1}(y_1,y_2)=(y_1,0)\]
	Define $W=g_1^{-1}(W_1)$ and $g=g_2\circ {g_1}_{\vert W}$. Then $g$ is an almost $\textrm{Lip}-\gamma$ diffeomorphism and the Lip-$(\gamma-1)$ norms of $\mathrm{d}g$  and $\mathrm{d}g^{-1}$ can be bounded from above by a constant depending only on $\gamma$, $M_1$ and $M_2$.

	The previous argument simplifies in the case $k=p$. In this case, $U_2= B(0,\frac{\alpha}{2M_2})$. Then $g\circ\varphi\circ f^{-1}=i_{p,q}$, where 
	\[i_{p,q}:x\in \mathbb{R}^p \longmapsto (x,0) \in \mathbb{R}^p\oplus\mathbb{R}^{q-p}\]
	Let $\hat{\varphi}$ be the map $f^{-1}\circ\pi_{p,q}\circ g$ defined on $W$, where 
	\[\pi_{p,q}:(x,y)\in\mathbb{R}^p\oplus\mathbb{R}^{q-p}\longmapsto x\in \mathbb{R}^p\]
	is the projection on the first $p$ variables. Then $\hat{\varphi}$ is a local inverse of $\varphi$ on $U_0$ and a $\textrm{Lip}-\gamma$ map on $\varphi(U_0)$. Writing $\hat{\varphi}$ as the composition of the two maps $f^{-1}$ and $\pi_{p}\circ g$, one gets the control 	\[\|\mathrm{d}\hat{\varphi}\|_{\textrm{Lip}-(\gamma-1)}\leq C_{\gamma}\|\mathrm{d}f^{-1}\|_{\textrm{Lip}-(\gamma-1)}\]
	where $C_{\gamma}$ is a constant depending on $\gamma$. Taking $\beta= \frac{\alpha}{3\max(1,M_1M_2)}$, then we have:
	\[f_2(B(0,\beta)) \subseteq \mathrm{d}f_2(0)\left(B\left(0,\frac{\alpha}{2\max(1,M_1M_2)}\right)\right)
	\subseteq U_2\]
	which gives the required quantification of the neighborhood of $x_0$.
	\end{proof}

	
\end{document}